\newtheorem{theorem}{Theorem}
\newtheorem{prop}[theorem]{Proposition}
\newtheorem{obs}[theorem]{Observation}
\newcommand{\ph}{\prec_P}
\newcommand{\pl}{\prec_L}
\newcommand{\nid}{\noindent}
\newcommand{\R}{\mathcal{R}}
\tikzstyle{wall}=[line width=0.6pt]
\newcommand{\td}
{%
\texorpdfstring
{\begin{tikzpicture}[baseline=0.0mm,scale=0.10]
\protect\draw[wall] (0,1.7)--(2,1.7);
\protect\draw[wall] (1,0)--(1,1.7);
\end{tikzpicture}}%
{<tr>}%
}
\newcommand{\tu}
{%
\texorpdfstring
{\begin{tikzpicture}[baseline=0.0mm,scale=0.10]
\protect\draw[wall] (0,0)--(2,0);
\protect\draw[wall] (1,0)--(1,1.7);
\end{tikzpicture}}%
{<tr>}%
}
\newcommand{\tl}
{%
\texorpdfstring
{\begin{tikzpicture}[baseline=0.0mm,scale=0.10]
\protect\draw[wall] (1.7,0)--(1.7,2);
\protect\draw[wall] (0,1)--(1.7,1);
\end{tikzpicture}}%
{<tr>}%
}
\newcommand{\tr}
{%
\texorpdfstring
{\begin{tikzpicture}[baseline=0.0mm,scale=0.10]
\protect\draw[wall] (0,0)--(0,2);
\protect\draw[wall] (0,1)--(1.7,1);
\end{tikzpicture}}%
{<tr>}%
}
\newcommand{\tdt}
{%
\texorpdfstring
{\begin{tikzpicture}[baseline=0.0mm,scale=0.18]
\protect\draw[wall] (0,1.7)--(2,1.7);
\protect\draw[wall] (1,0)--(1,1.7);
\end{tikzpicture}}%
{<tr>}%
}
\newcommand{\tut}
{%
\texorpdfstring
{\begin{tikzpicture}[baseline=0.0mm,scale=0.18]
\protect\draw[wall] (0,0)--(2,0);
\protect\draw[wall] (1,0)--(1,1.7);
\end{tikzpicture}}%
{<tr>}%
}
\newcommand{\tlt}
{%
\texorpdfstring
{\begin{tikzpicture}[baseline=0.0mm,scale=0.18]
\protect\draw[wall] (1.7,0)--(1.7,2);
\protect\draw[wall] (0,1)--(1.7,1);
\end{tikzpicture}}%
{<tr>}%
}
\newcommand{\trt}
{%
\texorpdfstring
{\begin{tikzpicture}[baseline=0.0mm,scale=0.18]
\protect\draw[wall] (0,0)--(0,2);
\protect\draw[wall] (0,1)--(1.7,1);
\end{tikzpicture}}%
{<tr>}%
}
\newcommand{\crs}{
\begin{tikzpicture}[baseline=0.1mm,scale=0.12]
\protect\draw[wall] (0,1)--(2,1);
\protect\draw[wall] (1,0)--(1,2);
\end{tikzpicture}
}
\newcommand{\wma}{
\begin{tikzpicture}[baseline=0.6mm,scale=0.1]
\protect\draw[wall] (1.1,0)--(1.1,2.5);
\protect\draw[wall] (3.6,1.1)--(1.1,1.1);
\protect\draw[wall] (2.5,3.6)--(2.5,1.1);
\protect\draw[wall] (0,2.5)--(2.5,2.5);
\end{tikzpicture}
}
\newcommand{\wmb}{
\begin{tikzpicture}[baseline=0.6mm,scale=0.1]
\protect\draw[wall] (2.5,0)--(2.5,2.5);
\protect\draw[wall] (3.6,2.5)--(1.1,2.5);
\protect\draw[wall] (1.1,3.6)--(1.1,1.1);
\protect\draw[wall] (0,1.1)--(2.5,1.1);
\end{tikzpicture}
}
\newcommand{\zwall}{
\begin{tikzpicture}[baseline=0.6mm,scale=0.1]
\protect\draw[wall] (1.8,0)--(1.8,3.6);
\protect\draw[wall] (0.4,2.5)--(1.8,2.5);
\protect\draw[wall] (1.8,1.1)--(3.2,1.1);
\end{tikzpicture}
}
\newcommand{\swall}{
\begin{tikzpicture}[baseline=0.6mm,scale=0.1]
\protect\draw[wall] (0,1.8)--(3.6,1.8);
\protect\draw[wall] (2.5,1.8)--(2.5,3.2);
\protect\draw[wall] (1.1,0.4)--(1.1,1.8);
\end{tikzpicture}
}
\newcommand{\izwall}{
\begin{tikzpicture}[baseline=0.6mm,scale=0.1]
\protect\draw[wall] (1.8,0)--(1.8,3.6);
\protect\draw[wall] (1.8,2.5)--(3.2,2.5);
\protect\draw[wall] (0.4,1.1)--(1.8,1.1);
\end{tikzpicture}
}
\newcommand{\iswall}{
\begin{tikzpicture}[baseline=0.6mm,scale=0.1]
\protect\draw[wall] (0,1.8)--(3.6,1.8);
\protect\draw[wall] (2.5,0.4)--(2.5,1.8);
\protect\draw[wall] (1.1,1.8)--(1.1,3.2);
\end{tikzpicture}
}
\author[Andrei Asinowski and Michaela A. Polley]{Andrei Asinowski\affiliationmark{1}\thanks{The research of Andrei Asinowski was funded by the Austrian Science Fund (FWF) [\href{https://doi.org/10.55776/P32731}{10.55776/P32731}].}
  \and Michaela A. Polley\affiliationmark{1,2}\thanks{The~research of Michaela A.~Polley was funded by a Fulbright--Austrian Marshall Plan Foundation Award for Research in Science and Technology. \\ For open access purposes, the authors have applied a CC BY public copyright license to any author-accepted manuscript version arising from this submission.}}
\title[Patterns in rectangulations: $\td$-like patterns]{Patterns in rectangulations. Part I: \\
$\tdt$-like patterns, inversion sequence classes $I(010, 101, 120, 201)$  
and $I(011, 201)$, and rushed Dyck paths}
\affiliation{
  % one line per affiliation, no postal codes, grant numbers or similar
  University of Klagenfurt, Klagenfurt, Austria\\
  Dartmouth College, Hanover, NH, USA}
\keywords{Rectangulations, inversion sequences, Dyck paths, pattern avoidance}
\begin{document}
% This is only used if you are compiling for a volume before vol 25
% \publicationdetails{VOL}{2015}{ISS}{NUM}{SUBM}
% This is the new form of collecting the data, starting with vol 25
\publicationdata{vol. 27:1, Permutation Patterns 2024}{2025}{17}{10.46298/dmtcs.15118}{2025-01-22; 2025-01-22; 2025-09-03}{2025-09-04}
\maketitle
\begin{abstract}
We initiate a systematic study of \textit{pattern avoidance in rectangulations}.
We give a formal definition of such patterns and investigate rectangulations that avoid $\td$-like patterns ---
the pattern $\td$ and its rotations. For~every $L \subseteq \{\td, \, \tl, \,  \tu,  \, \tr \}$ 
we enumerate $L$-avoiding rectangulations, both weak and strong.
In particular, we show $\td$-avoiding \textit{weak} rectangulations are enumerated by Catalan numbers
and construct bijections to several Catalan structures.
Then,~we prove that $\td$-avoiding \textit{strong} rectangulations are in bijection
with several classes of inversion sequences, among them $I(010,101,120,201)$ and $I(011,201)$ ---
which leads to a~solution of the conjecture that these classes are Wilf-equivalent.
Finally, we show that $\{\td, \tu\}$-avoiding strong rectangulations are in bijection
with recently introduced rushed Dyck paths.
\end{abstract}

%SEC 1 INTRO
\section{Introduction} 
A rectangulation $\mathcal{R}$ is a decomposition 
of a rectangle $R$ into a finite number of rectangles. 
Rectangulations appear naturally in integrated circuit design~\cite{LaPotinDirector1986, Lengauer2012}, 
combinatorial and geometric algorithms~\cite{Felsner2024,  KozmaSaranurak2016,Richter2022,WimerKorenCederbaum1988}, 
scientific data visualization~\cite{BuchinEppsteinLoefflerNoellenburgSilveira2016, KreveldSpeckmann2007}, 
architecture~\cite{Flemming1978,MitchellSteadmanLiggett1976,Steadman1983}, 
and also in visual art and design. 
They are of interest to combinatorialists due to their rich structure
and many links to other combinatorial objects. 
For~example, classes of rectangulations have been shown to be in bijection with 
permutation classes, binary trees, posets, Hopf algebras, planar maps,
among others~\cite{AckermanBarequetPinter2006bij,AsinowskiCardinalFelsnerFusy2024,
Felsner2013,FelsnerFusyNoyOrden2011,LawReading2012,Meehan2019bax,Meehan2019hopf,Reading2012}. 

Our work concerns pattern avoidance in rectangulations.
Rectangulation patterns were considered, explicitly or implicitly, in several earlier contributions
in the areas of combinatorics, computational geometry, and geometric algorithms.
A~rectangulation is \textit{guillotine} if and only if it avoids both \textit{windmills} $\wmb$ and $\wma$:
this early result was mentioned in many papers on algorithmic floorplaning;
to our knowledge, the first combinatorial paper where it appeared is~\cite{AckermanBarequetPinter2006bij} by Ackerman, Barequet, and Pinter.
Cardinal, Sacrist\'an, and Silveira~\cite{CardinalSacristanSilveira2018} showed that
a rectangulation is (strongly equivalent to) a diagonal one
if and only if it avoids the patterns $\izwall$ and $\swall$.
Generating functions for some families of pattern-avoiding rectangulations 
were found by Asinowski and Mansour~\cite{AsinowskiMansour2010}. 
Eppstein, Mumford, Speckmann, and Verbeek~\cite{EppsteinMumfordSpeckmannVerbeek2012} proved that a rectangulation 
is \textit{area-universal} is and only if every one of its segment is \textit{one-sided}: 
from the perspective of patterns, these are precisely 
$\{\izwall, \zwall, \swall, \iswall\}$-avoiding rectangulations.
More recently, pattern avoidance in rectangulations was considered by Merino and M\"utze~\cite{MerinoMuetze2023} in the context of algorithmic generation of combinatorial structures, where eight specific patterns were considered for demonstrating the developed algorithm.
Asinowski and Banderier~\cite{AsinowskiBanderier2024} provided an analytic solution for several models involving these patterns --- in particular, all models from~\cite{MerinoMuetze2023} that deal with weak guillotine rectangulations.
Finally, Asinowski, Cardinal, Felsner, and Fusy~\cite{AsinowskiCardinalFelsnerFusy2024} proved several results about correspondence 
of rectangulation patterns to permutation patterns --- in particular a characterization 
(conjectured by Merino and Mütze) of strong guillotine rectangulations by \textit{mesh patterns}.

Our goal is to initiate a~systematic treatment of this topic;
therefore, we start this contribution with giving a precise definition of pattern avoidance in rectangulations.
Then we explore the case of $\td$-like patterns --- that~is, the patterns
$\td$, $\tr$, $\tu$, and~$\tl$. 
For every possible subset $L$ of these patterns, we enumerate rectangulations that avoid~$L$, 
and we provide bijections with classes of structures, 
such as inversion sequences, permutations, and Dyck paths. 
Our main results include bijections between $\td$-avoiding strong rectangulations
and several classes of inversion sequences, among them $I(010,101,120,201)$ and $I(011,201)$.
At~the time of writing, the former of these classes is \textit{known} to be enumerated by the sequence \href{https://oeis.org/A279555}{A279555} 
of the \href{https://oeis.org/}{OEIS}~\cite{oeis}, 
and the latter is \textit{conjectured} to be enumerated by the same 
sequence~\cite{CallanMansour2023, YanLin2020}.
Our results include a proof of this conjecture, and, at the same time, 
offer the first combinatorial interpretation of \href{https://oeis.org/A279555}{A279555} other than as a class of inversion sequences.
Our further results include a bijection between $\{\td, \tu\}$-avoiding strong rectangulations
and a recently introduced family of rushed Dyck paths, enumerated by~\href{https://oeis.org/A287709}{A287709}. 
In summary, already the simplest cases of pattern avoidance in rectangulations
lead to interesting results and connections.

In the follow-up articles we study representation of rectangulation patterns by permutation patterns,
and explore the cases where avoided patterns consist of three segments. 
The ultimate goal of our study is to develop general approaches, similar
to the areas of pattern avoidance in permutations, posets, lattice paths, etc.
For instance, given a set of patterns $L$, 
we wish to know whether $L$-avoiding rectangulations are bijective to a~class of permutations or inversion sequences,
and whether the generating function is rational / algebraic / D-finite or not.
In earlier contributions such questions were handled ad hoc, we wish to develop a systematic approach.

The motivation behind investigating the $\td$-like patterns is two-fold.
These are the simplest patterns, and it is very natural to start the systematic study with exploring them
in order to see which phenomena can be observed when they determine a class of rectangulations
(for instance, the difference between the ``weak'' and the ``strong'' case).
On the other hand, since all the joints in generic rectangulations have a $\td$~shape,
any pattern can be regarded as composed of several $\td$-like patterns.
Hence, good understanding of their avoidance is an essential step towards 
a general theory.

\newpage

The paper is organized as follows:
In Section~\ref{sec:def} we provide the background and recall main definiti\-ons and basic results on rectangulations and inversion sequences.
In Section~\ref{sec:patt_avoid_rect} we give a precise \mbox{definition} of patterns in rectangulations.
Then we present our results, in particular:
in Section~\ref{sec:td_weak} we \mbox{provide} bijections between $\td$-avoiding 
weak rectangulations and several Catalan structures (Theorem~\ref{thm:td_weak});
in Sections \ref{sec:td_strong1} and~\ref{sec:td_strong2}, bijections 
between $\td$-avoiding strong rectangulations and avoidance classes of inversion sequences,
including $I(010, 101, 120, 201)$ and $I(011, 201)$
(Theorems~\ref{thm:strong1} and ~\ref{thm:strong2});
and, in Section~\ref{sec:td_tu_strong}, bijections 
between $\{\td, \tu\}$-avoiding strong rectangulations 
and rushed Dyck paths (Theorem~\ref{thm:rush}).

% SEC 2
\section{Definitions and basics}
\label{sec:def}
\subsection{Rectangulations}
\label{sec:def_rect}

In this expository section we recall definitions and basic results on rectangulations.
All the results are 
taken from~\cite{AckermanBarequetPinter2006bij, 
AckermanBarequetPinter2006num, AsinowskiCardinalFelsnerFusy2024,
CardinalSacristanSilveira2018, 
MerinoMuetze2023}.
In most cases we follow the conventions and the notation from~\cite{AsinowskiCardinalFelsnerFusy2024}. 

\paragraph{Rectangulations and segments.} \label{par:rectangulations_strong}
A rectangulation $\mathcal{R}$ of an axis-aligned rectangle $R$ is a decomposition of $R$ into finitely many interior-disjoint rectangles. 
The \textit{size} of $\mathcal{R}$ is the number of rectangles in the decomposition. 
See Figure~\ref{fig:equivalence} for several rectangulations of size $9$.

A \textit{segment} in $\mathcal{R}$ is a union of one or more sides of rectangles that form a straight line, 
is maximal in regards to this property, and is not one of the sides of $R$. 
We will always assume that the rectangulations are \textit{generic}, 
which means that segments do not cross each other; 
that is, two segments can meet in a joint of the shape 
$\td$, $\tu$, $\tr$, or $\tl$, but never $\crs$. 
Under this assumption, every rectangulation of size $n$ has precisely $n-1$ segments.
Given a segment $s$, 
a \textit{neighboring segment} of $s$ is a perpendicular segment one of whose endpoints lies on $s$,
and a~\textit{neighboring rectangle} of $s$ is a rectangle of $\R$ one of whose sides is included in~$s$.
Depending on the side from which a neighboring segment (or rectangle) meets $s$,
we call it a left, right, top, or bottom neighboring segment (or rectangle).
A segment is one-sided if it has neighboring segments on at most one of its sides.

\begin{figure}[htbp]
    \centering
    \includegraphics[width=\textwidth]{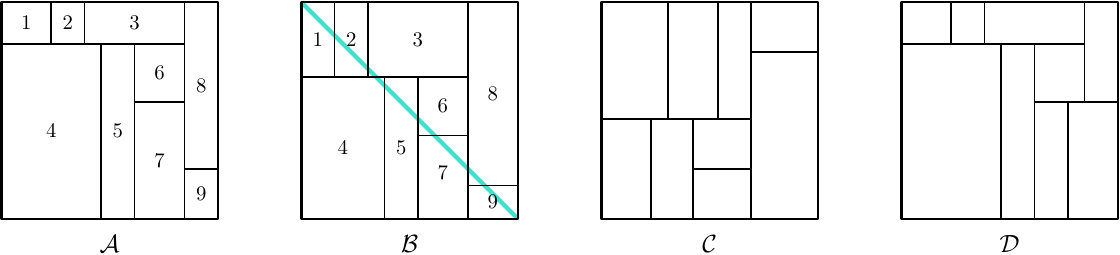}
    \caption{Four rectangulations of size $9$. 
    Rectangulations $\mathcal{A}$, $\mathcal{B}$, and $\mathcal{C}$ are weakly equivalent.
     Rectangulations $\mathcal{A}$ and~$\mathcal{B}$ are also strongly equivalent. Rectangulation $\mathcal{B}$ is diagonal. For rectangulations $\mathcal{A}$ and $\mathcal{B}$, the \textrm{NW}--\textrm{SE} labeling is shown.}
    \label{fig:equivalence}
\end{figure}

\paragraph{Weak and strong equivalence}\label{par:weak_strong}

There are two natural ways to define equivalence classes for rectangulations: 
the~\textit{weak equivalence} that preserves segment-to-rectangle contacts,
and the (finer) \textit{strong equivalence} that preserves rectangle-to-rectangle contacts.

A precise definition is based on \textit{neighborhood relations} between rectangles of $\R$.
A~rectangle~$Y$ is \textit{right} of a~rectangle $X$ (equivalently, $X$ is \textit{left} of $Y$)
if there exists a~sequence of rectangles $X=X_1, X_2, \ldots, X_k=Y$
such~that for every $i$, $1 \leq i \leq k-1$, there is a vertical segment $s_i$ 
such~that $X_i$ is a left neighbor of $s_i$ and $X_{i+1}$ is a~right neighbor of $s_i$.
A~rectangle~$Y$ is \textit{above} a rectangle $X$ (equivalently, $X$ is \textit{below} $Y$)
if there exists a~sequence of rectangles $X=X_1, X_2, \ldots, X_k=Y$
such that for every $i$, $1 \leq i \leq k-1$, there is a horizontal segment~$s_i$ 
such that~$X_i$ is a bottom neighbor of $s_i$ and $X_{i+1}$ is a top neighbor of $s_i$.
Every pair $X, \ Y$ of distinct rectangles of~$\R$ satisfies precisely one of these relations:
$X$ is either above, below, right of, or left of $Y$. 

Two rectangulations are \textit{weakly equivalent} if there is a (unique) bijection between their rectangles 
that preserves the left--right and above--below neighborhood relations.
Two rectangulations are \textit{strongly equivalent} if this bijection additionally preserves 
contacts between rectangles. 
Every weak rectangulation produces one or several strong rectangulations 
which differ from each other by \textit{shufflings} ---
the orders in which each segment meets the endpoints of all neighboring segments.

A \textit{weak rectangulation} is an equivalence class of rectangulations with respect to the weak equivalence, 
and a~\textit{strong rectangulation} is an equivalence class of rectangulations with respect to the strong equivalence. 
In~Figure~\ref{fig:equivalence}, the rectangulations $\mathcal{A}$, $\mathcal{B}$, and $\mathcal{C}$ are weakly equivalent, 
but only $\mathcal{A}$ and $\mathcal{B}$ are strongly equivalent. 
Hence, the four images in Figure~\ref{fig:equivalence} represent two distinct weak rectangulations but three distinct strong rectangulations.

\paragraph{Diagonal labelings}
We denote the sides of $R$ by the cardinal directions
$\textrm{N}$ (top), $\textrm{W}$ (left), $\textrm{S}$ (bottom), and $\textrm{E}$~(right), 
and, accordingly, the corners by $\textrm{NW}$, $\textrm{SW}$, $\textrm{SE}$, and $\textrm{NE}$.
A \textit{$\textrm{W}$-rectangle} is any rectangle that touches~$\textrm{W}$,
the \textit{$\textrm{NW}$-rectangle} is the rectangle that contains the $\textrm{NW}$-corner,
and similar for other sides and corners.
The~\textit{\textrm{NW}--\textrm{SE} ordering} of the rectangles of $\R$ is the order in which
$X \prec Y$ if and only if $X$ is left of or above~$Y$.
The \textit{\textrm{NW}--\textrm{SE} labeling} is the labeling of the rectangles by $1, 2, 3, \ldots, n$ 
according to the \textrm{NW}--\textrm{SE} ordering, 
see rectangulations $\mathcal{A}$ and $\mathcal{B}$ in Figure~\ref{fig:equivalence}.
In the \textrm{NW}--\textrm{SE} labeling, rectangle $Y$ is the direct successor of rectangle~$X$
if and only if there is a segment whose 
one endpoint is the \textrm{SE}-corner of~$X$ 
and the other endpoint is the \textrm{NW}-corner of $Y$.
One similarly defines \textrm{SW}--\textrm{NE}, \textrm{SE}--\textrm{NW}, and \textrm{NE}--\textrm{SW} orderings and labelings.

\paragraph{Diagonal rectangulations}
A \emph{diagonal rectangulation} is a rectangulation 
in which the \textrm{NW}--\textrm{SE} diagonal of~$R$ intersects the interior of every rectangle. 
In Figure~\ref{fig:equivalence}, 
$\mathcal{B}$ is a diagonal rectangulation.
Every rectangulation~$\mathcal{R}$ is weakly equivalent to a diagonal rectangulation. 
Moreover, all diagonal rectangulations weakly equivalent to~$\mathcal{R}$
lie in the same strong equivalence class, in which all the segments are shuffled so that
the configurations $\izwall$ and $\swall$ are avoided.
In a diagonal rectangulation, the \textrm{NW}--\textrm{SE} diagonal intersects the rectangles 
according to the \textrm{NW}--\textrm{SE} ordering.

\paragraph{Guillotine rectangulations}\label{par:guillotine}

A \textit{cut} in $\mathcal{R}$ is a segment whose endpoints lie on opposite sides of $R$. 
A rectangulation~$\mathcal{R}$ is \emph{guillotine} if it is either of size $1$, 
or contains a cut that splits it into two sub-rectangulations which are (recursively) both guillotine. 
In Figure~\ref{fig:equivalence}, the rectangulations $\mathcal{A}, \mathcal{B}, \mathcal{C}$ are guillotine, and $\mathcal{D}$ is non-guillotine.
A rectangulation is guillotine if and only if it avoids 
\textit{windmills} --- quadruples of segments of the shape
$\wma$ or $\wmb$.

\subsection{Inversion sequences}
\label{sec:def_invseq}

In this section we recall the basic definitions related to inversion sequences
and pattern avoidance in them.

An \emph{inversion sequence} of length $n$ is an integer sequence $e=(e_1,e_2,e_3,\dots,e_n)$ such that 
$0 \leq e_j \leq j-1$ holds for all $j\in [n]$. 
We denote the set of all inversion sequences of length $n$ by $I_n$. 
An inversion sequence of length $n$ can be plotted on the 
lower staircase of a $n\times n$ grid by labeling the columns from $1$~to~$n$ and the rows from $0$ to $n-1$, 
see Figure~\ref{fig:inv_seq} for an example. 

\begin{figure}[h]
	\centering
\includegraphics[scale=0.8]{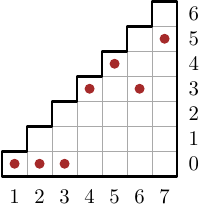}
	\caption{Plot of the inversion sequence $e=(0,0,0,3,4,3,5)$.}
	\label{fig:inv_seq}
\end{figure}

Given an inversion sequence $e$, we say that $e_j$ is \textit{high} if $e_j=j-1$. 
Further, the \textit{bounce}\footnote{This term was coined by Jay Pantone in~\cite{Pantone2024}.} of $e$ is 
$k:=n-M$, where $n$ is the length of $e$, and $M$ is its maximum value.
The inversion sequence $e=(0,0,0,3,4,3,5)$ from Figure~\ref{fig:inv_seq} has 
three high elements, three $0$ elements, and its bounce is $2$.
Its~left-to-right maxima are $0,3,4,5$, and its right-to-left minima are $0,3,5$.

There is a natural size-preserving bijection $\Theta$ from permutations to inversion sequences:
A permutation $\pi = \pi_1 \pi_2 \ldots \pi_n \in S_n$
is mapped to the inversion sequence $e=(e_1,e_2,e_3,\dots,e_n) \in I_n$,
where $e_k$ is the number of inversions whose second (smaller) element is~$\pi_k$,
that is, $e_k=|\{i\colon \, 1\leq i<k, \, \pi_i>\pi_k\}|$.
In~this case we say that~$e$ is the inversion sequence of $\pi$.
For example, $(0,0,0,3,4,3,5) \in I_7$ is the inversion sequence of $5673142 \in S_7$.

Similar to pattern avoidance in permutations 
(refer to~\cite{Bevan2015} for main definitions and basic results), 
one defines pattern avoidance in inversion sequences,
with the caveat that a pattern is not necessarily a valid inversion sequence itself,
but can be any sequence $f$ of non-negative integers with set of values $\{0, 1, 2, \ldots, \ell \}$
for some $\ell$.
For example, $(1,0)$ is not an inversion sequence, but it can be considered as a pattern.
An inversion sequence~$e$ \emph{contains} $f$ if $e$ has a~(not necessarily contiguous) subsequence whose entries have the same relative order as~$f$. Otherwise, $e$ \emph{avoids}~$f$. 
All the patterns in our paper are of the length at most $3$,
and we write them without parentheses and commas, for example $201$ rather then $(2, 0, 1)$.
For example, the inversion sequence from Figure~\ref{fig:inv_seq}
contains --- among others --- the patterns $001$, $010$, $021$, $102$, 
but avoids the patterns $100$, $101$, $120$,  $210$. 
Given a set of patterns $T$, an inversion sequence is \textit{$T$-avoiding}
if it avoids all the patterns from $T$.
We denote the set of $T$-avoiding inversion sequences by~$I(T)$, 
and the set of $T$-avoiding inversion sequences of length~$n$ by $I_n(T)$.
For two sets of patterns, $T$ and $T'$, 
we say that the classes $I(T)$ and $I(T')$ are \textit{Wilf-equivalent} 
if $|I_n(T)|=| I_n(T')|$ for every $n \geq 1$;
we denote this by $I(T) \sim I(T')$.
A systematic investigation of pattern avoidance in inversion sequences was started 
in~\cite{CorteelMartinezSavageWeselcouch2016,MansourShattuck2015, MartinezSavage2018},
and it is still a very active area of research.

%SEC 3
\section{Pattern avoidance in rectangulations}
\label{sec:patt_avoid_rect}

As mentioned above, we aim to developing a systematic study of patterns in rectangulations.
Therefore, in this section we give a formal definition of rectangulation patterns in terms of configurations of segments. 

\medskip

\noindent\textbf{Definition.} A \textit{configuration of segments} is a connected\footnote{
Considering disconnected patterns would inevitably involve configurations of both segments and rectangles
and lead to more involved definitions.
Yet in~\cite{AsinowskiBanderier2024, MerinoMuetze2023} some disconnected patterns were considered, rather \textit{ad hoc}.} set of horizontal and vertical segments 
such that two distinct segments never cross and never share an endpoint.
Two configurations of segments, $C$ and~$C'$, are 
\textit{weakly equivalent} if there is a (unique) bijection 
$\gamma\colon  s \mapsto s'$ between their segments such that 
\begin{enumerate}
\item Segments $s \in C$ and $s' \in C'$ have the same orientation (both are horizontal or both are vertical);
\item Segments $s$ and $t$ meet in $C$ if and only if segments $s'$ and $t'$ meet in $C'$, 
and the respective joints are of the same kind;
\item For every segment $s$, the order in which its neighboring segments meet it is preserved by $\gamma$, 
independently, for both sides of $s$. 
(More~precisely,  
if $s$~is vertical, and $t_1, t_2, \ldots, t_k$ are its left neighboring segments 
listed in the order in which $s$ meets them from the top to the bottom,
then $t'_1, t'_2, \ldots, t'_k$ are precisely the left neighboring segments of $s'$ listed in the order in which $s'$ meets them from the top to the bottom; similarly for the right neighboring segments of $s$; and similarly for a horizontal~$s$.)
\end{enumerate}
Finally, $C$ and $C'$ are \textit{strongly equivalent} if 
they are weakly equivalent and 
the bijection $\gamma$ also satisfies
\begin{enumerate}
\setcounter{enumi}{3}
\item For every segment $s$, the order in which \textit{all} the neighbors of $s$ meet it, is preserved by $\gamma$.
(More precisely, 
if $s$~is vertical, and $t_1, t_2, \ldots, t_k$ are precisely the neighboring segments of $s$ listed in the order in which $s$~meets them from the top to the bottom,
then $t'_1, t'_2, \ldots, t'_k$ are precisely the neighboring segments of $s'$ listed in the order in which $s'$ meets them from the top to the bottom; 
similarly for a horizontal $s$.)
\end{enumerate}

\medskip

\noindent\textbf{Definition.} A \textit{weak rectangulation pattern} is an equivalence class of configurations of segments with respect to the weak equivalence.
A \textit{strong rectangulation pattern} is an equivalence class of configuration of segments with respect to the strong equivalence.

A rectangulation $\mathcal{R}$ \textit{contains} a pattern $p$ if there is an injection 
from the segments of (any representative of) $p$ into
the segments of (any representative of) $\mathcal{R}$ which preserves incidences, orientations, and order of incidence of neighbors of the segments of $p$.
Otherwise, $\mathcal{R}$ \textit{avoids} $p$.

\medskip

Here, the order of incidence should be understood according to the kind of equivalence. 
For weak rectangulations we consider weak patterns, 
and for strong rectangulations we consider 
strong patterns\footnote{Avoidance of strong patterns in weak rectangulations is in general not well defined, and avoidance of weak patterns in strong rectangulations is equivalent to avoidance of several strong patterns.}.

The next figures illustrate these concepts and present some phenomena. 
In Figure~\ref{fig:ex_ws}, all three segment configurations are weakly equivalent, but only (a) and (b) are strongly equivalent.
Accordingly, (a) and~(b) represent the same strong pattern, and (c) a different strong pattern;
but all three configurations represent the same weak pattern.
The colors are used in order to make the correspondence visually clear.

\begin{figure}[h]
\centering
\includegraphics[scale=0.95]{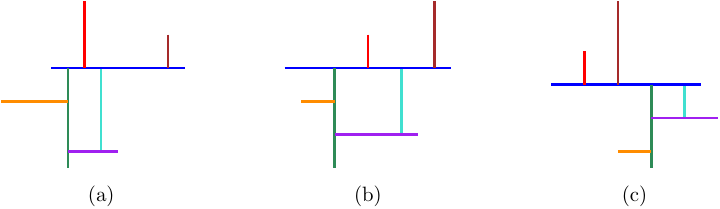} 
\caption{Images (a) and (b) represent the same strong pattern, and (c) a different strong pattern.
All three images represent the same weak pattern.}
\label{fig:ex_ws}
\end{figure}

Since patterns involve only segments, 
there is a greater flexibility in drawing their representatives than for rectangulations.
For example, images (a) and (b) in Figure~\ref{fig:ex} represent the same pattern.
(In this case, all the segments are one-sided, 
therefore this pattern can be seen as a weak or as a strong pattern.)
Moreover, an~occurrence of a pattern $p$ in a rectangulation preserves 
all the contacts that exist in $p$, but can also contain
some additional contacts: this is also illustrated in Figure~\ref{fig:ex}(c).

\begin{figure}[h]
\centering
\includegraphics[scale=0.95]{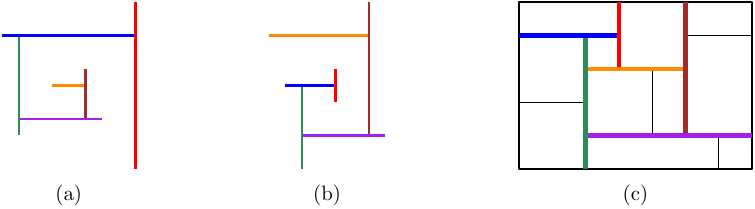} 
\caption{Two drawings of the same pattern (a,b), and its occurrence in a rectangulation (c).}
\label{fig:ex}
\end{figure}

Let $L$ be a set of rectangulation patterns.
Denote by $R^w(L)$ and $R^s(L)$ the set of weak and, respectively, strong rectangulations  that avoid all the patterns from $L$. We refer to these sets as \emph{rectangulation classes}.
Note~that if $L$ contains at least one pattern from $\{\td,\tu\}$ and at least one pattern from $\{\tr,\tl\}$,
then all the segments in the rectangulations of the class are one-sided,
and hence, the distinction between $R^w(L)$ and $R^s(L)$ is not essential:
in this case we simply write $R(L)$.
To refer to $L$-avoiding rectangulations of a fixed size $n$, 
we add a subscript and denote the respective sets by $R^w_n(L)$, $R^s_n(L)$, and $R_n(L)$.

% SEC 4
\section{$L$-avoiding rectangulations for $L \subseteq \{\tdt, \trt, \tut, \tlt \}$ }\label{sec:main}
In this section, we investigate the basic case of \textit{$\td$-like patterns} --- that is, the pattern $\td$ and its rotations $\tl, \tu, \tr$.
A summary of our main enumerative results is given in Table~\ref{tab:results}. 
There are five essentially different combinations~$L$~of $\td$-like patterns.
In Section~\ref{sec:td} we explore $\td$-avoiding rectangulations, 
and in Section~\ref{sec:td_tu}, $\{\td,\tu\}-$avoiding rectangulations.
Only in these two cases is distinction between  
weak and strong rectangulations (indicated by \textbf{w} and \textbf{s} in the table) essential. 
The last three cases are elementary, we give them in Section~\ref{sec:elementary}.

\renewcommand{\arraystretch}{1.3}
\begin{table}[h]
\begin{center}
\begin{tabular}{|c|c|c|c|}
\hline
$L$ & \textbf{w}/\textbf{s} & Formula/OEIS & Bijection to\ldots  \\ \hline 
\multirow{2}{3em}{\ $\{\td\}$} &\textbf{w}& \href{https://oeis.org/A000108}{A000108} (Catalan numbers)&
\begin{tabular}{c}
Binary trees, Dyck paths, \\  $(10)$-avoiding inversion sequences, \\ $(213)$-avoiding permutations
\end{tabular}
\\ \cline{2-4}
&\textbf{s}&\href{https://oeis.org/A279555}{A279555}& 
\begin{tabular}{c}
$I(010, 101, 120, 201)$, $I(010,110,120,210)$  \\
$I(010,100,120,210)$, and $I(011, 201)$.
\end{tabular}
\\ \hline 
\multirow{2}{3em}{$\{\td, \tu \}$} &\textbf{w}&$2^{n-1}$&Compositions  \\ \cline{2-4} &\textbf{s}&\href{https://oeis.org/A287709}{A287709}& Rushed Dyck paths  \\ \hline 
 \multicolumn{2}{|l|}{$\{\td, \tr \}$} &$2^{n-1}$& Binary sequences  \\ \hline 
\multicolumn{2}{|l|}{$\{\td, \tr, \tu \}$} &$n$&  \\ \hline 
\multicolumn{2}{|l|}{$\{\td, \tr, \tu, \tl \}$} &$2$&  \\ \hline 
\end{tabular}
\end{center}
\caption{Summary of main results on enumeration of $L$-avoiding rectangulations for  
$L \subseteq \{\td, \tr, \tu, \tl \}$.}
\label{tab:results}
\end{table}
\renewcommand{\arraystretch}{1}

\subsection{Enumeration and bijections for $R(\td)$}
\label{sec:td}
We start with the most basic case: $L = \{\td\}$. 
Note that $\td$-avoiding rectangulations are precisely those in which every vertical segment reaches $\mathrm{N}$.
We explore $\td$-avoiding weak rectangulations in Section~\ref{sec:td_weak}, and then $\td$-avoiding strong rectangulations in Sections~\ref{sec:td_strong1}--\ref{sec:td_strong2}.
In Section~\ref{sec:555} we provide a background on
$I(010,101,120,201)$ and $I(011,201)$ (and some other related classes of inversion sequences),
and in Section~\ref{sec:conj} we use our results to prove Wilf-equivalence of these two classes,
along with several matching statistics.

\subsubsection{$\td$-avoiding weak rectangulations: bijections with Catalan structures}
\label{sec:td_weak}
Enumeration of $R^w(\td)$ leads to a famous sequence:
\begin{theorem}
	$|R^w_n(\td)| = C_n$, the $n$-th Catalan number.
	\label{thm:td_weak}
\end{theorem}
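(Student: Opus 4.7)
My plan is to prove $|R^w_n(\td)| = C_n$ via a recursive decomposition that yields the standard Catalan convolution; this construction then readily transports into the bijections with Catalan-class structures recorded in Table~\ref{tab:results}.

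\textbf{Step 1 (structural reduction).} In any generic rectangulation, the top endpoint of a vertical segment lies either on $\mathrm{N}$ or on an interior horizontal segment --- the second possibility produces the forbidden $\td$ joint. Thus $\td$-avoidance is equivalent to: every vertical segment has its top endpoint on $\mathrm{N}$. A direct consequence is that no windmill $\wma$ or $\wmb$ can occur, because any windmill forces some vertical segment to end on an interior horizontal, producing $\td$. Hence every $\td$-avoiding weak rectangulation is guillotine, and a recursive cut structure is available.

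\textbf{Step 2 (decomposition via the NW-rectangle).} For $\mathcal{R} \in R^w_n(\td)$ with $n \geq 2$, examine the $\mathrm{NW}$-rectangle $X_1$. By Step~1, its right side is either $\mathrm{E}$ or a vertical segment reaching $\mathrm{N}$, and its bottom is either $\mathrm{S}$ or a horizontal segment. Three scenarios arise:
\begin{itemize}
\item[(i)] $X_1$ is the top strip (right on $\mathrm{E}$, bottom a full horizontal cut): the part below is an arbitrary $\td$-avoiding weak rectangulation of size $n-1$;
\item[(ii)] $X_1$ is the left strip (bottom on $\mathrm{S}$, right a full vertical cut): the part to the right is an arbitrary $\td$-avoiding weak rectangulation of size $n-1$;
\item[(iii)] the right side of $X_1$ is a vertical $v$ not reaching $\mathrm{S}$, and the bottom side is a horizontal $h$ not reaching $\mathrm{E}$, meeting at the $\mathrm{SE}$-corner of $X_1$ in either a $\tu$ or a $\tl$ joint. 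In either sub-case $R \setminus X_1$ splits canonically into two smaller $\td$-avoiding weak rectangulations whose sizes sum to $n-1$.
\end{itemize}
Summing over all cases (with $a_0 := 1$ absorbing the ``empty'' part in (i) and (ii)) yields the Catalan recurrence
\[
a_n \;=\; \sum_{k=0}^{n-1} a_k\, a_{n-1-k},
\]
which, together with $a_1 = 1$, forces $a_n = C_n$.

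\textbf{Step 3 (bijective interpretation).} The same recursion is a recipe for a bijection with any Catalan structure; a natural target is $I_n(10)$, the class of weakly increasing inversion sequences. Labeling the rectangles $1, 2, \dots, n$ in $\mathrm{NW}$--$\mathrm{SE}$ order, let $e_i$ record the recursion-depth at which rectangle $i$ is produced by the decomposition of Step~2. The NW--SE labeling is precisely the depth-first traversal of the recursion, so $(e_1, \dots, e_n)$ is weakly increasing and satisfies $0 \le e_i \le i-1$; the inverse is obtained by reading the recursion back from the sequence.

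\textbf{Main obstacle.} The technical crux is case~(iii): one must verify that the $\tu$/$\tl$ joint at the $\mathrm{SE}$-corner of $X_1$ genuinely partitions $R \setminus X_1$ into two independent $\td$-avoiding sub-rectangulations, and that this partition matches the Catalan convolution exactly once (avoiding any over- or under-count with cases (i) and (ii)). Given the four possible shapes of $X_1$'s local neighborhood, careful bookkeeping is needed to align them with the binary split encoded in the Catalan recurrence.
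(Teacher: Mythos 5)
Your Step 1 is sound (and matches the paper's observation that both windmills contain $\td$, so every $\td$-avoiding rectangulation is guillotine), but the decomposition in Step 2 fails, and not only in case (iii). In case (i), once $X_1$ is a full-width top strip, its bottom side is a horizontal cut $s$ running from $\mathrm{W}$ to $\mathrm{E}$; any vertical segment lying below $s$ has its top endpoint on $s$ or on a lower horizontal segment, which is precisely a $\td$ joint. So the part below $s$ admits no vertical segments at all and is forced to be a stack of $n-1$ horizontal strips: case (i) contributes exactly $1$, not $a_{n-1}$. (Already at $n=3$ this matters: of the six guillotine weak rectangulations, exactly one is excluded by $\td$, giving $5=C_3$, and only one of the five has a full-width $\mathrm{NW}$-rectangle, whereas your count would assign $a_2=2$ to that case.) The obstacle you flag in case (iii) is fatal rather than a matter of bookkeeping: $R\setminus X_1$ is an L-shaped region, the segments $v$ and $h$ need not extend to cuts, and in the $\tl$ sub-case $v$ may terminate on a horizontal segment that extends to both sides of $v$, so the two ``parts'' are neither rectangles nor independent. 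The case split is also not exhaustive as written (for instance, the right side of $X_1$ may be a full vertical cut while the bottom of $X_1$ is an interior horizontal segment, which is none of (i)--(iii)). Consequently the convolution $a_n=\sum_k a_k a_{n-1-k}$ is not established, and Step 3, which is only sketched and rests on this recursion, collapses with it.

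For comparison, the paper's generating-function proof sidesteps exactly these traps by decomposing guillotine-style rather than at the $\mathrm{NW}$-rectangle: a horizontal rectangulation is split at its \emph{lowest} horizontal cut, below which only a single undivided rectangle can sit (for the reason above), and a vertical one at its \emph{leftmost} vertical cut, to the left of which only a single rectangle or a horizontal rectangulation can sit; this yields the system $R=x+H+V$, $H=xR$, $V=(x+H)R$ rather than a one-step convolution. If you want a genuine Catalan convolution and a clean bijection to $I_n(10)$, the paper's binary-tree and Dyck-path constructions are the right vehicle: decompose along the first return of the Dyck path $\delta(\R)$, or equivalently at the root of the lower binary tree of the diagonal drawing, rather than at the $\mathrm{NW}$-rectangle.
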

This result, as well as its proof via binary trees (Proof~2 below),
was first found by Aaron Williams~\cite{Williams}.
We~provide several proofs:
first we derive the generating function for $R^w_n(\td)$,
and then we construct bijections between $R^w_n(\td)$ and several standard Catalan structures, 
such as binary trees, Dyck paths, $(213)$-avoiding permutations, 
and $(10)$-avoiding inversion sequences. 
These proofs emphasize different aspects of the result:
bijections to binary trees and Dyck paths demonstrate that 
$\td$-avoiding weak rectangulations is a very natural Catalan structure;
the bijection with $(213)$-avoiding permutations shows how 
adding the pattern $\td$ restricts the well-known bijection between
weak rectangulations and Baxter permutations;
finally, the bijection $\tau$ with $(10)$-avoiding inversion sequences 
is used in Section~\ref{sec:td_strong1} as a basis for 
bijections between $\td$-avoiding \textit{strong} rectangulations 
and several classes of pattern-avoiding inversion sequences.

\medskip
\nid \textbf{Proof 1: Generating functions.}
In this proof we consider a structural decomposition of $\td$-avoiding rectangulations,
and translate it into an equation satisfied by the generating function. 
It is an adaptation of the proof of the claim that 
(all) weak rectangulations are enumerated by Schröder numbers 
(see~\cite[Theorem~2]{AckermanBarequetPinter2006num} and~\cite[Proposition~2.4]{AsinowskiCardinalFelsnerFusy2024}).

Let $\mathcal{R} \in R^w(\td)$. 
Note that $\mathcal{R}$ is necessarily guillotine, since both windmills contain $\td$.
Every guillotine rectangulation of size $>1$ has either horizontal or vertical cut(s)
and is called accordingly horizontal or vertical.
Let $R(x)$, $H(x)$, and $V(x)$ be the generating functions (with respect to the size) 
for all, horizontal, and vertical rectangulations in $R^w(\td)$, respectively. 
Then we have
\begin{equation}
R(x) = x + H(x) + V(x).
\label{eq:R}
\end{equation}

\begin{figure}[h]
	\centering
	\includegraphics[scale=0.9]{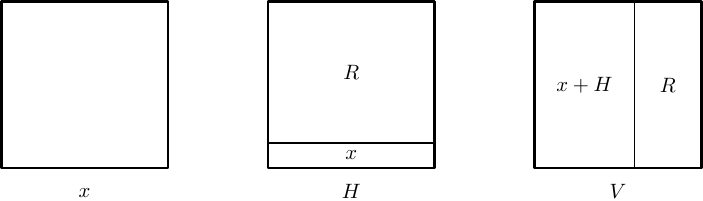}
	\caption{The structural decomposition of an element of $R^w(\td)$.}
	\label{fig:guillotine_decomp}
\end{figure}

Consider the structural decomposition of horizontal and vertical rectangulations in $R^w(\td)$ (see Figure~\ref{fig:guillotine_decomp}). 
Given a horizontal rectangulation in $R^w(\td)$, let $s$ be its lowest cut. 
Above $s$ there can be any $\td$-avoiding rectangulation;
below $s$ there can be a only a single non-partitioned rectangle.
Hence, we have
\begin{equation}
H(x) = x \, R(x).
\label{eq:H}
\end{equation}

Given a vertical rectangulation in $R^w(\td)$, let $t$ be its left-most cut. 
To the left of $t$ there can be either a~rectangulation of size $1$ or any $\td$-avoiding horizontal rectangulation; to the right of $t$ there can be any $\td$-avoiding rectangulation. 
Hence, we have
\begin{equation}
V(x) = \big(x+H(x)\big) \, R(x).
\label{eq:V}
\end{equation}

Solving \eqref{eq:R}, \eqref{eq:H}, and \eqref{eq:V} for $R(x)$, we obtain
\[xR^2(x)+(2x-1)R(x)+x=0,\]
which yields
\[R(x) = \frac{1-\sqrt{1-4x}}{2x}-1,\]
the generating function of Catalan numbers (starting at $C_1$).\hfill$\qed$

\medskip
\nid \textbf{Proof 2: Bijection with binary trees and staircases.}
As mentioned above, this proof was first given by Aaron Williams~\cite{Williams}.
It employs the fact that every diagonal rectangulation can be decomposed into
two binary trees by cutting along the diagonal: see Figure~\ref{fig:trees}(a),
where the \textit{lower tree} is red and the \textit{upper tree} is blue.
More precisely, there is a bijection between weak rectangulations 
and \textit{twin-binary trees} --- pairs of binary trees that satisfy a~condition to match the 
orientations of interior leaves~\cite{YaoChenChengGraham2003}. Below we show that \textit{$\td$-avoiding} weak rectangulations
are in bijection with \textit{individual} binary trees.

\begin{figure}[h]
\begin{center}
\includegraphics[scale=0.75]{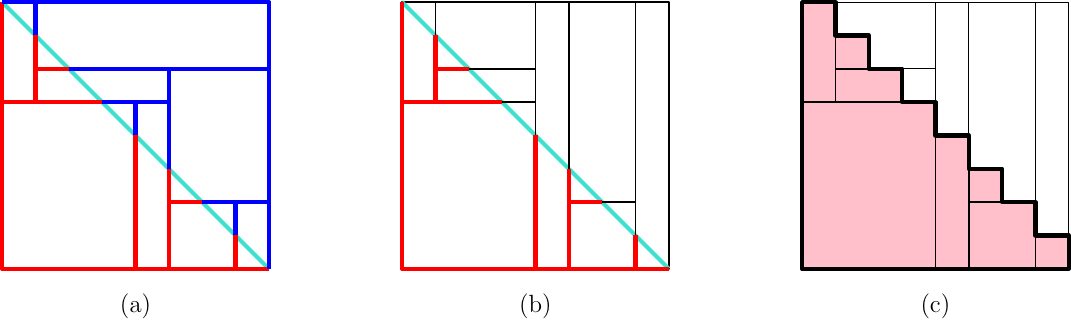} 
\end{center}
\caption{Illustration to the second proof of Theorem~\ref{thm:td_weak}.
(a) A diagonal rectangulation is uniquely determined by a pair of binary trees.
(b) A $\td$-avoiding diagonal rectangulation is uniquely determined by a single binary tree.
(c)~A~$\td$-avoiding diagonal rectangulation is uniquely determined by a Catalan staircase (pink).}
\label{fig:trees}
\end{figure}

Given a $\td$-avoiding weak rectangulation $\R$, consider its diagonal drawing.
Since $\td$ is avoided, all the joints that lie above the diagonal are of the shape $\tl$. 
Therefore, the upper binary tree --- and hence, the entire rectangulation --- 
is entirely determined by the lower binary tree.
This leads to the bijection which maps~$\mathcal{R}$ to the lower binary tree obtained in this way.
For a binary tree $\mathcal{T}$, the corresponding $\td$-avoiding diagonal rectangulation can be constructed as follows (see Figure~\ref{fig:trees}(b)):
\begin{enumerate}
	\item Embed $\mathcal{T}$ in a rectangle $R$ so that its root is in the \textrm{SW}-corner,
	all the edges are horizontal or vertical and non-crossing,
	 and all the leaves lie on the \textrm{NW}--\textrm{SE} diagonal.
	\item Extend every vertical edge upwards until it meets \textrm{N} --- the top side of $R$. 
	\item Extend every horizontal branch rightwards until it meets a vertical segment or \textrm{E} --- the right side of $R$.
\end{enumerate}
Since binary trees are enumerated by Catalan numbers~\cite[Chapter 2, item~5]{Stanley2015}, the result follows.

In fact, a minor modification of the subdiagonal part of $\td$-avoiding rectangulations
recovers \textit{Catalan staircases}. They are defined 
in~\cite[Chapter 2, item~205]{Stanley2015} as ``tilings of the staircase shape [with $n$ steps] by $n$ rectangles'',
and they are clearly equivalent to binary trees: see Figure~\ref{fig:trees}(c).\hfill$\qed$

\medskip
\nid \textbf{Proof 3: Bijection with non-decreasing inversion sequences and Dyck paths.}
We establish a size-preserving bijection $\tau$ from $\td$-avoiding weak rectangulations
to non-decreasing inversion sequences, which then directly yields a~bijection with Dyck paths. 
Note that the family of non-decreasing inversion sequences is precisely $I(10)$.
Every~non-decreasing inversion sequence $e$ consists of \textit{plateaus} --- 
maximal consecutive subsequences with the same value. 
The first elements of the plateaus are precisely the left-to-right maxima of $e$.
Consecutive plateaus are separated by a \textit{jump} --- a pair of consecutive elements $e_i, e_{i+1}$ 
with $e_i < e_{i+1}$; the \textit{height} of such a jump is $e_{i+1}-e_i$.

Let $\mathcal{R} \in R^w(\td)$. 
We construct a non-decreasing inversion sequence $\tau(\mathcal{R})$ as follows:
\begin{enumerate}
\item For each rectangle $X$ of $\mathcal{R}$, 
label it with the number of rectangles to the left of $X$. 
This labeling will be called the \textit{L-labeling}.
\item Consider also the \textrm{SW}--\textrm{NE} ordering of the rectangles of $\R$.
\item We define $\tau(\R)$ to be the sequence $e$ obtained by taking the rectangles according to the \textrm{SW}--\textrm{NE} ordering and reading their L-labels.
\end{enumerate}
These steps are demonstrated in Figure~\ref{fig:tau_illustration}(a,b,c).
For the rectangulation $\R$ in this example
we obtain $\tau(\R) = (0,0,1,1,1,4,4,4,4,4,8,8,12,12,12,12,12,12)$.

\begin{figure}[h]
	\centering
	\includegraphics[width=\textwidth]{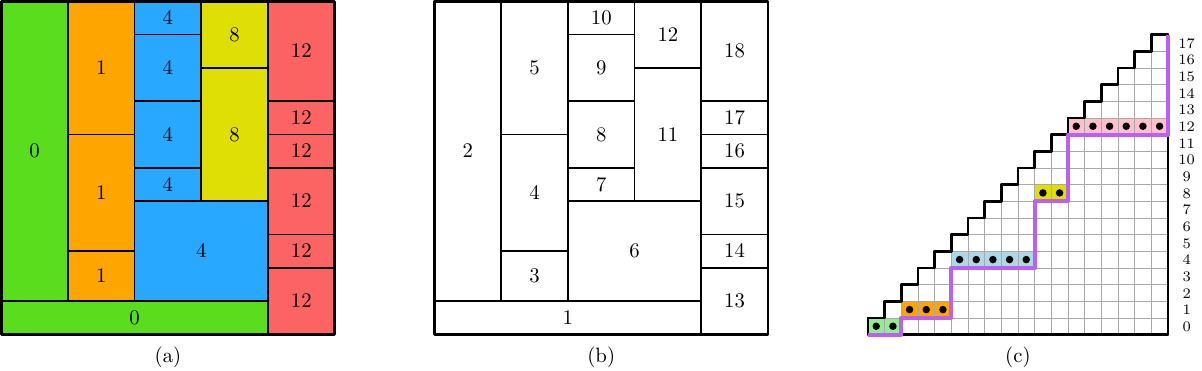}
	\caption{Bijection $\tau \colon R^w(\td) \to I(10)$:
	(a) Rectangulation $\mathcal{R}$ with the L-labeling;
	(b)~Rectangulation $\mathcal{R}$ with the \textrm{SW}--\textrm{NE} labeling;
(c) The inversion sequence $e=\tau(\mathcal{R})$ and the Dyck path $p=\delta(\R)$.	 
	 }
	\label{fig:tau_illustration}
\end{figure}
To see that $e$ is an inversion sequence,
note that if $Y_k \in \R$ is the rectangle labeled $k$ in the \textrm{SW}--\textrm{NE} labeling,
then there are \textit{precisely} $k-1$ rectangles which lie left of or below $Y_k$,
and therefore, \textit{at most} $k-1$ rectangles left of $Y_k$:
hence, $e_k \leq k-1$. 
To see that $e$ is non-decreasing,
consider $Y_{k}$ and $Y_{k+1}$, the rectangles of $\R$ labeled (respectively) $k$ and $k+1$
in the \textrm{SW}--\textrm{NE} labeling. Then $Y_{k+1}$ is either to the \linebreak

\vspace{-9pt}
\noindent\begin{minipage}{0.69\textwidth}
{right of or above $Y_k$.
In the former case the L-label of $Y_k$ is clearly smaller than that of $Y_{k+1}$, which means $e_k < e_{k+1}$.
In the latter case there is a horizontal segment $s$ which contains the top side of~$Y_{k}$ and the bottom side of~$Y_{k+1}$. 
The left endpoint of $s$ is the \textrm{SW}-corner of $Y_{k+1}$ and,  since $\R$ avoids $\td$,
it is also the \textrm{NW}-corner of~$Y_{k}$. Hence, the set of rectangles left of $Y_{k}$ and the set of rectangles left of $Y_{k+1}$ \linebreak}
\end{minipage}
\hspace{9pt}
\begin{minipage}{0.23\textwidth}{\raisebox{-6pt}{\includegraphics[scale=1]{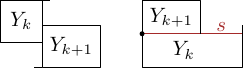}} }
\end{minipage}

\vspace{-8pt}
\noindent are the same.
Therefore, in this case, the L-labels of $Y_{k}$ and of $Y_{k+1}$ are equal, and we have $e_k = e_{k+1}$.

The result $|I_n(10)|=C_n$ was shown in several contributions: see, for example,
\cite[Theorem~27]{MartinezSavage2018} and~\cite[Section 2.1]{Testart2024}. 
The following direct bijection $\varepsilon$
from non-decreasing inversion sequences to Dyck paths 
is given in~\cite[Chapter~2, item~78]{Stanley2015}:
Given $e \in I_n(10)$,
consider its plot drawn over in the $[0,n]\times[0,n]$ grid.
Connect the point $(0,0)$ to the point $(n,n)$ by the lattice path which consists of $(1,0)$-steps just below the cells marked in the plot, and $(0,1)$-steps which are then needed to complete such a path. This lattice path is a (subdiagonal) Dyck path $\varepsilon(e)$ of semilength $n$,
and it is clear that $\varepsilon$ is bijective. 
Then $\delta := \varepsilon \circ \tau$
is a mapping from $R^w(\td)$ to Dyck paths of semilength $n$.
In Figure~\ref{fig:tau_illustration}(c), the Dyck path $p=\delta(\R)$ is shown by a purple line.

We now make several observations that will be useful later.
Denote the left side, the vertical segments, and the right side of $\R$,
ordered from left to right, by $s_1, s_2, \ldots, s_{t+1}$.
Two rectangles have the same L-label if and only if they are right neighbors of the same $s_k$.
When we construct $\tau(\R)$ by taking the rectangles of $\R$ according to the \textrm{SW}--\textrm{NE} ordering,
we first take the right neighbors of $s_1$, then the right neighbors of $s_2$, and so on;
for~each $k=1, 2, \ldots, t$, the right neighbors of $s_k$ are taken from bottom to top.
Therefore, the length of the $k$-th plateau is equal to the number of rectangles that neighbor $s_{k}$ on the right.
Finally, for $k \geq 2$, the set of rectangles to the left of $s_k$
is the disjoint union of rectangles to the left of $s_{k-1}$
and the left neighbors of $s_k$.
Therefore, the height of the jump from the $(k-1)$-th to the $k$-th plateau
is equal to the number of left neighbors of $s_k$.

As a result, the Dyck path $\delta(\R)$ can be directly obtained as follows:
Given $\R\in R^w(\td)$, we traverse it as shown by the white arrow in Figure~\ref{fig:dyck}(a): 
beginning in the \textrm{SW}-corner and ending in the \textrm{SE}-corner, 
we trace the left, right, and top sides of $R$ (\textrm{W}, \textrm{E}, and \textrm{N}) and each of the vertical segments in $\R$. 
Each rectangle we cross as we trace upwards corresponds to a $(1,0)$-step in the Dyck path 
and each rectangle we cross as we trace downwards corresponds to a $(0,1)$-step in the Dyck path. 
If we use instead $(1,1)$-and $(1,-1)$-steps, we directly obtain the standard form of~$p$.
Then, for every rectangle $X$ of $\R$, the left and the right sides of~$X$ correspond to
a matching pair of steps in $p$, see Figure~\ref{fig:dyck}(b).

\begin{figure}[h]
\begin{center}
\includegraphics[width=\textwidth]{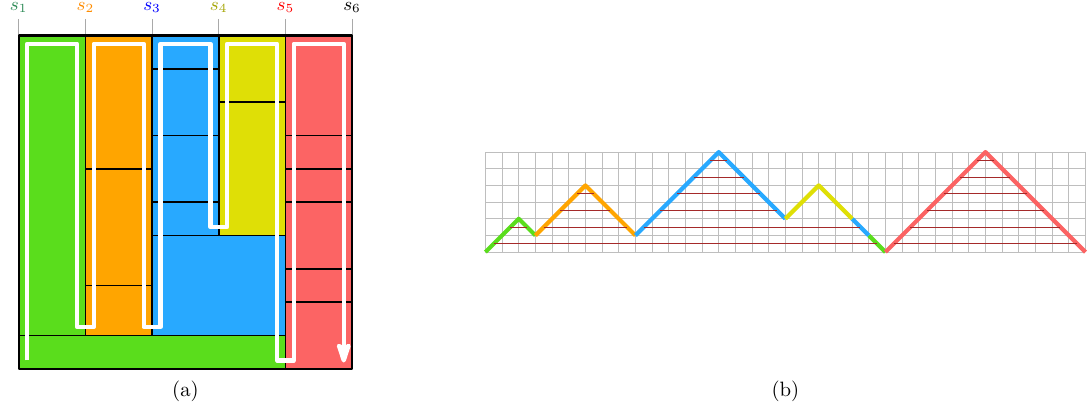} 
\caption{A direct bijection between $\td$-avoiding weak rectangulations and Dyck paths.
Pairs of vertical sides of the same rectangle in $\R$ correspond to pairs of matching steps in $p$.}
\label{fig:dyck}
\end{center}
\end{figure}

Given a Dyck path $p$, it is easy to restore the unique rectangulation $\R$ such that $\delta(\R)=p$:
one can use the bounding box of $p$ as $R$, draw a vertical segment upwards from every valley,
and then the pairs of matching steps indicate how rectangles should be inserted between the 
vertical sides and segments of~$R$. This proves that $\delta$ is a~bijection; therefore $\tau$ is also a bijection. \hfill$\qed$

\medskip
The \textbf{fourth proof} connects the avoidance of $\td$ to permutation patterns.
Let $\beta$ be the following version of the bijection
given by Ackerman, Barequet, and Pinter~\cite{AckermanBarequetPinter2006bij}, 
between weak rectangulations and \textit{Baxter permutations}:
Given a weak rectangulation~$\R$ of size $n$, 
label its rectangles by $1, 2, \ldots, n$ according to the \textrm{SE}--\textrm{NW} ordering,
and then read these labels according to the the \textrm{SW}--\textrm{NE} ordering.%
\footnote{In~\cite{AckermanBarequetPinter2006bij},
one labels rectangles by to the \textrm{NW}--\textrm{SE} ordering,
and then reads the labels according to the the \textrm{SW}--\textrm{NE} ordering.
We use different orderings in order to make the statement of Theorem~\ref{thm:thetabeta} particularly simple.} The permutation obtained in this way is defined to be~$\beta(\R)$.

We prove a substantially stronger result which does not just deal with enumeration, 
but provides a correspondence between a rectangulation pattern and a permutation pattern.
Therefore, we give it separately as Theorem~\ref{thm:213}.
It implies $|R^w(\td)|=C_n$ directly, since $(213)$-avoiding permutations are enumerated by Catalan numbers (and all $(213)$-avoiding permutations are Baxter).

\begin{theorem}\label{thm:213}
A weak rectangulation $\R$ avoids the pattern $\td$ if and only the permutation $\pi=\beta(\R)$ avoids the pattern $213$.
\end{theorem}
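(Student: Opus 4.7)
The plan is to translate between the two pattern conditions via a dictionary between the orderings underlying $\beta$. Label the rectangles of $\R$ as $X_1, \ldots, X_n$ in SW--NE order, so $\pi_i$ is the SE--NW label of $X_i$. For $i<j$, the fact that $X_i$ precedes $X_j$ in SW--NE forces $X_i$ to be left of or below $X_j$; the condition $\pi_i>\pi_j$ (i.e.\ $X_i$ succeeds $X_j$ in SE--NW) forces $X_i$ to be above or left of $X_j$. Intersecting these partial orders yields the key dictionary: for $i<j$, $\pi_i>\pi_j$ iff $X_i$ is left of $X_j$, and $\pi_i<\pi_j$ iff $X_i$ is below $X_j$. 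This translates every $213$-pattern in $\pi$ into a specific geometric triple of rectangles in $\R$ and vice versa.

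For the forward direction, suppose $\R$ contains a $\td$-joint. It locally determines three rectangles: $A$ above the horizontal segment, and $B,C$ the direct bottom-neighbors on the left and right of the vertical stem. By construction $B$ is left of $C$ and both are below $A$, so in SW--NE order they appear as $(B,C,A)$, and their SE--NW labels satisfy $C<B<A$. Reading these labels in SW--NE order yields $(\pi(B),\pi(C),\pi(A))=(\text{middle},\text{small},\text{large})$, which is a $213$-pattern in $\pi$.

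For the backward direction I would prove the contrapositive by induction on $n$: if $\R$ avoids $\td$, then $\pi$ avoids $213$. Since both windmills contain a $\td$-joint, any $\td$-avoiding $\R$ of size $\geq 2$ is guillotine and hence admits a cut. If $\R$ has a vertical cut into $\R_L\mid\R_R$, then unwinding the two orderings forces $\pi=(\beta(\R_L)+|\R_R|)\cdot\beta(\R_R)$, a concatenation in which every entry of the left block exceeds every entry of the right block; any $213$-pattern in such a concatenation must lie entirely within one block, so the inductive hypothesis on $\R_L$ and $\R_R$ closes this case. If $\R$ has no vertical cut, take its topmost horizontal cut, splitting $\R$ into $\R_T$ above and $\R_B$ below. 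Here I would invoke the structural lemma that any vertical segment in $\R_B$ would have its top endpoint in the interior of the cut (the endpoints cannot lie on $W$ or $E$ of $R$), producing a $\td$-joint in $\R$; hence $\R_B$ has no vertical segments and is a vertical stack, so $\beta(\R_B)=\mathrm{id}_{|\R_B|}$. Therefore $\pi=(1,2,\ldots,|\R_B|)\cdot(\beta(\R_T)+|\R_B|)$; any $213$-pattern must lie entirely in the suffix, since the ascending prefix uses strictly smaller values and cannot supply an inverted pair or the role of the $3$, so $\pi$ avoids $213$ iff $\beta(\R_T)$ does, and induction concludes.

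The main obstacle is the horizontal-cut case, specifically the structural lemma forcing $\R_B$ to be a vertical stack in a $\td$-avoiding rectangulation. Once this is in hand, the two cut cases reduce to the uniform fact that value-separated concatenations of permutations avoid $213$ iff both factors do. The remaining steps---unwinding the definitions of the SE--NW and SW--NE orderings to justify the claimed decomposition of $\pi$ under each cut---are routine bookkeeping.
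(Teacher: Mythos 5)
Your proof is correct, but it reaches the harder direction by a genuinely different route than the paper. The forward direction (a $\td$ joint yields a $213$) coincides with the paper's, and your explicit dictionary (for $i<j$: $\pi_i>\pi_j$ iff $X_i$ is left of $X_j$, and $\pi_i<\pi_j$ iff $X_i$ is below $X_j$) is a clean statement of what the paper uses implicitly. For the converse, the paper argues locally: it refines a $213$ occurrence to one in which the entries playing the roles of ``1'' and ``3'' are adjacent in position (so the corresponding rectangles are consecutive in the \textrm{SW}--\textrm{NE} ordering and share a horizontal segment $s$), and then shows that the witness $X_j$ forces the left side of the lower rectangle to stop short of the left endpoint of $s$, producing a $\td$ directly. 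You instead prove the contrapositive by induction on the guillotine decomposition (legitimate, since both windmills contain $\td$), showing that $\pi$ is a skew sum over a vertical cut and a direct sum with an identity block over a horizontal cut, and that $213$ cannot straddle either kind of block decomposition; your structural lemma that the part below a horizontal cut is a vertical stack follows from the observation (made in the paper) that in a $\td$-avoiding rectangulation every vertical segment reaches $\mathrm{N}$. The trade-off: the paper's argument is constructive in both directions --- from a $213$ occurrence it exhibits a specific $\td$ joint, which fits its broader agenda of matching rectangulation patterns to permutation patterns --- and does not need guillotineness; your induction is less local but makes the recursive structure of $\pi$ explicit, essentially mirroring the generating-function decomposition in Proof~1 of Theorem~\ref{thm:td_weak}. (One trivial slip: for a vertical segment in the lower part, the relevant point is that its top endpoint cannot lie on $\mathrm{N}$, not on $\mathrm{W}$ or $\mathrm{E}$; this does not affect the argument.)
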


\noindent 
\begin{minipage}{0.7\textwidth}
{
\noindent \textbf{Proof:} Assume that $\R$ contains $\td$, 
and denote the rectangles adjacent to its joint point by $X_i, X_j, X_k$ as in the drawing. 
Then, in the \textrm{SE}--\textrm{NW} ordering we have $X_i \prec X_j  \prec  X_k$,
and in the \textrm{SW}--\textrm{NE} ordering we have $X_j  \prec X_i  \prec  X_k$. Hence, $\pi$ has an occurrence of~$213$.
}
\end{minipage} \hspace{30pt}
\begin{minipage}{0.18\textwidth}
{
\includegraphics[scale=1]{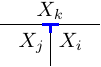}
}
\end{minipage}

\vspace{2pt}

\noindent 
\begin{minipage}{0.7\textwidth}
\setlength{\parindent}{1.5em}
{Now assume that $\pi$ contains the pattern $213$.
Then it necessarily contains $2\underline{13}$ ---
an occurrence of $213$ in which the elements corresponding to $1$ and $3$ are adjacent.
Therefore, $\R$ has three rectangles labeled $X_i, \, X_j, \, X_k$
such that we have $X_i\prec X_j \prec X_k$ in the \textrm{SE}--\textrm{NW} ordering,
$X_j \prec X_i \prec X_k$ in the \textrm{SW}--\textrm{NE} ordering, 
and, moreover, $X_k$ is the direct successor of~$X_i$ in the \textrm{SW}--\textrm{NE} ordering.
Then there is a horizontal segment $s$ that contains 
the top side of $X_i$ and the bottom side of $X_k$, and its left endpoint $P$ is the \textrm{SW}-corner of~$X_k$. 
If $X_i$ extends to the left so that its \textrm{NW}-corner is $P$,
then the left sides of $X_i$ and~$X_k$ are contained in the same vertical segment and
therefore, a rectangle $Y$ of $\R$ is to the left of~$X_i$ if and only if it is to the left of $X_k$. 
However, the \textrm{SE}--\textrm{NW} and \textrm{SW}--\textrm{NE} 
orderings imply that $X_j$ is to the left of $X_i$ but below~$X_k$.
Therefore, $X_i$~does not extend to contain~$P$, 
and hence, the left side of~$X_i$ together with $s$ forms $\td$. \hfill $\qed$
}
\end{minipage}\hspace{20pt}
\begin{minipage}{0.18\textwidth}
{
\includegraphics[scale=1]{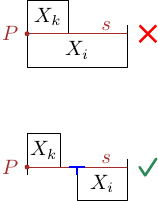}
}
\end{minipage}

\vspace{8pt}

The mappings $\tau$ and $\beta$ are related as the following theorem shows.

\begin{theorem}\label{thm:thetabeta}
For every weak rectangulation $\R$ we have $\tau(\R) = \Theta(\beta(\R))$.
\end{theorem}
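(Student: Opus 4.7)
The plan is to prove the identity entry-by-entry: for each $k$, I will show that the $k$-th coordinate of $\tau(\R)$ coincides with the $k$-th coordinate of $\Theta(\beta(\R))$. Let $Y_k$ denote the $k$-th rectangle of $\R$ in the $\textrm{SW}$--$\textrm{NE}$ order. By the definition of $\tau$, the $k$-th entry $\tau(\R)_k$ is the L-label of $Y_k$, i.e., the number of rectangles of $\R$ that lie \emph{left of} $Y_k$. On the other hand, writing $\pi = \beta(\R)$, the entry $\pi_k$ is the $\textrm{SE}$--$\textrm{NW}$ label of $Y_k$, and by the definition of $\Theta$,
\[
\Theta(\pi)_k \;=\; \bigl|\{ i \colon 1 \leq i < k \text{ and } \pi_i > \pi_k \}\bigr|.
\]

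The key step is to rewrite this index set geometrically. The condition $i<k$ says that $Y_i$ precedes $Y_k$ in the $\textrm{SW}$--$\textrm{NE}$ order, i.e., $Y_i$ is \emph{left of or below} $Y_k$. Since the $\textrm{SE}$--$\textrm{NW}$ labeling assigns label $1$ to the $\textrm{SE}$-corner rectangle, the condition $\pi_i > \pi_k$ says that $Y_i$ comes \emph{later} than $Y_k$ in the $\textrm{SE}$--$\textrm{NW}$ order, i.e., $Y_i$ is \emph{left of or above} $Y_k$. Using the fact that any two distinct rectangles satisfy exactly one of the four neighborhood relations \emph{left of}, \emph{right of}, \emph{above}, \emph{below}, the conjunction ``(left of or below) and (left of or above) $Y_k$'' collapses precisely to ``left of $Y_k$'', and conversely every rectangle left of $Y_k$ trivially satisfies both conditions. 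Hence $\Theta(\pi)_k$ equals the number of rectangles left of $Y_k$, which is $\tau(\R)_k$.

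I do not expect a real obstacle: the argument boils down to a single set-theoretic intersection, once the four corner-orderings are handled consistently. The place where one can slip is the direction convention for the $\textrm{SE}$--$\textrm{NW}$ ordering, so that $\pi_i>\pi_k$ corresponds to $Y_i$ being \emph{farther} from the $\textrm{SE}$ corner than $Y_k$ rather than closer; once this is pinned down the rest is bookkeeping. It is also worth noting that, although $\tau$ was introduced only for $\td$-avoiding $\R$, the construction is meaningful for any weak rectangulation (the L-label of $Y_k$ is always at most $k-1$, since left-of implies earlier-in-$\textrm{SW}$--$\textrm{NE}$), so the statement of the theorem makes sense as written.
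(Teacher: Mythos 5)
Your proof is correct and follows essentially the same route as the paper's: both identify $\Theta(\beta(\R))_k$ with the count of rectangles that precede $Y_k$ in the SW--NE order but follow it in the SE--NW order, and observe that this intersection is exactly the set of rectangles left of $Y_k$, i.e.\ the L-label. Your version merely spells out the trichotomy step (left-of-or-below intersected with left-of-or-above equals left-of) a bit more explicitly, and your closing remark that $\tau$ extends verbatim to all weak rectangulations is accurate.
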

\begin{proof}
Let $\pi = \beta(\R) = \pi_1 \pi_2 \ldots \pi_n$. This means: for every $j$,
the rectangle $Y_j$ labeled $j$ in the \textrm{SW}--\textrm{NE} labeling
is labeled $\pi_j$ in the \textrm{SE}--\textrm{NW} labeling.
Therefore, the number of indices $i$ such that $i<j$ and $\pi_i>\pi_j$
is equal to the number of rectangles that occur earlier than $Y_j$ in the \textrm{SW}--\textrm{NE} ordering
but later than $Y_j$ in the \textrm{SE}--\textrm{NW} ordering.
This is precisely the number of rectangles to the left of $Y_j$,
and therefore, the  $j$-th component of $\Theta(\beta(\R))$
is precisely the L-label of $Y_j$. Hence, $\Theta(\beta(\R))$ coincides with $\tau(\R)$.
\end{proof}

For example, for $\R$ from Figure~\ref{fig:tau_illustration},
we have 
$\beta(\R) = 7 \ 18 \ 15 \ 16 \ 17 \ 8 \ 11 \ 12 \ 13 \ 14 \ 9 \ 10 \ 1 \ 2 \ 3 \ 4 \ 5 \ 6 \ 7$,
whose inversion sequence is indeed $e=(0,0,1,1,1,4,4,4,4,4,8,8,12,12,12,12,12,12)$,
see Figure~\ref{fig:tau_perm}.

\begin{figure}[h]
	\centering
\includegraphics[width=\textwidth]{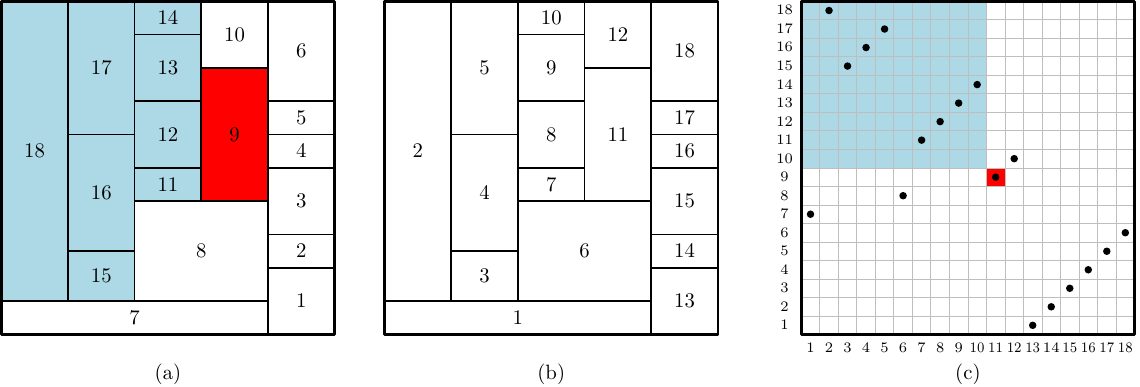}
	\caption{Illustration to Theorem~\ref{thm:thetabeta}: 
(a)~The \textrm{SE}--\textrm{NW} labeling of $\R$.	
(b)~The \textrm{SW}--\textrm{NE} labeling of $\R$.
(c)~The permutation $\beta(\R)$: $\Theta(\beta(\R))$ coincides with $\tau(\R)$. The highlighted areas illustrate $\pi_{11}=9$ and $e_{11}=8$.}
	\label{fig:tau_perm}
\end{figure}

\subsubsection{Classes of inversion sequences enumerated by A279555}
\label{sec:555}
Now we turn to the enumeration of $\td$-avoiding \textit{strong} rectangulations. 
M\"utze and Namrata~\cite{MuetzeNamrata} explored them computationally and 
observed that the first elements of their enumerating sequence match \href{https://oeis.org/A279555}{A279555}. 
In the OEIS, \href{https://oeis.org/A279555}{A279555} is defined as the enumerating sequence of two classes of inversion sequences, 
$I(010,110,120,210)$ and $I(010,100,120,210)$.
To our knowledge, these classes were first considered by Martinez and Savage
in their study of classes of inversion sequences 
that avoid triples of relations~\cite{MartinezSavage2018}.
In their classification, these are classes 764A and 764B;
in~\cite[Theorem~62]{MartinezSavage2018} they show that these classes are Wilf-equivalent.
At the time of writing, these two classes of inversion sequences are the only interpretations of 
\href{https://oeis.org/A279555}{A279555} in OEIS. 
However, this sequence has been proven, or conjectured, 
to enumerate some additional classes of inversion sequences, as outlined below.

In~\cite[Theorem~8.1]{YanLin2020}, Yan and Lin proved that 
$I(011, 201)$ and $I(011, 210)$ (their classes 3091A and 3091B) 
are Wilf-equivalent, and conjectured that they are also enumerated by 
\href{https://oeis.org/A279555}{A279555}~\cite[Conjecture~8.3]{YanLin2020}.
To the best of our knowledge, this conjecture had not been proven at the time of writing.
We provide a proof in Theorem~\ref{thm:conj}.

In~\cite[Section~4.5]{KotsireasMansourYildirim2024}, Kotsireas, Mansour, Yıldırım
constructed a generating tree for $I(011, 201)$ 
and gave a~functional equation satisfied by the generating function. 
The expression given in~\cite[Equation~4.16]{KotsireasMansourYildirim2024} contained a typo which was fixed by Pantone~\cite[Section~6.5]{Pantone2024}, 
who also computed 500 terms for $I(011, 201)$ and for $I(010, 100, 120, 210)$.
His computations showed that the enumerating sequences of these classes coincide 
at least to that extent --- thus, providing strong evidence to the conjecture of Yan and Lin.

In~\cite[Table~1, Case~166]{CallanMansour2023}, Callan and Mansour 
listed nine classes of inversion sequences determined by quadruples of patterns of length $3$,
which are enumerated, or were conjectured to be enumerated, 
by~\href{https://oeis.org/A279555}{A279555}. 
The classes $I(010,110,120,210)$ and $I(010,100,120,210)$
from the original definition of~\href{https://oeis.org/A279555}{A279555} 
are their classes 166(8) and 166(6).
The classes $I(011, 201)$ and $I(011, 210)$ appear in their list as 
$I(011, 101, 110, 201)$ (class 166(3)) and $I(011, 101, 110, 210)$ (class 166(4)).\footnote{To see that $I(011, 201) = I(011, 101, 110, 201)$
and $I(011, 210) = I(011, 101, 110, 210)$, note that avoidance of $011$ 
directly implies avoidance of $101$ and of $110$. In~\cite[Theorem~1]{CallanMansour2023}, a different proof is given.} 
Thus, Conjecture~1.2~\cite{CallanMansour2023} by Callan and Mansour is equivalent to Conjecture~8.3~\cite{YanLin2020} by Yan and Lin.  

From the results achieved in~\cite{CallanMansour2023, KotsireasMansourYildirim2024,MartinezSavage2018, Pantone2024, YanLin2020} 
it is known at the time of writing that:
\begin{itemize}
\item The classes $166(6-9)$ are Wilf-equivalent and enumerated by~\href{https://oeis.org/A279555}{A279555},  
\item The classes $166(1-5)$ are Wilf-equivalent and \textit{conjecturally} enumerated by~\href{https://oeis.org/A279555}{A279555}.
\end{itemize}

We summarize these results in Table~\ref{tab:nine},
which is adapted from~\cite[Table~1, Case~166]{CallanMansour2023}.

\renewcommand{\arraystretch}{1.2}
\begin{table}[h]
%\scalebox{0.95}{
\begin{center}
\begin{tabular}{|c|c|c|c|}
\hline
 & Class & Proofs of Wilf-equivalence & Notation in~\cite{MartinezSavage2018, YanLin2020}   \\ \hline
(1) & $I(010,100,110,201)$ &  &   \\ 
(2) & $I(010,100,110,210)$ & $(3) \sim (4)$: \cite{YanLin2020}.  &   \\ 
(3) & $I(011,101,110,201)$ & $(1-5)$: Isomorphic generating trees~\cite{CallanMansour2023}.  & $=I(011,201)$ \cite[3091A]{YanLin2020}  \\ 
(4) & $I(011,101,110,210)$ & \href{https://oeis.org/A279555}{A279555}: conjectured~\cite{CallanMansour2023, Pantone2024, YanLin2020}.& $=I(011,210)$ \cite[3091B]{YanLin2020}   \\ 
(5) & $I(010,101,110,201)$ &  & (shown in~\cite{CallanMansour2023}).   \\ \hline
(6) & $I(010,100,120,210)$ & $(6) \sim (8)$: \cite{MartinezSavage2018}.   & \cite[764B]{MartinezSavage2018}   \\ 
(7) & $I(010,101,120,201)$ & $(6-8)$: Isomorphic generating trees~\cite{CallanMansour2023}. &  \\ 
(8) & $I(010,110,120,210)$ & \href{https://oeis.org/A279555}{A279555}: (6,8) by definition~\cite{MartinezSavage2018}. & \cite[764A]{MartinezSavage2018}  \\ \hdashline
(9) & $I(010,101,120,210)$ & \href{https://oeis.org/A279555}{A279555}: via bijection to (7)~\cite{CallanMansour2023}.&    \\ \hline 
   \end{tabular}
%   }
   \caption{Nine classes of inversion sequences avoiding four patterns of length 3, 
   proven or conjectured to be enumerated by \href{https://oeis.org/A279555}{A279555} in~\cite{CallanMansour2023,MartinezSavage2018, Pantone2024,YanLin2020}.
   (Adapted from~\cite[Table~1, Case~166]{CallanMansour2023}.)}
   \label{tab:nine}
\end{center}
\end{table}
\renewcommand{\arraystretch}{1}

In this section we show that $R^s(\td)$ is equinumerous to classes 
166(3,6,7,8) from Table~\ref{tab:nine} by constructing generating trees and also 
by giving explicit bijections.
First, in Section~\ref{sec:td_strong1}, we show that $R^s(\td)$ is equinumerous
to $I(010,101,120,201)$, $I(010,110,120,210)$ and $I(010,100,120,210)$,
thus confirming the conjecture by Mütze and Namrata that $R^s(\td)$ is enumerated
by \href{https://oeis.org/A279555}{A279555}.
To the best of our knowledge, it is the first interpretation of 
this sequence using a combinatorial structure other than a class of inversion sequences.
Then, in Section~\ref{sec:td_strong2}, we show that $R^s(\td)$ is equinumerous to $I(011,201)$. 
In~Section~\ref{sec:conj} we combine these results, which completes the proof of the conjecture 
that all the classes in Table~\ref{tab:nine} are Wilf-equivalent and
enumerated by \href{https://oeis.org/A279555}{A279555}.

\subsubsection{$\td$-avoiding strong rectangulations: bijections with classes of inversion sequences 
$I(010,101,120,201)$, $I(010,110,120,210)$, and $I(010,100,120,210)$}
\label{sec:td_strong1}

Denote 
$I^{(6)}:=I(010,100,120,210)$,
$I^{(7)}:=I(010,101,120,201)$, and 
$I^{(8)}:=I(010,110,120,210)$, in accordance to the case number in Table~\ref{tab:nine}.
In this section we prove that $R^s(\td)$ is equinumerous to these three classes.
We first prove it by considering a generating tree T1 
given by Pantone~\cite{Pantone2024} for $I^{(6)}$,
and showing that it can be also regarded as a generating tree for $R^s(\td)$. 
Then we describe explicit bijections between $R^s(\td)$ and these three classes 
of inversion sequences.

We begin by exploring the structure of $I^{(7)}$, $I^{(8)}$, and $I^{(6)}$.
Let $e=(e_1, e_2, \ldots, e_n)$ be an element of any of these classes, and let 
$(0=) \ e_{a_1}<e_{a_2}<\ldots<e_{a_t}$ be its left-to-right maxima.
Then, since $e$ avoids $010$ and $120$, it satisfies the following condition:
\begin{itemize}
\item \textbf{Condition A}: For every $j=1, 2, \ldots, t$,
we have $a_j \leq k < a_{j+1} \ \Longrightarrow e_{a_{j-1}} < e_k \leq e_{a_j}$. \\
(For $j=1$, the condition just says that we have $e_k=0$ for all $k$ such that $(1 =) \ a_1 \leq k < a_2$.) 
\end{itemize}
This means that in the plot of $e$ all the elements lie in rectangular areas
$[a_j, a_{j+1}-1] \times [e_{a_{j-1}}+1, e_{a_{j}}]$ 
to which we refer as \textit{active areas}.
(By convention, the first active area is $[a_1, a_2] \times \{0\}$,
and the last active area is $[a_t, n] \times [e_{a_{t-1}}+1, e_{a_{t}}]$.)

Then, the two remaining patterns imply a condition, specific to each of the classes, 
concerning elements within active areas:

\begin{itemize}
\item For $I^{(7)} = I(010,101,120,201)$, \textbf{Condition B1}:
In every active area, the elements are weakly decreasing.
\item For $I^{(8)} = I(010,110,120,210)$, \textbf{Condition B2}:
In every active area, the elements are weakly increasing, 
except for the first element $e_{a_j}$. 
\item For $I^{(6)} = I(010,100,120,210)$, \textbf{Condition B3}: 
In every active area, the elements whose value is not equal to its first element $e_{a_j}$, 
are strictly increasing. \end{itemize}

It is easy to see that every sequence that satisfies condition \textbf{A} and any of the conditions
\textbf{B1}, \textbf{B2}, \textbf{B3}, belongs to the respective class.
That is, conditions \textbf{A} and \textbf{B1} characterize $I^{(7)}$,
conditions \textbf{A} and \textbf{B2} characterize $I^{(8)}$, and
conditions \textbf{A} and \textbf{B3} characterize $I^{(6)}$.
In Figure~\ref{fig:str1} we give three examples to exhibit these characterizations 
(active areas are highlighted by orange).
Note that these characterizations immediately imply 
$I^{(7)} \sim I^{(8)} \sim I^{(6)}$:
an element of $I^{(8)}$ can be obtained from an element of $I^{(7)}$
by vertical reflection of every active area, except for its first column; 
and an element or $I^{(6)}$ can be obtained from an element of $I^{(8)}$
by the following transformation:
if several elements in the $j$-th active area have the same value smaller than $e_{a_j}$,
then all of them, except for the last one, are replaced by $e_{a_j}$.
These transformations are clearly bijective; 
in Figure~\ref{fig:str1} we apply them to obtain $e_1 \mapsto e_2 \mapsto e_3$. 
We will primarily deal with $I^{(7)}$, 
since the condition \textbf{B1} appears to be the most natural one.

\begin{figure}[h]
\begin{center}
\includegraphics[width=\textwidth]{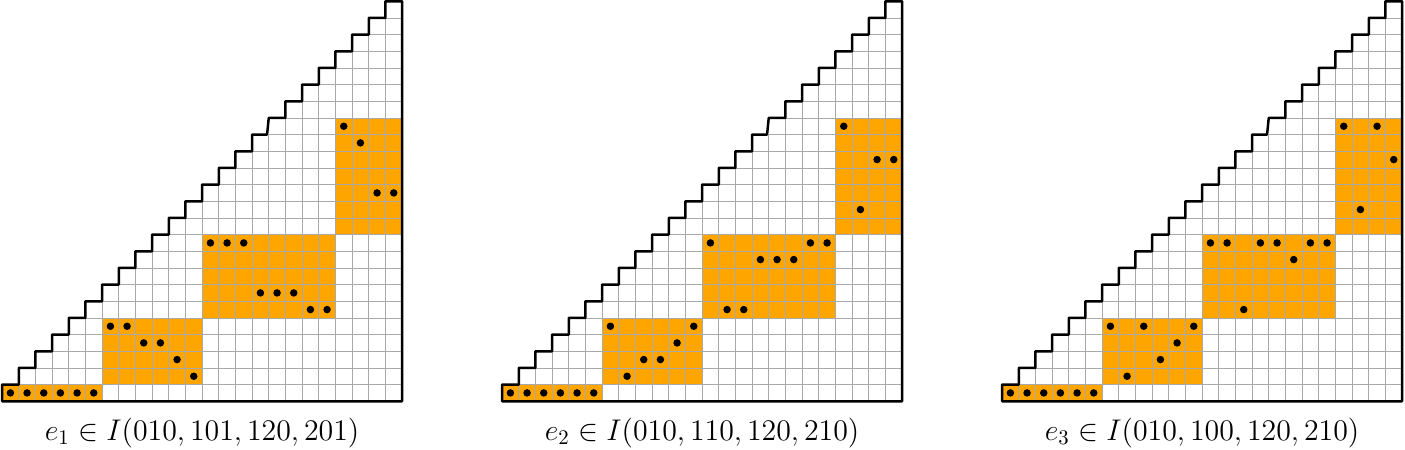} 
\end{center}
\caption{Examples of inversion sequences in $I^{(7)}=I(010,101,120,201)$, $I^{(8)}=I(010,110,120,210)$, and $I^{(6)}=I(010,100,120,210)$.
Active areas are highlighted by orange.}
\label{fig:str1}
\end{figure}

\bigskip

In~\cite[Section~6.5]{Pantone2024}, Pantone gave the following generating tree\footnote{See, for example, \cite{West1996} 
for a brief introduction to the method of generating trees and the related terminology.} T1 for $I^{(6)}$:

\begin{tcolorbox}[breakable,no shadow,
empty,
enhanced,]
\textbf{Generating Tree T1~\cite[Section~6.5]{Pantone2024}}
\[
\begin{array}{rrclccc}
\mathsf{Root:} & \  (1,0). & & & & & \\
\mathsf{Succession \ rules:} &(k, \ell) \ & \longrightarrow & (1,k-1), \ (2,k-2),  \ \ldots, \ (k,0); && (*)\\
&&& (k+1,0), \ (k+1, 1), \ \ldots, \  (k+1, \ell). && (**)
\end{array}
\]
\end{tcolorbox}
Here, $k$ is the bounce\footnote{Reminder: The bounce of an inversion sequence $e$ is 
$k=n-M$, where $n$ is the length and $M$ is the maximum value of $e$.} of $e \in I^{(6)}_n$, and 
$\ell$ is the number of \textit{admissible values}
(that is, the values that can be inserted at the end of $e$ to produce a valid element of $I^{(6)}_{n+1}$) which are smaller than $M$, the maximum of $e$.
The~rule $(*)$ represents adding a new maximum (and starting a new active area).
The rule~$(**)$ represents adding a new value within the last active area of $e$.
We refer to the pair $(k, \ell)$ for a specific $e$ as the \textit{type} of $e$.

It is easy to verify that T1 is also a generating tree for $I^{(7)}$ and $I^{(8)}$. 
In both cases, the statistic $k$ has the same meaning as for $I^{(6)}$ (the bounce).
The statistic $\ell$ for $I^{(8)}$ has the same meaning as for $I^{(6)}$;
and the statistic~$\ell$ for $I^{(7)}$ is the number of 
admissible values which are smaller than the final value of $e$. 
(All such values clearly belong to the range of the last active area.)
See Figure~\ref{fig:gt1is} for the illustration:
assuming that a given inversion sequence $e \in I^{(7)}_n$ has the type $(k, \ell)$,
black dots show all admissible values,
and the type of $e' \in I^{(7)}_{n+1}$ which is obtained by adding the respective admissible value
is shown near every dot.

\begin{figure}[h]
\begin{center}
\includegraphics[scale=1]{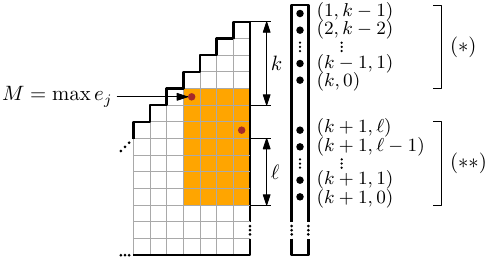}
\caption{Illustration for the succession rules of T1 for $I^{(7)}$.}
\label{fig:gt1is}
\end{center}
\end{figure}

\medskip

Now we can state and prove the main result of this section.

\begin{theorem}
\label{thm:strong1}
For every $n \geq 1$ we have
\[|R_n^s(\td)|=|I_n(010,101,120,201)|=|I_n(010,110,120,210)|=|I_n(010,100,120,210)|.\]
\end{theorem}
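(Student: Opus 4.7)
The plan is to leverage the generating tree T1 already established for $I^{(6)}$ (and noted to apply verbatim to $I^{(7)}$ and $I^{(8)}$), and to show that $R^s(\td)$ grows according to the same tree. Since the equinumerosities $I^{(7)}\sim I^{(8)}\sim I^{(6)}$ follow either from T1 or from the explicit value-rewriting transformations on active areas already exhibited in Figure~\ref{fig:str1} (reflecting active areas vertically, then merging repeated non-maximal values), it suffices to construct a size-preserving, bijective growth process on $R^s(\td)$ whose succession rules match those of T1, and to read off the statistics $(k,\ell)$ from the rectangulation. Once this is done, the three equalities in the theorem follow immediately, and a concrete bijection can be obtained by recording, for each newly inserted rectangle, which branch of the succession rule was taken.

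First I would set up a canonical insertion process on $R^s(\td)$, building any strong rectangulation rectangle-by-rectangle in the \textrm{SW}--\textrm{NE} order already used in Proof~3 of Theorem~\ref{thm:td_weak}. At each step, the newly inserted rectangle $Y_{j+1}$ is either the right neighbor or the top neighbor of $Y_j$, corresponding (respectively) to starting a new column past the current rightmost vertical segment or to stacking inside that column. For a partial rectangulation I would define $k$ to be the number of admissible locations along the rightmost staircase at which a new column could be launched (these are the current \textrm{NE}-staircase rectangles that still have free \textrm{S}- or \textrm{E}-frontage), and $\ell$ to be the number of admissible positions for stacking inside the current rightmost column — this count captures the available shufflings of the newly introduced vertical segment, which is exactly what makes the process sensitive to the strong (rather than the weak) equivalence.

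Next I would verify the rules of T1 step by step. Opening a new column past the rightmost segment should produce exactly $k$ children, whose types $(1,k-1),(2,k-2),\ldots,(k,0)$ correspond to the $k$ possible depths at which the new vertical segment starts along the staircase, in perfect parallel to the rule~$(*)$. Stacking inside the current column consumes one of the $\ell$ admissible shufflings of the rightmost vertical segment; the new type becomes $(k+1,\ell')$ with $\ell'\in\{0,1,\ldots,\ell\}$, since the freshly inserted rectangle is added to the staircase (hence $k\mapsto k+1$) and the stacking position determines $\ell'$, matching rule~$(**)$. Injectivity of the growth is automatic from the \textrm{SW}--\textrm{NE} order; surjectivity amounts to verifying that each step can be inverted by deleting the last rectangle and patching the resulting corner.

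The main obstacle will be pinning down the insertion procedure precisely for the \emph{strong} class. Weak equivalence only tracks segment-to-rectangle contacts, so the statistic $\ell$ is visible from the underlying Dyck path of Proof~3 of Theorem~\ref{thm:td_weak}; but strong equivalence additionally records the shuffling on each vertical segment, and it is precisely these shufflings that encode the admissible positions counted by $\ell$. I would have to check that the $\td$-avoidance constraint does not create illegal joints at any intermediate step, and that every strong rectangulation in $R^s(\td)$ is produced exactly once, without double-counting across different shuffling sequences. Once this verification is in place, comparing the resulting generating tree with T1 yields $|R^s_n(\td)|=|I^{(6)}_n|=|I^{(7)}_n|=|I^{(8)}_n|$, and the path from the root of T1 to the node associated with a given $\mathcal{R}\in R^s_n(\td)$ furnishes the explicit bijections to each of the three inversion sequence classes.
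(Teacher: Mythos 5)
Your proposal follows essentially the same route as the paper: the paper likewise proves the theorem by showing that Pantone's generating tree T1 governs $R^s(\td)$ under $\mathrm{NE}$-corner insertion, which for $\td$-avoiding rectangulations is precisely insertion in \textrm{SW}--\textrm{NE} order (launching a new column via rule $(*)$ versus stacking in the rightmost column via rule $(**)$), with uniqueness settled by inspecting the shape of the $\mathrm{SW}$-corner of the $\mathrm{NE}$-rectangle. The only point to tighten is the label: the paper takes $k$ to be the number of $\mathrm{E}$-rectangles and $\ell$ to be the number of horizontal segments neighboring the $\mathrm{NE}$-rectangle on the left (so rule $(**)$ has $\ell+1$ children), whereas your ``number of admissible stacking positions'' is off by one from this.
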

\begin{proof}
We show that T1 is a generating tree for $R^s(\td)$,
where the interpretation of $k$ and $\ell$ is as follows:
\begin{itemize}
\item $k$ is the number of $\mathrm{E}$-rectangles in $\R$,
\item $\ell$ is the number of horizontal segments that neighbor 
the $\mathrm{NE}$-rectangle of $\R$ on the left.
\end{itemize}

We refer to the pair $(k, \ell)$ for a specific $\R$ as the \textit{type} of $\R$.
The rectangulation of size $1$ has type $(1,0)$. 
Given a rectangulation $\R \in R^s_n(\td)$ of type $(k, \ell)$, 
we can obtain a rectangulation $\R' \in R^s_{n+1}(\td)$  
by inserting a~new $\mathrm{NE}$-rectangle in one of the following ways (see Figure~\ref{fig:gt1}):
\begin{itemize}
\item Inserting a new $\mathrm{NE}$-rectangle while pushing to the left the upper $j$ $\mathrm{E}$-rectangles, where $1 \leq j \leq k$.
The new rectangulation has the type $(k-j+1, j-1)$. This yields the succession rule $(*)$.
\item Inserting a new $\mathrm{NE}$-rectangle while pushing downwards the top edge of the  $\mathrm{NE}$-rectangle of $\R$, 
such that precisely $i$ segments, where $0 \leq i \leq \ell$, touch the new $\mathrm{NE}$-rectangle from the left.
The new rectangulation has the type $(k+1, i)$.  This yields the succession rule $(**)$.
\end{itemize}

\begin{figure}[h]
\begin{center}
\includegraphics[width=\textwidth]{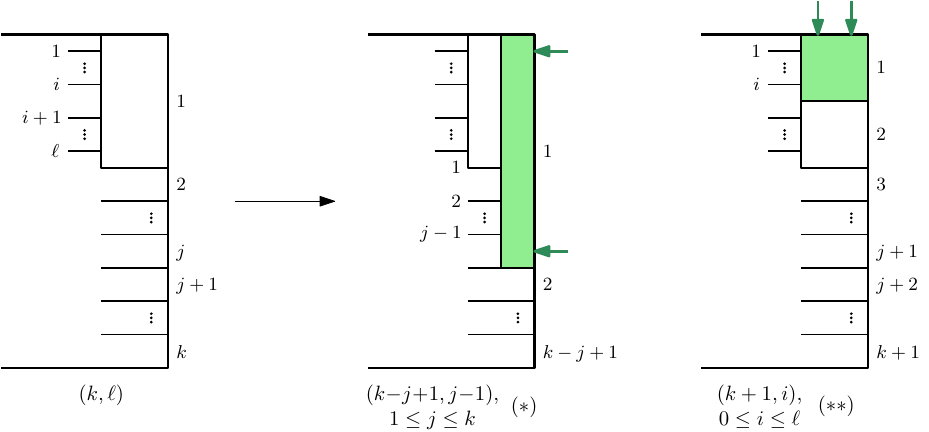}
\caption{Illustration for the succession rules of T1 for $R^s(\td)$.}
\label{fig:gt1}
\end{center}
\end{figure}

These succession rules ensure that every rectangulation in $R^s(\td)$ is generated precisely once:
Consider $\R' \in R^s_{n+1}(\td)$, and let $X$ be its $\mathrm{NE}$-rectangle.
If the $\mathrm{SW}$-corner of $X$ has the $\tu$ shape or $X$ is an $\mathrm{S}$-rectangle,
then $\R'$ is obtained from $\R\in R^s_{n}(\td)$ by rule $(*)$.
If the $\mathrm{SW}$-corner of $X$ has the $\tr$ shape or $X$ is a $\mathrm{W}$-rectangle,
then $\R'$ is obtained from $\R\in R^s_{n}(\td)$ by rule $(**)$.
In both cases $\R$ obtained from $\R'$ by the ($\mathrm{NE}$) \textit{corner rectangle deletion},
and $\R'$ from $\R$ by the ($\mathrm{NE}$) \textit{corner rectangle insertion}~\cite{AckermanBarequetPinter2006bij,Hong2000}.
Thus, we refer to constructing a~rectangulation from the size-$1$ rectangulation by these succession rules 
as \textit{constructing by $\mathrm{NE}$ insertion}.

Thus, T1 is a generating tree for $R^s(\td)$.
Since it is also a generating tree for $I^{(6)}$, for $I^{(7)}$, and for $I^{(8)}$, the theorem follows.
\end{proof}

Let $\tau^{(7)}:R^s(\td) \to I^{(7)}$ be the bijection that  
respects the succession rules of T1 for these two classes,
that is: $\tau^{(7)}(\R) = e$ if and only if $\R$ is obtained from the size-$1$ rectangulation 
and $e$ is obtained from the size-$1$ inversion sequence by the same sequence of succession rules.
Comparing how the corresponding succession rules of the generating tree T1
act on elements of $I^{(7)}$ and of $R^s(\td)$,
it is easy to derive a \textit{direct} description of $\tau^{(7)}$,
which modifies the bijection~$\tau \colon  R^w(\td) \to I(10)$ 
from the third proof of Theorem~\ref{thm:td_weak}. 
Let $\mathcal{R}\in R^s(\td)$, and let $\bar e=\tau(\mathcal{\bar R})$,
where $\bar \R$ is the weak rectangulation corresponding to $\R$. 
Since $\bar e$ is non-descending, it belongs to $I^{(7)}$, and its active areas are defined as above.
To account for the shufflings along vertical segments in $\mathcal{R}$, we modify $\bar e$ as follows:
For every~$k$, $2 \leq k \leq t$, consider the neighboring rectangles of the vertical segment $s_k$,
and if the~$\mathrm{SW}$-corner of the $i$-th (bottom to top) right neighbor
touches the $j$-th (bottom to top) left neighbor, we decrease the value of
$\bar e_{a_k+i-1}$ (the $i$-th element of the $k$-th plateau) by $j-1$.
Since the jump from the $(k-1)$-th to the $k$-th plateau of $\bar e$
is the number of rectangles that neighbor $s_k$ on the left,
the new value is larger than $\bar e_{a_{k-1}}$ --- that is,
the modified value still belongs to the $k$-th active area.
Finally, it is readily seen that the modified elements of every active area are weakly decreasing.

To visualize this construction, we label, for every $s_k$,
the left neighboring rectangles by $a, b, c, \ldots$ top to bottom, 
and the right neighboring rectangles by $1, 2, 3, \ldots$  bottom to top, 
see Figure~\ref{fig:tau_strong1}(a).
In the plot of $\bar e$ we label, for every active area,
its rows $a, b, c, \ldots$ from bottom to top,
and its columns by $1, 2, 3, \ldots$ from left to right.
Then~we shift the dots of the plateau so that we have, 
with respect to this local labeling, the dot $(v,w)$
if and only if 
the \textrm{SW}-corner of the right neighbor of $s_k$ labeled $v$
lies on the boundary of the left neighbor of $s_k$ labeled~$w$.
Then~$\tau^{(7)}(\R)$ is the inversion sequence $e$ obtained in this way,
see Figure~\ref{fig:tau_strong1}(b) for an example. 
\begin{figure}[h]
\centering
\includegraphics[scale=0.9]{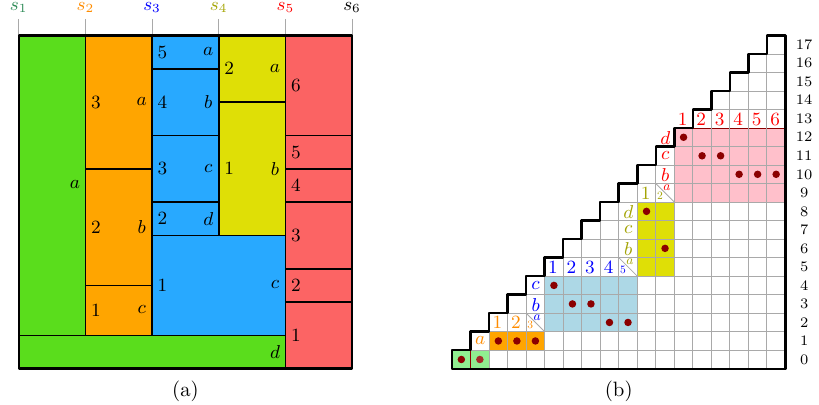}
\caption{Illustration to Theorem~\ref{thm:strong1}. 
(a) Rectangulation $\mathcal{R} \in R^s(\td)$. 
(b) $\tau^{(7)}(\R) \in I(010,101,120,201)$.}
\label{fig:tau_strong1}
\end{figure}

\smallskip

For bijections $\tau^{(8)} \colon  R^s(\td) \to I^{(8)}$
and $\tau^{(6)} \colon  R^s(\td) \to I^{(6)}$
we apply modifications of active areas as shown in Figure~\ref{fig:str1}.
The visualization $\tau^{(7)}$ in terms of $(v,w)$
can be adjusted accordingly, we omit the details.
Figure~\ref{fig:tau_strong2} shows $\tau^{(8)}(\R)$
and $\tau^{(6)}(\R)$ for the same rectangulation $\R$ as in 
Figure~\ref{fig:tau_strong1}.

\begin{figure}[h]
\centering
\includegraphics[scale=0.9]{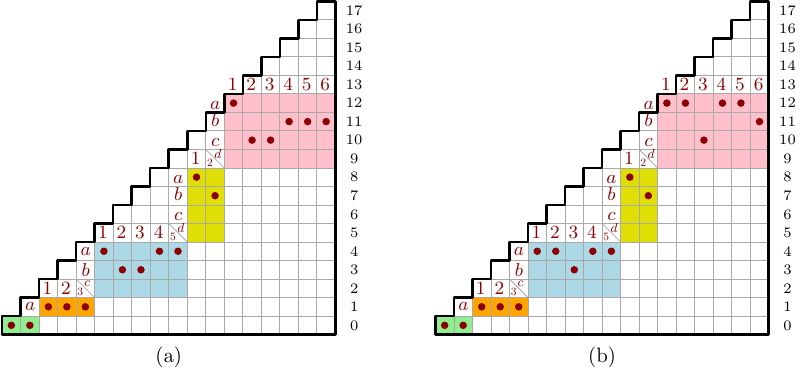}
\caption{Illustration to Theorem~\ref{thm:strong1}. 
(a) $\tau^{(8)}(\R) \in I(010,110,120,210)$. 
(b) $\tau^{(6)}(\R) \in I(010,100,120,210)$.
}
\label{fig:tau_strong2}
\end{figure}

Finally, we observe which statistics in inversion sequences
correspond to the numbers of rectangles that touch the sides of a rectangulation,
under all three bijections $\tau^{(6)}$, $\tau^{(7)}$, $\tau^{(8)}$.
These results are directly seen from the explicit description of the bijections, and we omit the proof.
\begin{prop}\label{thm:stat1}
Let $\R \in R^s(\td)$, and let $e$ be the image of $\R$ under $\tau^{(6)}$, $\tau^{(7)}$, or $\tau^{(8)}$. Then:
\begin{itemize}
\item The number of rectangles of $\R$ that touch $\mathrm{N}$ is the number of left-to-right maxima of $e$.
\item The number of rectangles of $\R$ that touch $\mathrm{E}$ is the bounce of $e$.
\item The number of rectangles of $\R$ that touch $\mathrm{S}$ is the number of high elements in $e$.
\item The number of rectangles of $\R$ that touch $\mathrm{W}$ is the number of $0$ elements in $e$.
\end{itemize}
\end{prop}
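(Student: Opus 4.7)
The plan is to first verify the four correspondences for the underlying \emph{weak} rectangulation $\bar{\R}$ via the bijection $\tau$ of Proof~3 of Theorem~\ref{thm:td_weak}, and then check that each of $\tau^{(6)}, \tau^{(7)}, \tau^{(8)}$ preserves all four statistics. Throughout, denote by $s_1=\mathrm{W}, s_2,\ldots,s_t, s_{t+1}=\mathrm{E}$ the vertical sides and internal vertical segments of $\R$. Recall from Proof~3 that the $k$-th plateau of $\bar e=\tau(\bar\R)$ has length equal to the number of right neighbors of $s_k$, and the jump into it has height equal to the number of left neighbors of $s_k$.

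\emph{Weak counts.} (i) Since $\td$ is avoided, each $s_k$ with $k\le t$ reaches $\mathrm{N}$, so its topmost right neighbor is an $\mathrm{N}$-rectangle and every $\mathrm{N}$-rectangle arises this way exactly once; hence $\R$ has $t$ $\mathrm{N}$-rectangles, matching the $t$ left-to-right maxima $\bar e_{a_1}<\cdots<\bar e_{a_t}$ of $\bar e$. (ii) Each rectangle is a left neighbor of a unique $s_k$ with $k\ge 2$, so $n$ equals the total number of left neighbors of $s_2,\ldots,s_{t+1}$; the maximum $M$ of $\bar e$ equals the total jump height $\sum_{k=2}^t\#\{\text{left nbrs.\ of }s_k\}=n-\#\{\mathrm{E}\text{-rectangles}\}$, so the bounce $n-M$ equals the number of $\mathrm{E}$-rectangles. (iii) The L-label of any right neighbor of $s_k$ is the number of rectangles strictly left of $s_k$, whereas $a_k-1$ counts all rectangles preceding the bottommost right neighbor of $s_k$ in the $\mathrm{SW}$--$\mathrm{NE}$ order; these two numbers agree iff nothing lies strictly below that rectangle, which --- since $\td$-avoidance forces the bottom endpoint of $s_k$ to be either on $\mathrm{S}$ or at a $\tu$ joint --- happens iff $s_k$ reaches $\mathrm{S}$, i.e.\ iff the bottommost right neighbor of $s_k$ is an $\mathrm{S}$-rectangle. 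Any non-initial index $j$ in active area $k$ satisfies $\bar e_j=\bar e_{a_k}\le a_k-1<j-1$, so non-starts are never high. (iv) The first plateau consists of all $\mathrm{W}$-rectangles and carries only zero values by Condition~A; every element of an active area $k\ge 2$ is strictly greater than $\bar e_{a_{k-1}}\ge 0$, hence nonzero.

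\emph{Strong counts.} The transfer from $\bar e$ to $e=\tau^{(\star)}(\R)$ for $\star\in\{6,7,8\}$ rests on the fact that each modification preserves the value $e_{a_k}$ at the first column of every active area, and that the all-zero first active area is stable. For $\tau^{(7)}$: in the $\tu$ case of~(iii) the $\mathrm{SW}$-corner of the bottommost right neighbor of $s_k$ coincides with the bottom-right corner of the bottommost left neighbor, so $i=j=1$ and the decrement is zero, while if $s_k$ reaches $\mathrm{S}$ the corner lies on $\mathrm{S}$ and again no decrement applies. For $\tau^{(8)}$ the reflection explicitly excludes the first column, and for $\tau^{(6)}$ the replacement rule fixes the maximum value $e_{a_k}$ of the active area. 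These facts transfer (i), (ii), and (iv) verbatim; for~(iii), every element in active area $k$ remains bounded by $e_{a_k}$, so $e_j\le e_{a_k}\le a_k-1<j-1$ for $j>a_k$ and no new high element appears.

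The main obstacle is the equivalence in~(iii): showing that $\bar e_{a_k}$ is high exactly when $s_k$ reaches $\mathrm{S}$ requires a careful analysis of the bottom endpoint of $s_k$, where $\td$-avoidance forces the only non-$\mathrm{S}$ alternative to be a $\tu$ joint. The same case distinction then gives the zero decrement that carries the high/non-high status through $\tau^{(7)}$ and, via the transformations described around Figure~\ref{fig:str1}, through $\tau^{(8)}$ and $\tau^{(6)}$. Everything else is a short bookkeeping check using Conditions~\textbf{A}, \textbf{B1}, \textbf{B2}, \textbf{B3}.
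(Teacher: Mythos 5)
Your proof is correct, and it follows exactly the route the paper intends: the paper omits the argument with the remark that the claims "are directly seen from the explicit description of the bijections," and your write-up simply supplies those details — the plateau/jump bookkeeping for $\bar e=\tau(\bar\R)$ plus the observation that the first element of every active area (and hence the left-to-right maxima, the maximum, the zero set, and the high/non-high status) is untouched by the passages to $\tau^{(6)},\tau^{(7)},\tau^{(8)}$. In particular your analysis of when $\bar e_{a_k}$ is high (the bottom endpoint of $s_k$ on $\mathrm{S}$ versus at a $\tu$ joint) and of the zero decrement at $i=j=1$ is exactly the verification the paper leaves to the reader.
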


\subsubsection{$\td$-avoiding strong rectangulations: bijection with the class of inversion sequences $I(011,201)$}
\label{sec:td_strong2}

In this section we prove that $R^s(\td)$ is equinumerous to $I(011, 201)$.
We use $R^s(\tu)$ instead of $R^s(\td)$, since this leads to a particularly 
clear visual description.
Similarly to the previous section, we first prove the result by considering a generating tree T2
(different from T1) given by Pantone~\cite{Pantone2024} for $I(011, 201)$ 
and showing that it can be also regarded as a generating tree for $R^s(\tu)$. 
Then we provide an explicit size-preserving bijection between $R^s(\tu)$ and $I(011, 201)$.

\medskip

In~\cite[Section~6.5]{Pantone2024}, Pantone gave the following generating tree T2 for $I(011,201)$:

\begin{tcolorbox}[breakable,no shadow,empty,enhanced,]
\textbf{Generating Tree T2~\cite[Section~6.5]{Pantone2024}}
\[
\begin{array}{rrclccc}
\mathsf{Root:} & \  (1,0). & & & & & \\
\mathsf{Succession \ rules:}&(k, \ell) \ & \longrightarrow  & (1,k+\ell-1), \ (2,k+\ell-2),  \ \ldots, \ (k,\ell); & & (*)\\
&& & (k+1,0), \ (k+1, 1), \ \ldots, \ (k+1, \ell-1); && (**) \\
&&& (k+1,0). && (***)
\end{array}
\]
\end{tcolorbox}
In this case, $k$ is the bounce of $e \in I_n(011,201)$, and 
$\ell$ is the number of 
admissible values which are greater than $0$ and smaller than $M$, the maximum value of~$e$.
The rule~$(*)$ represents adding a new maximum.
The~rule~$(**)$ represents adding an admissible value 
greater than $0$ and smaller than $M$
(since the patterns $011$ and $201$ are avoided, 
such a new value is necessarily smaller than the final element of $e$).
The rule $(***)$ represents adding a $0$ element. 
(This explanation is adapted from~\cite{Pantone2024}.)
See Figure~\ref{fig:gt2is} for illustration;
white dots represent all the admissible values that can be added to $e$,
and the pairs to the right of corresponding black dots show which type is obtained if this admissible value is added. 
Note that all the values from $M+1$ to $n$ are always admissible for $(*)$,
and $0$ is always admissible for $(***)$.
A~precise characterization of elements admissible for $(**)$
(denoted by $v_1, v_2, \ldots, v_\ell$ in Figure~\ref{fig:gt2is})
will be provided in Proposition~\ref{thm:admissible2}.

\begin{figure}[h]
\begin{center}
\includegraphics[scale=1]{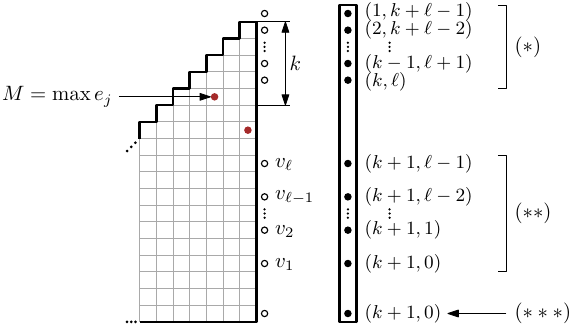}
\caption{Illustration for the succession rules of T2 for $I(011,201)$.}
\label{fig:gt2is}
\end{center}
\end{figure}

Before dealing with rectangulations, we introduce certain geometric relations.
If $X$ and $Y$ are two segments, or two rectangles, or one segment and one rectangle,
we say that one of them \textit{lies higher than / lies lower than / lies to the left of / lies to the right of} the other if there is a horizontal or vertical line that (weakly) separates them accordingly.
(Note that these relations are different from the relations \textit{above}, \textit{below}, etc., used earlier.)

\begin{theorem}\label{thm:strong2}
For every $n\geq 1$ we have $|R^s(\tu)| = |I(011, 201)|$.
\end{theorem}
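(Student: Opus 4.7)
The plan is to extend the generating-tree strategy of Theorem~\ref{thm:strong1}: I would show that the tree T2 of Pantone, which generates $I(011,201)$, is \emph{also} a generating tree for $R^s(\tu)$. Concretely, I want to choose statistics $(k,\ell)$ on $R^s_n(\tu)$ such that the size-$1$ rectangulation has type $(1,0)$, and such that a bijective corner-rectangle insertion produces children whose types match the three succession rules $(*)$, $(**)$, $(***)$ of T2.

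First I would fix the insertion direction. Since every vertical segment in a $\tu$-avoiding rectangulation touches the bottom side $\mathrm{S}$, the natural upper corner to split is $\mathrm{NE}$; as in the proof of Theorem~\ref{thm:strong1}, only the $\mathrm{SW}$-corner of the new $\mathrm{NE}$-rectangle can be non-trivial, and its local configuration governs which succession rule is used. Matching the bounce on the inversion-sequence side, I would let $k$ be the number of $\mathrm{N}$-rectangles of $\R$. For $\ell$ I would count horizontal segments incident to a carefully chosen part of the top boundary, with the goal that under an insertion which ``pushes left'' $j$ of the current $\mathrm{N}$-rectangles the new $\ell$ becomes $k+\ell-j$ rather than merely $j-1$. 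This suggests $\ell$ should track segments which remain accessible after such a merger --- probably the horizontal segments whose right endpoint lies on the left side of the $\mathrm{NE}$-rectangle, possibly counted with a shift.

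With $(k,\ell)$ in hand, I would classify the children of $\R$ of type $(k,\ell)$ into three groups and read off their new types:
\begin{itemize}
\item The $k$ \emph{push-left} insertions, pulling the new $\mathrm{NE}$-rectangle leftwards past $j$ of the current $\mathrm{N}$-rectangles for $1 \leq j \leq k$, should realize rule $(*)$ and produce the types $(j, k+\ell-j)$.
\item The $\ell$ \emph{short-segment} insertions, introducing a new vertical segment strictly shorter than the left side of the current $\mathrm{NE}$-rectangle and hence not reaching $\mathrm{S}$, should realize rule $(**)$ and produce the types $(k+1, 0), (k+1, 1), \ldots, (k+1, \ell-1)$.
\item The unique \emph{full-cut} insertion, introducing a new vertical segment that runs all the way down to $\mathrm{S}$, should realize rule $(***)$ and produce the type $(k+1, 0)$.
\end{itemize}
For bijectivity I would do a case analysis on the $\mathrm{SW}$-corner $P$ of the $\mathrm{NE}$-rectangle of any $\R' \in R^s_{n+1}(\tu)$: $\tu$-avoidance forces $P$ either to lie on $\mathrm{S}$ (rule $(***)$), to be a joint of shape $\tu$ (rule $(*)$, with $j$ determined by how many $\mathrm{N}$-rectangles it separates), or to be a joint of shape $\tr$ (rule $(**)$, with the child index determined by how many horizontal segments lie above $P$ on the left side of the $\mathrm{NE}$-rectangle). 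In every case the parent $\R$ is recovered by $\mathrm{NE}$-corner deletion.

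The main obstacle will be pinning down $\ell$. The appearance of $k+\ell-1$ --- rather than a purely local quantity --- at the top of rule $(*)$ means $\ell$ must behave compatibly with the bounce $k$ under horizontal mergers, which is delicate and is the first place where a wrong choice would be exposed. If my initial guess fails I would consult Proposition~\ref{thm:admissible2} (promised later in the section, characterizing the admissible values for rule $(**)$ in $I(011,201)$), translate its conditions into a combinatorial description on the rectangulation side, and iterate. Once T2 is established for $R^s(\tu)$, the explicit size-preserving bijection to $I(011,201)$ follows by matching succession rules, in full analogy with $\tau^{(7)}$ of the previous subsection.
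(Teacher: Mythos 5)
Your high-level strategy is the paper's: show that Pantone's tree T2 is also a generating tree for $R^s(\tu)$, with $k$ equal to the number of $\mathrm{N}$-rectangles, and conclude by matching succession rules. Your realizations of $(*)$ and $(***)$ are essentially correct (up to an index reversal in $(*)$: pushing down $j$ of the $\mathrm{N}$-rectangles leaves $k-j+1$ of them, not $j$). But there is a genuine gap in rule $(**)$. In $R^s(\tu)$ every vertical segment must have its lower endpoint on $\mathrm{S}$ — otherwise that endpoint is a $\tu$ joint — so your ``short-segment insertion'' of ``a new vertical segment \ldots\ not reaching $\mathrm{S}$'' immediately leaves the class. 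The children produced by $(**)$ cannot be obtained by any local corner insertion. The paper's operation instead selects an existing \emph{active} $\td$ joint (one whose horizontal segment has its right endpoint on $\mathrm{E}$), pushes the right part of that horizontal segment slightly downward, and extends the joint's \emph{existing} vertical segment — which already reaches $\mathrm{S}$ — upward to $\mathrm{N}$, truncating the horizontal segments it crosses; the new $\mathrm{NE}$-rectangle is bounded by $\mathrm{N}$, $\mathrm{E}$ and these two modified segments. This global ``segment-extension'' step is the key idea your proposal is missing, and without it the induction does not go through.

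Relatedly, your candidate for $\ell$ (horizontal segments whose right endpoint lies on the left side of the $\mathrm{NE}$-rectangle) is the statistic from the T1 argument for $R^s(\td)$ and is not the right one here: the correct $\ell$ is the number of active $\td$ joints as just defined. With that choice, pushing down the $j$ rightmost $\mathrm{N}$-rectangles creates exactly $j-1$ new active $\td$ joints on the bottom side of the new rectangle, giving type $(k-j+1,\ell+j-1)$, and operating on the $i$-th active joint from the right leaves exactly $i-1$ joints active (the others lose contact with $\mathrm{E}$ when their horizontal segments are truncated), giving $(k+1,i-1)$. Finally, your bijectivity trichotomy includes the case that the $\mathrm{SW}$-corner of the $\mathrm{NE}$-rectangle ``is a joint of shape $\tu$'', which cannot occur in $R^s(\tu)$; that corner is always a $\tr$ joint or lies on the boundary, and the actual dichotomy between $(*)$ and $(**)$ is whether the left side of the $\mathrm{NE}$-rectangle has any left neighboring segments. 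You do flag that your choice of $\ell$ might need revision, but as written the construction does not yet yield a proof.
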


\begin{proof} We say that a $\td$ joint is \textit{active} if the right endpoint of its
horizontal segment lies on $\mathrm{E}$.

We show that T2 is a generating tree for $R^s(\tu)$,
where the interpretation of $k$ and $\ell$ is as follows:
\begin{itemize}
\item $k$ is the number of $\mathrm{N}$-rectangles of $\R$,
\item $\ell$ is the number of active $\td$ joints.
\end{itemize}

The rectangulation of size $1$ has type $(1,0)$. 
Given a rectangulation $\R \in R^s_n(\tu)$ of type $(k, \ell)$, 
we can obtain a rectangulation $\R' \in R^s_{n+1}(\tu)$ 
by inserting a new $\mathrm{NE}$-rectangle 
in one of the following ways (see Figure~\ref{fig:gt2re}): 
\begin{itemize}
\item Inserting a new $\mathrm{NE}$-rectangle $X$ while pushing downwards 
the $j$ right-most $\mathrm{N}$-rectangles, where $1 \leq j \leq k$,
so that all the horizontal segments that exist in $\R$
are lower than the bottom side of $X$ (in particular, $X$~has no left neighboring segments).
This decreases the number of $\mathrm{N}$-rectangles by $j-1$
and increases the number of active $\td$ joints by $j-1$; hence,
$\R'$ has the type $(k-j+1, \ell+j-1)$. This yields the succession rule $(*)$.
\vspace{-70pt}
\item 
\begin{minipage}[t]{0.52\textwidth} 
Inserting a new $\mathrm{NE}$-rectangle $X$ by the following operation:
Given an active $\td$ joint, 
let $v$ be its vertical segment and
let~$r$ be the right part of its horizontal segment (see the image).
Push $r$ slightly downwards so that all the horizontal segments that were lower than $r$ in $\R$
are still lower than the shifted $r$ in $\R'$,
shorten accordingly the vertical segments whose upper endpoints lie on $r$, 
and extend $v$ upwards to $\mathrm{N}$ while truncating 
\end{minipage}
\hspace{8pt}
\begin{minipage}{0.3\textwidth}
\vspace{80pt}
\includegraphics[scale = 0.95]{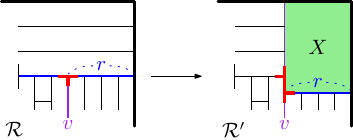}
\end{minipage}

\vspace{-2pt}
from the right the horizontal segments that it meets. 
The new rectangle $X$ is bounded by $\mathrm{N}$, $\mathrm{E}$, and (modified) $r$ and $v$.
If we perform this operation on the $i$-th from the right active $\td$ joint,
where $1 \leq i \leq \ell$,
this increases $k$ by $1$ and sets $\ell$ to be $i-1$, since
the horizontal segments of active $\td$~joints to the right of the chosen one still reach $\mathrm{E}$,
but those of active $\td$ joints to the left of the chosen one are truncated.
Therefore, the new rectangulation $\R'$ has the type $(k+1, i-1)$, and we obtain the succession rule $(**)$.
\item Inserting a new rectangle that extends from \textrm{N} to \textrm{S} at the right of $\R$.
The new rectangulation $\R'$ has the type $(k+1, 0)$. This yields the succession rule $(***)$.
\end{itemize}

\begin{figure}[h]
\begin{center}
\includegraphics[width=\textwidth]{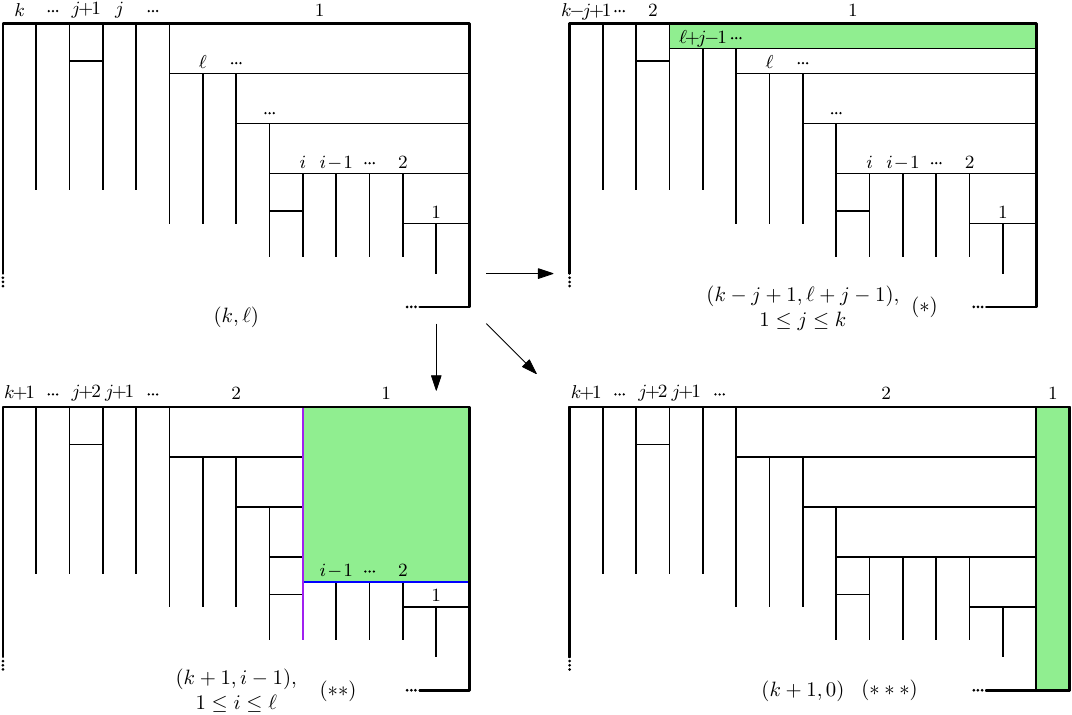}
\caption{Illustration for the succession rules of T2 for $R^s(\tu)$.}
\label{fig:gt2re}
\end{center}
\end{figure}

These succession rules ensure that every rectangulation in $R^s(\tu)$ is generated precisely once.
To see that, consider $\R' \in R^s_{n+1}(\tu)$, and let $X$ be its $\mathrm{NE}$-rectangle.
Note that the $\mathrm{SW}$-corner of $X$ has the $\tr$ shape, unless the corner lies on the boundary of $R$.
If $X$ is not an $\mathrm{S}$-rectangle and the left side of $X$ has no neighboring segment on the left, 
then $\R'$ is obtained from $\R\in R^s_{n}(\tu)$ by rule $(*)$,
whereas $\R$ is the rectangulation obtained from $\R'$ by the corner rectangle deletion.
If $X$ is not an $\mathrm{S}$-rectangle and the left side of $X$ has at least one neighboring segment on the left, 
then $\R'$ is obtained from $\R\in R^s_{n}(\tu)$ by rule $(**)$.
To obtain $\R$ 
from $\R'$ in this case, we shift the bottom side of $X$ upwards
(while respective extending of its bottom neighbors) until it aligns with the lowest left neighboring segment
and, thus, a $\crs$ shape is created; 
we cut off the portion of the vertical segment from the crossing point upwards;
and if there are some horizontal segments whose right endpoints 
were on the vertical segment which is now cut off, we extend them to the right so that they touch~$\mathrm{E}$. 
Finally, if $X$ is an $\mathrm{S}$-rectangle
then $\R'$ is obtained from $\R\in R^s_{n}(\tu)$ by rule $(***)$,
whereas $\R$ is the rectangulation obtained from $\R'$ by deleting the right-most rectangle.
Since at every step of constructing a rectangulation we insert a new $\mathrm{NE}$-rectangle,
we refer to it as \textit{constructing by $\mathrm{NE}$ insertion}
(although it is different from the ``classical'' insertion of a $\mathrm{NE}$-rectangle when $(**)$ is applied).

\medskip

Thus, T2 is a generating tree for $R^s(\tu)$.
Since it is also a generating tree for $I(011, 201)$, the theorem follows.
\end{proof}

Let $\sigma\colon R^s(\tu) \to I(011, 201)$ be the bijection that respects the succession rules of T2 for these two classes.
Next we prove two propositions that eventually lead to an explicit description of $\sigma$.

\smallskip

Let $e \in I(011, 201)$, and let $M$ be the maximum value of $e$. 
Since $011$ is forbidden, all the non-$0$ admissible values are not values of $e$.
As mentioned above, the values admissible for $(*)$ are precisely the numbers from $M+1$ to $n$, 
and the unique value admissible for $(***)$ is $0$. 
For the values admissible for $(**)$ we know that they are greater than $0$ (which is the lowest right-to-left minimum of $e$)
and smaller than $e_n$ (which is the highest right-to-left minimum of $e$).
In order to give an explicit description of $\sigma$, we need a more precise
information concerning the position of values admissible for $(**)$ between the right-to-left minima of~$e$.

Let $(0=) \ e_{b_0}<e_{b_1}<\ldots<e_{b_u} (= e_n)$ be the right-to-left minima of $e$.
Denote by $\ell_i$, $1 \leq i \leq u$, the number of values admissible for $(**)$ that lie between $e_{b_{i-1}}$ and $e_{b_{i}}$.
Then we have $\ell_1+\ell_2+\ldots+\ell_{u} = \ell$, where $\ell$ is as in the definition of T2.
Then we can provide a precise description of the values admissible for $(**)$.
\begin{prop}\label{thm:admissible2}
For every $1 \leq i \leq u$, the values admissible for $(**)$ that lie between $e_{b_{i-1}}$ and $e_{b_{i}}$
are precisely $e_{b_{i}}-\ell_i, e_{b_{i}}-\ell_i+1, \ldots, e_{b_{i}}-1$.
In other words, these are precisely the $\ell_i$ values just below $e_{b_{i}}$.
Moreover, $e_{b_{i}}-\ell_i-1$ is a value already used in $e$, 
and hence, the values admissible for $(**)$ are precisely the values just below $e_{b_{i}}$,
which are not used in $e$.
\end{prop}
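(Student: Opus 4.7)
The plan is to reduce admissibility of $v$ to a concrete numerical condition, and then read off the claimed interval. Since $e_1 = 0$ and $011$-avoidance forces all positive values of $e$ to be pairwise distinct, appending $v$ preserves $011$-avoidance exactly when $v \notin \{e_1,\ldots,e_n\}$, and preserves $201$-avoidance exactly when there is no pair $i' < j' \le n$ with $e_{i'} > v > e_{j'}$. I will pivot the whole argument around $e_{b_i}$, since $v < e_{b_i}$ while all values at positions $\ge b_i$ are at least $e_{b_i}$.

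The key lemma I would prove first is the following: if a value $w$ with $e_{b_{i-1}} < w < e_{b_i}$ occurs in $e$ at some position $p$, then $p \le b_{i-1}$. I argue by descent. Assume $p > b_{i-1}$; since $w$ differs from both $e_{b_{i-1}}$ and $e_{b_i}$, and values at positions $\ge b_i$ are $\ge e_{b_i} > w$, we must have $p \in (b_{i-1}, b_i)$. Then $p$ is not an RL-min of $e$, so some $q > p$ satisfies $e_q \le e_p = w$; the $011$-avoidance together with $w > 0$ upgrades this to $e_q < w$, and the RL-min property together with $q < b_i$ (which follows, since $q \ge b_i$ would give $e_q \ge e_{b_i} > w$) forces $e_q \in (e_{b_{i-1}}, w)$. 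Iterating produces positions inside $(b_{i-1}, b_i)$ with strictly decreasing values inside $(e_{b_{i-1}}, e_{b_i})$; the descent must terminate at a position that is itself an RL-min of $e$, contradicting the fact that $b_{i-1}$ and $b_i$ are consecutive RL-mins. I expect this lemma to be the main obstacle, since it is the only place where the combinatorial structure of $e$ genuinely enters; the remaining arguments are bookkeeping.

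With the lemma in hand, both directions of the characterization are then routine. For the forward direction, suppose some $w \in \{e_1,\ldots,e_n\}$ lies in $(v, e_{b_i})$; the lemma places it at a position $p < b_{i-1}$ (strictly, since $e_{b_{i-1}} < v < w$), and then the triple $(p, b_{i-1}, n+1)$ with values $(w, e_{b_{i-1}}, v)$ is a $201$-occurrence in $e$ extended by $v$, so $v$ is inadmissible. For the reverse direction, assume $v \notin e$ and $(v, e_{b_i}) \cap \{e_1,\ldots,e_n\} = \emptyset$, and suppose for contradiction that positions $i' < j' < n+1$ witness a $201$ after appending $v$, with $e_{i'} > v > e_{j'}$. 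The emptiness assumption forces $e_{i'} \ge e_{b_i}$. If $i' \ge b_i$, then $j' > i' \ge b_i$ gives $e_{j'} \ge e_{b_i} > v$, contradicting $e_{j'} < v$. Otherwise $i' < b_i$; the equality $e_{i'} = e_{b_i}$ is excluded by the $011$-distinctness of positive values, so $e_{i'} > e_{b_i}$, and $j' < b_i$ (else the same contradiction reappears), whence $(i', j', b_i)$ is already a $201$ in $e$, contradicting $e \in I(011, 201)$.

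Having characterized the admissible $v \in (e_{b_{i-1}}, e_{b_i})$ as exactly those for which $v, v+1, \ldots, e_{b_i} - 1$ are all absent from $\{e_1, \ldots, e_n\}$, I obtain the admissible set as the top contiguous run $\{e_{b_i} - \ell_i, \ldots, e_{b_i} - 1\}$. Maximality of $\ell_i$ implies that $e_{b_i} - \ell_i - 1$ either belongs to $e$, or else equals $e_{b_{i-1}}$ (which itself belongs to $e$); in either case it is a value already used in $e$, giving the \emph{moreover} clause.
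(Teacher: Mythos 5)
Your proof is correct and follows essentially the same route as the paper's: both arguments hinge on the fact that any value of $e$ lying strictly between two consecutive right-to-left minima must occur before the earlier one, and then manufacture the relevant $201$ occurrences (using $e_{b_{i-1}}$ as the ``0'' for inadmissibility, and $e_{b_i}$ as the ``1'' to push a would-be $201$ back into $e$ itself). The only difference is packaging: the paper proves upward-closure of admissibility on the interval ($a-1$ admissible $\Rightarrow a$ admissible) and asserts the positional fact without justification, whereas you prove the full iff-characterization and explicitly supply the descent argument for that fact --- which is arguably the one genuinely non-obvious step.
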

\begin{proof}
As mentioned above, every value admissible for $(**)$ is not used in $e$, since the pattern $011$ is avoided.
All we need to prove is that it cannot happen that $e_{b_{i-1}}<a-1<a<e_{b_{i}}$ such that $a-1$ is admissible and $a$ is not admissible.
Assume for contradiction that this happens.
If $a$ is a value of $e$, then we have $a=e_j$ for some $j < b_{i-1}$, since $e_{b_{i-1}}$ and $e_{b_{i}}$ are two adjacent right-to-left minima.
However, in this case $(a)(e_{b_{i-1}})(a-1)$ is an~occurrence of $201$, contradicting the assumption that $a-1$ is admissible.
If $a$ is not a value of $e$, then it forms an occurrence of $201$ with some earlier two elements of $e$.
However, in this case $a-1$ forms an occurrence of $201$ with the same two elements.

Similarly, if $e_{b_{i}}-\ell_i-1$ not used in $e$, then, since it is not admissible,
it forms an occurrence of $201$ with some earlier two elements.
However, then $e_{b_{i}}-\ell_i$ forms an occurrence of $201$ with the same two elements.
\end{proof}

Let $\R \in R^s_n(\tu)$. Label its rectangles by $X_1, X_2, \ldots, X_n$ according to the order 
in which they were inserted in the $\mathrm{NE}$-insertion.

\begin{prop}\label{thm:E}
Let $e = \sigma(\R) \in I_n(011, 201)$.
Rectangle $X_j$ of $\R$ is an $\mathrm{E}$-rectangle is and only if $e_j$ is a right-to-left minimum of $e$.
Moreover, if we denote such rectangles by $Y_0, Y_1, \ldots, Y_u$,
then the number of active $\td$-joints that lie on the bottom side of $Y_{\alpha}$
is equal to the number of values admissible for $(**)$ just below $e_{b_\alpha}$.
\end{prop}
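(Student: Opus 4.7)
The natural approach is induction on $n$ using the generating tree $T_2$ established in Theorem~\ref{thm:strong2}. The base case $n=1$ is immediate: the single rectangle is the unique $\mathrm{E}$-rectangle, the inversion sequence $(0)$ has $0$ as its unique right-to-left minimum, and both sides of part~(ii) are zero. For the inductive step, I assume the statement for some $\widetilde{\R} \in R^s_n(\tu)$ with $\widetilde{e} = \sigma(\widetilde{\R})$, and verify it for $\R \in R^s_{n+1}(\tu)$ obtained from $\widetilde{\R}$ by each of the three succession rules of $T_2$ (with $e = \sigma(\R)$ produced by the matching IS rule).

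Rules $(*)$ and $(***)$ are essentially direct. For rule $(*)$, the new $\mathrm{NE}$-rectangle $X_{n+1}$ is an $\mathrm{E}$-rectangle, and its bottom is the horizontal segment newly created above the $j$ pushed $\mathrm{N}$-rectangles; it carries exactly $j-1$ active $\td$-joints (one at each seam between consecutive pushed rectangles). All old $\mathrm{E}$-rectangles of $\widetilde{\R}$ retain contact with $\mathrm{E}$, and the active $\td$-joints on their bottoms are unaffected. On the IS side, $e_{n+1}=M+j$ is a new right-to-left minimum (being the new maximum), every old r-t-l minimum survives, and Proposition~\ref{thm:admissible2} yields exactly $j-1$ admissible values just below $M+j$, namely $M+1,\dots,M+j-1$. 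The admissible counts in ranges below old r-t-l minima are unchanged, because the new element is larger than every old value and so introduces no new $011$- or $201$-constraint affecting any $v$ below the old maximum. Rule $(***)$ is even simpler: $X_{n+1}$ is the unique $\mathrm{E}$-rectangle of $\R$ (its bottom lies on $\mathrm{S}$, so it has no $\td$-joints), while $e_{n+1}=0$ is the unique r-t-l minimum and has no admissible values below it.

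Rule $(**)$ is the substantive case. Applied to the $i$-th from right active $\td$-joint $T$, which by the inductive hypothesis lies on the bottom of some $\mathrm{E}$-rectangle $\widetilde{Y}_\beta$ of $\widetilde{\R}$, the matching IS step appends a specific admissible value $v \in (\widetilde{e}_{b_{\beta-1}}, \widetilde{e}_{b_\beta})$, which by Proposition~\ref{thm:admissible2} has the form $v = \widetilde{e}_{b_\beta} - s$ for some $s \in \{1,\dots,\ell_\beta\}$. Geometrically, extending the vertical segment of $T$ upward to $\mathrm{N}$ absorbs the right portions of $\widetilde{Y}_\beta, \widetilde{Y}_{\beta+1}, \dots, \widetilde{Y}_u$ into the new $X_{n+1}$, so these old $\mathrm{E}$-rectangles lose contact with $\mathrm{E}$, while $\widetilde{Y}_0,\dots,\widetilde{Y}_{\beta-1}$ are untouched. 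On the IS side, adding $v$ eliminates exactly the old r-t-l minima with values $\geq v$ and installs $v$ as the new maximal r-t-l minimum; this proves part~(i). For part~(ii), the bottoms of the surviving $\widetilde{Y}_0,\dots,\widetilde{Y}_{\beta-1}$ are unchanged, and their admissible-count matches follow from the same $011/201$-insensitivity argument as in rule $(*)$. On the bottom of the new $Y_\beta = X_{n+1}$ (the slightly-pushed-down segment $r$), the active $\td$-joints are precisely those that were on $r$ to the right of $T$ in $\widetilde{\R}$; I would count these as $\ell_\beta - s$, and then verify via Proposition~\ref{thm:admissible2} applied to $e$ that exactly $\ell_\beta - s$ admissible values lie just below $v$ in $e$ (namely $v-1,\dots,v-(\ell_\beta-s)$, bounded below by $\widetilde{e}_{b_\beta}-\ell_\beta-1 \in \widetilde{e} \subseteq e$).

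The main obstacle will be rule $(**)$, specifically establishing that the ``$i$-th from right'' global ordering on active $\td$-joints in $\widetilde{\R}$ corresponds to precisely the admissible value determined by the $i$-th child of $(k,\ell)$ in the IS version of $T_2$. This amounts to carrying, as an additional invariant of the induction, the structural compatibility that the left-to-right order of $\td$-joints on the bottom of each $\mathrm{E}$-rectangle $\widetilde{Y}_\beta$ matches the top-to-bottom order of admissible values in the block just below $\widetilde{e}_{b_\beta}$, and that stacking these blocks in order of $\beta$ produces the global ``from right'' ordering. Once this compatibility is in place, the geometric bookkeeping (which rectangles lose $\mathrm{E}$-contact, how many $\td$-joints remain on the new corner rectangle's bottom) becomes routine.
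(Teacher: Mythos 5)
Your proposal is correct and takes essentially the same route as the paper's proof: induction along the generating tree T2, handling the three succession rules case by case, with Proposition~\ref{thm:admissible2} supplying the count of admissible values in each block. The ``main obstacle'' you flag (that the right-to-left order of active $\td$ joints matches the bottom-to-top order of admissible values, block by block) is exactly the compatibility the paper also invokes, and it handles it the same way you propose --- as an ordering invariant carried through the induction.
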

\begin{proof}
We proceed by induction. 
Suppose that some succession rule of T2 produces $\R'\in R^s_{n+1}(\tu)$ from $\R \in R^s_{n}(\tu) $, 
and $e'  \in I_{n+1}(011, 201) $ from $e \in I_{n}(011, 201)$,
and the claim holds for $\R$ and $e$. 

\textbf{Case 1.}  If $\R'$ is obtained from $\R$ and $e'$ from $e$ by $(*)$, 
specifically by $(k,\ell) \to (k-j+1, \ell+j-1)$,
then $\R'$ has a new $\mathrm{E}$-rectangle $X_{n+1}$,
whose bottom side has $j-1$ $\td$ joints.
On the other hand, $e'$ has a new right-to-left minimum $e_{n+1} = M+j$ 
which yields $j-1$ values admissible for $(*)$, 
namely $M+j-1, M+j-2, \ldots, M+1$.
There~are no changes for existing $\mathrm{E}$-rectangles and their $\td$-joints,
and for existing right-to-left minima of $e'$ and values admissible for $(*)$.

\begin{figure}[htbp]
\begin{center}
\includegraphics[scale=1]{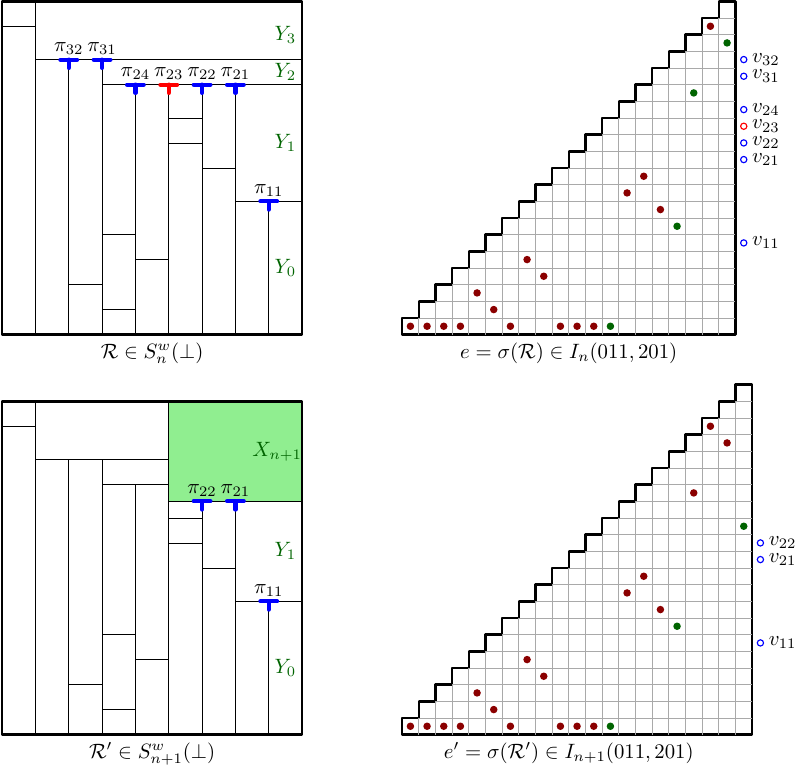} 
\end{center}
\caption{Illustration to the proof of Proposition~\ref{thm:E}, Case~2.}
\label{fig:sigmaproof}
\end{figure}

\textbf{Case 2.} Suppose $\R'$ is obtained from $\R$ and $e'$ from $e$ by $(**)$.
In $e$, let $v_{\alpha \beta}$ be
the $\beta$-th, bottom to top, value admissible for $(**)$ between $e_{\alpha-1}$ and $e_{\alpha}$,
where $\alpha = 1, 2, \ldots, u$ and $\beta = 1, \ldots, \ell_{\alpha}$. 
(In fact we know from Proposition~\ref{thm:admissible2} that $v_{\alpha \beta} = e_{b_\alpha} - 1 - (\ell_\alpha - \beta)$.)
In $\R$, let $\pi_{\alpha \beta}$ be
the $\beta$-th, right to left, active $\td$ joint on the bottom side of $Y_\alpha$,
where $\alpha = 1, 2, \ldots, u$ and $\beta = 1, \ldots, \ell_{\alpha}$.
(These parameters match for $\R$ and $e$ by the assumption.)

The active $\td$ joints are ordered so that
$\pi_{\alpha'\beta'}$ lies to the left of $\pi_{\alpha\beta}$
if and only if either $\alpha<\alpha'$ or $\alpha=\alpha'$ and $\beta<\beta'$.
From comparing the rule $(**)$ for $I(011, 201)$ and for $R^s(\tu)$ we know 
that the right-to-left ordering of active $\td$ joints in $\R$
corresponds to the bottom-to-top ordering of the values admissible for $(**)$ in $e$.
Therefore,~applying the rule $(**)$ such that $\pi_{\alpha\beta}$ is modified in $\R$
corresponds to adding the admissible value $v_{\alpha\beta}$ in $e$.
Then,~in $\R'$, 
the $\mathrm{E}$-rectangles are $Y_0, Y_1, \ldots, Y_{\alpha-1}, X_{n+1}$,
the active $\td$ joints of $Y_1, Y_2, \ldots, Y_{\alpha-1}$ are unchanged, 
and the active $\td$ joints of $X_{n+1}$ (which assumes the role of $Y_\alpha$ in $\R'$)
are $\pi_{\alpha1}, \pi_{\alpha2}, \ldots, \pi_{\alpha(\beta-1)}$.
And in $e'$
the~right-to-left minima are $e_{b_0}, e_{b_1}, \ldots, e_{b_{\alpha-1}}, v_{\alpha\beta}$,
the admissible for $(**)$ values just below $e_{b_1}, e_{b_2}, \ldots, e_{b_{\alpha-1}}$ are unchanged, 
and the admissible for $(**)$ values just below~$e_{\alpha}$
are $v_{\alpha1}, v_{\alpha2}, \ldots, v_{\alpha(\beta-1)}$.
See Figure~\ref{fig:sigmaproof} for an~illustration.

\textbf{Case 3.} Finally, if $\R'$ is obtained from $\R$ and $e'$ from $e$ by $(***)$, 
then $X_{n+1}$ is a unique $\mathrm{E}$-rectangle, and there are no active $\td$ joints on its bottom side;
and $e_{n+1}=0$ is a unique right-to-left minimum of $e$, and there are no admissible values below it.
\end{proof}

Next we define a labeling of the rectangles of $\R \in R^s(\tu)$ which will be used
for assigning the values to $e$ in the explicit definition of $\sigma$.
We begin with a partial order $\ph$ on the horizontal segments of $\R$.
For distinct horizontal segments $s$ and $t$, we first set $s \prec t$
if some point of $t$ is directly above some point of~$s$.
(This includes the possibility that either of these points is an endpoint of the respective segment.)
We define $\ph$ as the partial order obtained by taking the transitive closure of $\prec$. 
See Figure~\ref{fig:orders}(a), where the Hasse diagram of $\ph$ is shown by green lines.

\smallskip

\textit{Remark:} The partial order $\ph$ is an instance of \textit{heap of pieces} order, 
where the horizontal segments are seen as ``pieces'' with endpoints on fixed vertical rods.
The pieces are free to move vertically along the rods, as long as they do not touch
each other, also in their endpoints. Then the heap of pieces order
determines which of them are forced to be higher than others.
See~\cite{Krattenthaler2006, Viennot} for more information about heaps of pieces.

\smallskip

Next, we consider $\pl$, the linear extension of $\ph$ determined  by the following rule:
If $s$ and $t$ are independent in $\ph$, then we set $s \pl t$ if
$s$ lies to the left of $t$.
Equivalently, $\pl$ is obtained from $\ph$ iteratively by taking (and deleting) the left-most leaf
at every step.
(From the perspective of planar posets, $\pl$ is the \textit{left-most} linear extension of $\ph$.)
Every strong rectangulation $\R$ can be drawn so that for all such pairs $t$ lies higher than~$s$;
such a drawing will be referred to as a \textit{canonical drawing} of $\R$.
In the proof of Theorem~\ref{thm:strong2} we specified that when $(*)$ is applied, 
all the existing horizontal segments of $\R$ are lower than the bottom side of the new $\mathrm{NE}$-rectangle $X$;
and when $(**)$ is applied, all the horizontal segments that were lower than $s$ are lower than the bottom side of~$X$: a rectangulation obtained in this way is always canonical.
\begin{figure}[h]
\begin{center}
\includegraphics[scale=0.72]{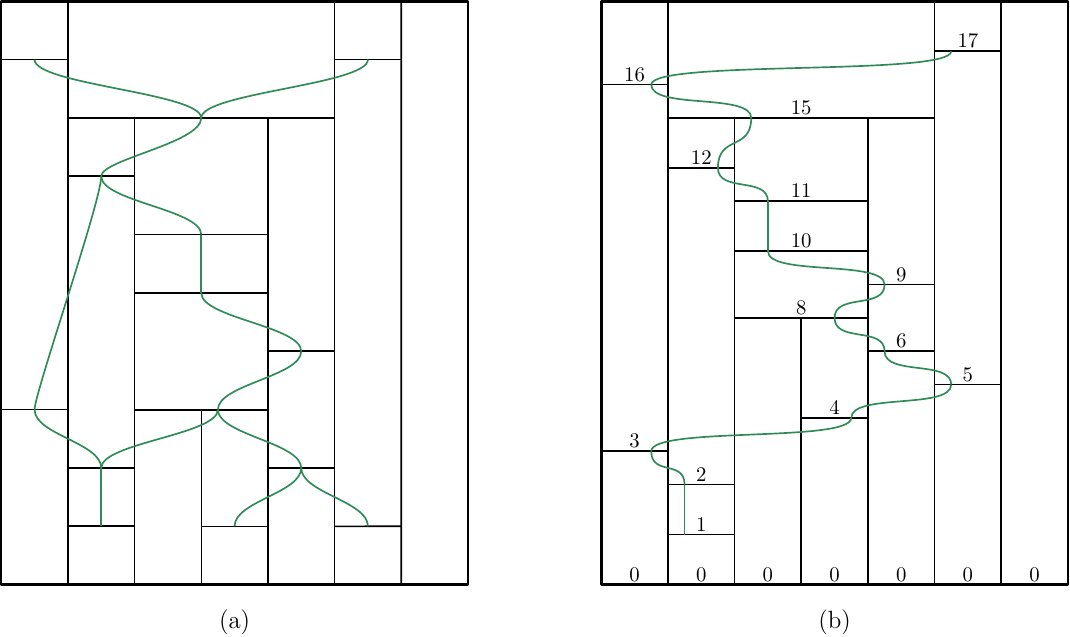} 
\caption{(a) The partial order $\ph$. 
(b) The linear order $\pl$, and the $\lambda$-labeling.}
\label{fig:orders}
\end{center}
\end{figure}

Given $\R \in R^s(\tu)$, we consider its canonical drawing,
and we label every horizontal segment $s$ by the number of rectangles of $\R$
that lie lower than $s$. This label will be denoted by $\lambda(s)$.
It is clear that $t$ lies higher than $s$ if and only if $\lambda(t) > \lambda(s)$.
Additionally, we define the $\lambda$-label of every edge of the bottom side of $\R$ to be $0$.
Note that if $t$ is the direct successor of~$s$ in $\pl$, then 
$\lambda(t)-\lambda(s)$ is the number of rectangles whose top edge is included in~$t$.
If $t$ is the lowest segment of $\R$, then 
$\lambda(t)$ is just the number of rectangles whose top edge is included in $t$.

Finally, we also define $\lambda$-labels for rectangles of $\R$.
Note that if a rectangle does not touch $\mathrm{S}$, 
then its lower edge is an entire segment.
The $\lambda$-label of a rectangle $X$ is defined to be identical to the $\lambda$-label of 
its lower edge, and denoted by $\lambda(X)$. 
In particular, we have $\lambda(X) = 0$ if and only if $X$ is an $\mathrm{S}$-rectangle.
See Figure~\ref{fig:strong2}(b): every label is the $\lambda$-label of the respective rectangle, 
and also of its lower edge.

\begin{theorem}\label{thm:sigma_explicit}
Let $\R \in R^s_n(\tu)$. 
Then $e = \sigma(\R)\in I_{n}(011, 201)$ is obtained by taking the $\lambda$-labels
of the rectangles of $\R$ in the order of their insertion 
by the succession rules of T2.
\end{theorem}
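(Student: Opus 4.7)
My plan is to prove the statement by induction on $n$, verifying at each step that the $\lambda$-label of the newly inserted rectangle $X_{n+1}$ equals the entry $e_{n+1}$ appended by the applied succession rule of T2, and simultaneously that the $\lambda$-labels of the previously-inserted rectangles $X_1,\ldots,X_n$ are preserved when passing from $\R$ to $\R'$. The base case $n=1$ is immediate, since the single rectangle has its bottom on $\mathrm{S}$, giving $\lambda = 0 = e_1$.

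For rule $(*)$ with parameter $j$, the construction in the proof of Theorem~\ref{thm:strong2} places the bottom of $X_{n+1}$ above every existing horizontal segment, so it is the maximum of~$\pl$ in~$\R'$. Its $\lambda$-label therefore counts all rectangles of $\R'$ except those whose top is~$\mathrm{N}$, namely the $k-j+1$ $\mathrm{N}$-rectangles; this yields $\lambda(X_{n+1})=(n+1)-(k-j+1)=n-k+j=M+j=e_{n+1}$. For rule $(***)$, $X_{n+1}$ is an $\mathrm{S}$-rectangle, so $\lambda(X_{n+1})=0=e_{n+1}$. In both cases no pre-existing horizontal segment changes its horizontal extent or its relative $\pl$-position, so every $\lambda(X_i)$ for $i\le n$ is preserved.

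The heart of the argument is rule $(**)$ applied at the joint $\pi_{\alpha\beta}$ (the $\beta$-th right-to-left active $\td$ joint on the bottom side $h$ of~$Y_\alpha$). By Propositions~\ref{thm:E} and~\ref{thm:admissible2} the new entry is $v_{\alpha\beta}=e_{b_\alpha}-\ell_\alpha-1+\beta$, and by induction $e_{b_\alpha}=\lambda(h)$. In~$\R'$, the segment $h$ splits into a left part $s_L$ and the pushed-down right part $s_R$, meeting at a shared endpoint on the extended $v$; by the endpoint clause in the definition of $\prec$ (the endpoint of $s_L$ on $v$ lies directly above that of $s_R$), we have $s_R\prec s_L$ in~$\ph$. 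Of the $\ell_\alpha+1$ bottom-neighbors of $h$ in $\R$, the $\beta$ lying to the right of $\pi_{\alpha\beta}$ now have top on $s_R$, while the remaining $\ell_\alpha+1-\beta$ have top on $s_L$. Applying the incremental relation $\lambda(t)-\lambda(s)=|\{X:\text{top of }X=t\}|$ for consecutive $s\pl t$ yields $\lambda(s_L)=\lambda(h)$ and $\lambda(s_R)=\lambda(h)-(\ell_\alpha+1-\beta)=v_{\alpha\beta}$, so $\lambda(X_{n+1})=\lambda(s_R)=e_{n+1}$. Preservation of $\lambda(X_i)$ for $i\le n$ then follows because each pre-existing bottom edge, after possible renaming or truncation, sits at the same $\pl$-position as in $\R$ and counts the same number of rectangles below.

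The principal obstacle is proving that the $\pl$-structure of $\R'$ below $s_L$ faithfully mirrors that of $\R$ below~$h$, so that the predecessor of $s_R$ in $\R'$ coincides with the predecessor of $h$ in $\R$ and the incremental computation is legitimate. This rests on two observations: first, the defining clause of the $(**)$ rule guarantees that every horizontal segment lower than the right part of $h$ in $\R$ remains lower than $s_R$ in $\R'$, so no previously low segment migrates above $s_R$ in $\pl$; second, the horizontal segments above $h$ that get truncated by the extension of $v$ still witness the same $\ph$-relations with segments at or below $h$'s level, since truncation removes only their portions right of $v$ and preserves every interior point that lay above $h$. Careful bookkeeping of these effects, including tracking the rectangles split by the extended $v$ and confirming via $\lambda(s_L)=\lambda(h)$ that their bottom edges retain their $\lambda$-values, completes the induction.
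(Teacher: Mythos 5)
Your proposal is correct and follows essentially the same route as the paper's proof: induction over the three succession rules of T2, verifying in each case that $\lambda(X_{n+1})$ equals the appended entry (in particular the key computation $\lambda(X_{n+1})=e_{b_\alpha}-(\ell_\alpha+1-\beta)=v_{\alpha\beta}$ for rule $(**)$, using Propositions~\ref{thm:admissible2} and~\ref{thm:E}). You supply somewhat more detail than the paper on why the $\lambda$-labels of previously inserted rectangles are preserved and on the behaviour of $\ph$ under the $(**)$ modification, but the argument is the same.
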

\begin{proof}
We proceed by induction.
Assume that $\R \in R^s_{n}(\tu)$ and $e = \sigma(\R) \in I_{n}(011, 201)$, 
$\R' \in R^s_{n+1}(\tu) $ is obtained from $\R$ by one of the succession rules of T2,
and $e' \in I_{n+1}(011, 201)$ is obtained by the same succession rule from~$e$.
We assume that both $\R$ and $\R'$ are given as canonical drawings.

Suppose that $\R'$ is obtained from $\R$ and $e'$ from $e$ by $(*)$,
specifically by $(k,\ell) \to (k-j+1, \ell+j-1)$.
Then we have $\lambda(X_{n+1}) = (n+1) - (k-j)$ because the $k-j$ left-most $\mathrm{N}$-rectangles are 
the only rectangles of $\R'$ which are not lower than $X_n$.
On the other hand, the final value of $e$ is in this case $e_{n+1} = M+j = n+1-k+j$.
Therefore, the $\lambda$-label of the last inserted rectangle is equal to $e_{n+1}$,
and hence, we have $\sigma(\R')=e'$.

Suppose that $\R'$ is obtained from $\R$, and $e'$ from $e$ by $(**)$,
specifically $\R'$ by modifying the $\td$ joint $\pi_{\alpha\beta}$
and $e'$ by adding $e_{n+1}=v_{\alpha\beta} =  e_{b_\alpha} - 1 - (\ell_\alpha - \beta)$.
Then $\lambda(X_{n+1})$ is equal to $e_{n+1}$,
since in $\R$ the $\lambda$-label of the modified horizontal segment was 
$e_{b_\alpha}$ by induction, and in $\R'$ just $e_{b_\alpha} - (\ell_\alpha +1 - \beta)$ of these rectangles 
are lower than $X_{n+1}$.

Finally, if $\R'$ is obtained from $\R$ and $e'$ from $e$ by $(***)$,
then the $\lambda$-label of the right-most rectangle is $0$,
and also $e_{n+1}=0$.
\end{proof}

To summarize, given $\R \in R^s(\tu)$, we proceed as follows:
We consider its canonical drawing and use it to determine the $\lambda$-labeling:
the $\lambda$-labels of the bottom edges are $0$, and then we assign the labels going
from bottom to top, and the $\lambda$-label of every segment $s$ 
is the $\lambda$-label of the previous segment plus the number of rectangles just below $s$.
To determine the order of taking rectangles, we \textit{delete} $\mathrm{NE}$ rectangles
and then reverse the order. 

\vspace{4pt}

\nid\begin{minipage}{0.65\textwidth}\setlength{\parindent}{1.5em}
There is an even more explicit description of $\sigma$.
Consider the directed graph $T$ whose nodes are rectangles of $\R$, 
and there is an edge from $X$ to $Y$ if 
the \textrm{SE}-corner of $X$ lies on the left side of $Y$. 
If there is just one $\mathrm{E}$-rectangle $K$ 
(in this case we refer to $K$ as the \textit{right-most rectangle}), then 
$T$ is a (reversed) planar rooted tree with root $K$.
If there is no such $K$, we augment $\R$ by such a rectangle,
which corresponds to adding a $0$ value to $e$.
\end{minipage}
\hspace{8pt}
\begin{minipage}{0.3\textwidth}
\includegraphics[scale=0.7]{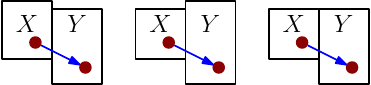}
\end{minipage}

\vspace{2pt}
It is easy to see by considering the succession rules that the last 
inserted $\mathrm{NE}$-rectangle is always the last node of $T$ 
(excluding the auxiliary rectangle $K$) in the \textit{reverse post-order} (that is, for every node, 
we recursively traverse its subtrees from right to left, 
and then the node itself).\footnote{We use the convention that
the right-to-left ordering of the subtrees of a node refers to the drawing in which 
the root is drawn at the top and all the edges are directed upwards.
For example, for the rectangle labeled $5$ in Figure~\ref{fig:strong2}(a),
the right-most subtree contains rectangles $8,6$,
the next subtree contains rectangles $10,12,11,9$,
and the left-most subtree contains rectangles $16,15$.}
Hence, we can  describe $\sigma$ as follows:
this is an inversion sequence obtained by writing the $\lambda$-labels of 
the rectangles of $\R$, when they are taken in the reverse post-order of $T$.
See Figure~\ref{fig:strong2}(a), where the tree $T$ is shown in blue.

\begin{figure}[h]
\begin{center}
\includegraphics[width=0.9\textwidth]{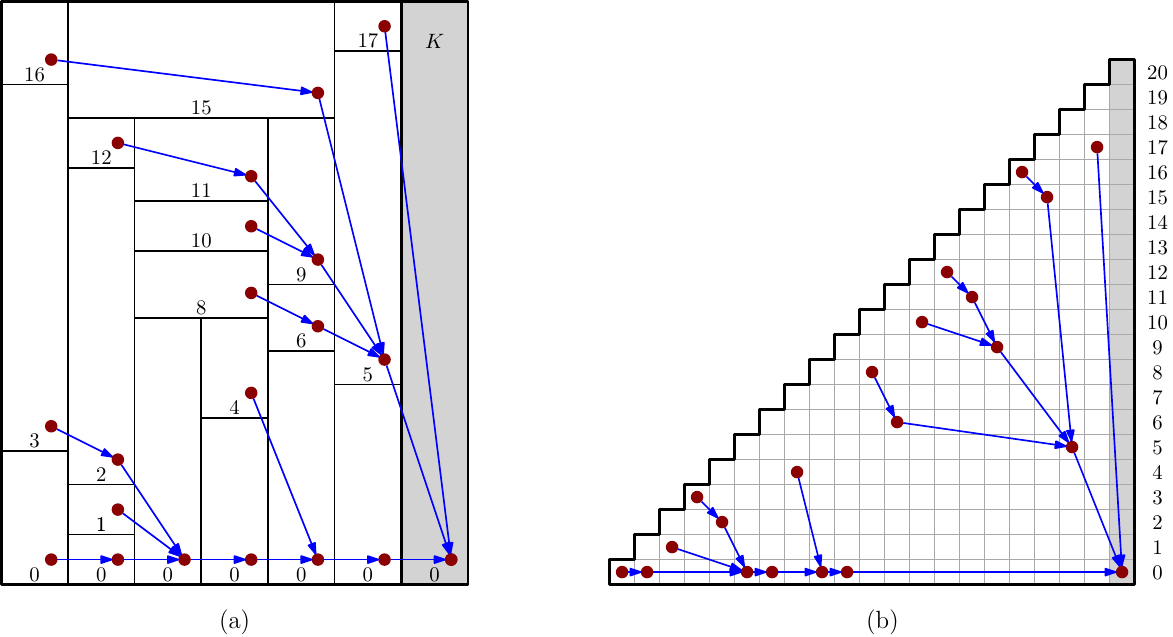} 
\end{center}
\caption{Illustration to the proof of Theorem~\ref{thm:strong2}:
(a) A canonical drawing of~$\R$ and the tree $T$ (the root $K$ is highlighted by grey).
(b) The inversion sequence $f = \sigma(\R)\in I(011, 201)$.}
\label{fig:strong2}
\end{figure}

Finally, we can demonstrate the structural identity of $R^s(\tu)$ and $I(011, 201)$
by drawing the corresponding tree over the plot of $e = \sigma(\R)$.
For two elements $e_i$ and $e_j$ of $e \in I(011, 201)$,
we say that $(e_i, e_j)$ is an \textit{inversion} if $i<j$ and $e(j) \leq e(i)$.
An inversion $(e_i, e_j)$ is \textit{minimal}
if there is no $\ell$ such that $i<\ell< j$ and $e(j)\leq e(\ell)<e(i)$.
It is easy to see that for every $e_i \neq 0$ except the last element,
(assuming that the last value of $e$ is $0$)
there is a unique $j$ such that $(e_i, e_j)$ is a minimal inversion.
Then $X_j$ is a successor of $X_i$ if and only if $(e_i, e_j)$ is an~inversion;
moreover,  $X_j$ is a successor of $X_i$ if and only if $(e_i, e_j)$ is a minimal inversion:
both statements are easily seen inductively by following the succession rules.
It~follows that if we draw the tree over the plot of $e$
where the nodes are the (dots representing the) elements of $e$
and the directed edges connect minimal inversions,
then it is isomorphic the to the embedding of $T$ in the drawing of $\R$.

\medskip

The following statistics of $I(011, 201)$ match the numbers of rectangles that touch
the sides of $R$.

\begin{prop}\label{thm:stat2}
Let $\R \in R^s(\tu)$, and let $e = \sigma (\R)$. Then: 
\vspace{-5pt}
\begin{itemize}
\item[(a)] The number of rectangles of $\R$ that touch $\mathrm{N}$ is the bounce of $e$.
\item[(b)] The number of rectangles of $\R$ that touch $\mathrm{E}$ is the number of right-to-left minima of $e$.
\item[(c)] The number of rectangles of $\R$ that touch $\mathrm{S}$ is the number of $0$ elements in $e$.
\item[(d)] The number of rectangles of $\R$ that touch $\mathrm{W}$ is the number of high elements in $e$.
\end{itemize}
\end{prop}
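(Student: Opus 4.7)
The plan is to handle parts (a)--(c) as essentially immediate consequences of results already established in this section, and to prove part (d) by induction along the succession rules of T2.

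For (a): Theorem~\ref{thm:strong2} shows that T2 is a generating tree for both $R^s(\tu)$, where $k$ is defined as the number of $\mathrm{N}$-rectangles, and $I(011, 201)$, where $k = n - M$ is the bounce. Since $\sigma$ is defined to be the bijection respecting the succession rules of T2, the two parameters agree. For (b), this is precisely the first assertion of Proposition~\ref{thm:E}. For (c), Theorem~\ref{thm:sigma_explicit} gives $e_j = \lambda(X_j)$; by definition of $\lambda$, a rectangle's lower edge is $\lambda$-labeled $0$ exactly when it lies on $\mathrm{S}$ (any interior horizontal segment has at least one rectangle below it), so $e_j = 0$ iff $X_j$ touches $\mathrm{S}$.

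For (d), I would prove by induction on $n$ the pointwise statement that $X_j$ touches $\mathrm{W}$ if and only if $e_j$ is high, that is, $e_j = j - 1$. The base $n=1$ is trivial. For the inductive step, suppose the statement holds for $\R \in R^s_n(\tu)$ with $e = \sigma(\R)$, and let $\R'$ arise from $\R$ by one of the three succession rules of T2, so that $e' = \sigma(\R')$. Since $e'_i = e_i$ for $i \leq n$, the high/non-high status of $e'_1, \ldots, e'_n$ is inherited from $e$; in parallel, one needs that the $\mathrm{W}$-status of $X_1, \ldots, X_n$ is preserved by the rule. It then remains to check that $X_{n+1}$ touches $\mathrm{W}$ iff $e'_{n+1}$ is high. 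The formulas in T2 and the geometry of the insertion rules give: in $(*)$ with parameter $j$, $e'_{n+1} = M + j$, which equals $n = (n+1) - 1$ iff $j = k$; geometrically, $X_{n+1}$ touches $\mathrm{W}$ iff all $k$ $\mathrm{N}$-rectangles are pushed down, i.e., iff $j = k$. In $(**)$, $e'_{n+1} \leq M - 1 \leq n - 2$, hence not high; and $X_{n+1}$ has its left side on the extended interior segment $v$, so it does not touch $\mathrm{W}$. In $(***)$, $e'_{n+1} = 0 \neq n$ (since $n \geq 1$), and $X_{n+1}$ is placed to the right of $\R$, hence does not touch $\mathrm{W}$.

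The main obstacle is the verification that the $\mathrm{W}$-status of each previously inserted $X_i$ is left unchanged by every rule. For $(*)$ this is essentially transparent: pushing the $j$ rightmost $\mathrm{N}$-rectangles downward alters only their vertical extents, not their horizontal positions. For $(***)$ the modification is strictly to the right of the existing rectangulation. The substantive case is $(**)$: the downward push of $r$, the shortening of vertical segments whose upper endpoints lie on $r$, the truncation from the right of horizontal segments met by the extension of $v$, and the insertion of $X_{n+1}$ all take place inside the sub-region bounded by $\mathrm{N}$, $\mathrm{E}$, the (shifted) segment $r$, and the (extended) segment $v$. No $\mathrm{W}$-rectangle lies in that sub-region, and rectangles outside of it have their left boundaries unchanged. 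A careful geometric check (most naturally carried out in the canonical drawing from Theorem~\ref{thm:sigma_explicit}) secures this, after which the increments in the $\mathrm{W}$-count and in the high-element count agree rule by rule, and the induction closes.
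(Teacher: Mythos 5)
Your proof is correct and follows essentially the same route as the paper's: part (b) is quoted from Proposition~\ref{thm:E}, and the remaining parts are read off from the succession rules of T2 together with the $\lambda$-labeling of Theorem~\ref{thm:sigma_explicit}. The only differences are cosmetic --- the paper derives (a) from the $\lambda$-label of the highest segment and (c) from rule $(*{*}*)$, whereas you swap these two tools --- and your rule-by-rule check in (d) that the $\mathrm{W}$-status of previously inserted rectangles is preserved makes explicit a point the paper leaves implicit.
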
 
\begin{proof}
(a) The number of $\mathrm{N}$-rectangles is $n$ minus the $\lambda$-label of the highest segment,
hence $n-M$.
(b)~This~follows from Proposition~\ref{thm:E}.
(c) A new rectangle is an $\mathrm{S}$-rectangle if and only if the rule $(***)$ is applied. The same rule for $e$ adds $0$.
(d) A new rectangle is a $\mathrm{W}$-rectangle if and only if the rule $(*)$ with $j=k$ is applied. The same rule adds a high element to $e$.
\end{proof}

\subsubsection{Summary concerning $I(010,101,120,201)$ and $I(011,201)$}
\label{sec:conj}
Our bijections $\tau^{(7)}$ and $\sigma$ together provide the proof of the conjecture by Yan and Lin.
Moreover, their explicit forms lead to several matching statistics in these classes.

\begin{theorem}\label{thm:conj}
For every $n \geq 1$:
\begin{itemize}
\item[(a)] We have $|I_n(010,101,120,201)| = |I_n(011,201)|$.
\item[(b)] The quadruple of statistics $(a,b,c,d)$ 
for the classes $I_n(010,101,120,201)$, $I_n(010,110,120,210)$, and $I_n(010,100,120,210)$,  where
\vspace{-3.7pt}
\begin{itemize}
\item[] $a$ is the number of $0$ elements,
\item[] $b$ is the number of left-to-right-maxima,
\item[] $c$ is the bounce,
\item[] $d$ is the number of high elements.
\end{itemize}
\vspace{-3.7pt}
matches the quadruple of statistics $(x,y,z,t)$ for the class $I_n(011,201)$, where
\vspace{-3.7pt}
\begin{itemize}
\item[] $x$ is the number of high elements, 
\item[] $y$ is the number of $0$ elements,
\item[] $z$ is the number of right-left-minima,
\item[] $t$ is the bounce. 
\end{itemize}
\vspace{-3.7pt}
Moreover, $a$ can be switched with $c$, and $x$ with $z$.
\end{itemize}
\end{theorem}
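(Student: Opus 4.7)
The plan is to derive Theorem~\ref{thm:conj} by composing the bijections $\tau^{(7)}$ (from Theorem~\ref{thm:strong1}) and $\sigma$ (from Theorem~\ref{thm:strong2}) with an elementary geometric symmetry of rectangulations. Let $\phi$ denote the reflection of a rectangulation across a horizontal axis. Since this operation turns every joint of shape $\td$ into a joint of shape $\tu$ and vice versa, it restricts to a size-preserving bijection $\phi\colon R^s_n(\td) \to R^s_n(\tu)$. Under $\phi$ the roles of $\mathrm{N}$ and $\mathrm{S}$ are exchanged, while $\mathrm{W}$- and $\mathrm{E}$-rectangles keep their roles. Hence the composition $\sigma \circ \phi \circ (\tau^{(7)})^{-1}$ is a size-preserving bijection $I_n(010,101,120,201) \to I_n(011,201)$, proving part~(a).

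For part~(b) I would track the four boundary-rectangle counts of $\R$ through $\phi$ using Propositions~\ref{thm:stat1} and~\ref{thm:stat2}. Given $e \in I_n(010,101,120,201)$, set $\R = (\tau^{(7)})^{-1}(e)$ and $f = \sigma(\phi(\R))$. By Proposition~\ref{thm:stat1}, the statistics $(a,b,c,d)$ of $e$ equal respectively the $\mathrm{W}$-, $\mathrm{N}$-, $\mathrm{E}$- and $\mathrm{S}$-counts of $\R$; applying $\phi$ converts these into the $\mathrm{W}$-, $\mathrm{S}$-, $\mathrm{E}$- and $\mathrm{N}$-counts of $\phi(\R)$; and by Proposition~\ref{thm:stat2} these are exactly the number of high elements, the number of $0$ elements, the number of right-to-left minima, and the bounce of $f$. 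That is the claimed matching $a=x$, $b=y$, $c=z$, $d=t$. Since Proposition~\ref{thm:stat1} attaches the same boundary statistics to all three of $\tau^{(6)}, \tau^{(7)}, \tau^{(8)}$, the conclusion transfers verbatim to $I^{(6)}$ and $I^{(8)}$.

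For the \emph{Moreover} assertion I would introduce a second symmetry $\psi$, the reflection across a vertical axis. Both $\td$ and $\tu$ are invariant under this reflection, so $\psi$ restricts to self-bijections of $R^s_n(\td)$ and of $R^s_n(\tu)$, each exchanging $\mathrm{E}$- and $\mathrm{W}$-rectangles while preserving the $\mathrm{N}$- and $\mathrm{S}$-counts. Conjugating by $\tau^{(7)}$ (or by $\tau^{(6)},\tau^{(8)}$) yields a self-bijection of each class that swaps the number of $0$ elements with the bounce, i.e.\ swaps $a$ and $c$; conjugating by $\sigma$ yields a self-bijection of $I_n(011,201)$ that swaps the number of high elements with the number of right-to-left minima, i.e.\ swaps $x$ and~$z$.

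There is no serious obstacle: once the bijections and statistics of Sections~\ref{sec:td_strong1}--\ref{sec:td_strong2} are in hand, the argument is pure bookkeeping. The only point requiring genuine care is the choice of reflection: it must be the reflection across a \emph{horizontal} axis that sends $\td$ to $\tu$ (and not, say, a $180^\circ$ rotation) so that the $\mathrm{E}/\mathrm{W}$ roles are preserved and the specific matching $(a,b,c,d)\leftrightarrow(x,y,z,t)$ stated in the theorem --- rather than a permuted version --- is produced.
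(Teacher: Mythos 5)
Your proposal is correct and follows essentially the same route as the paper: part (a) by composing $\tau^{(7)}$ and $\sigma$ through the rectangulation classes, part (b) by chaining Propositions~\ref{thm:stat1} and~\ref{thm:stat2}, and the \emph{Moreover} clause via the left--right reflection symmetry. The only difference is presentational --- you make explicit the up--down reflection $\phi\colon R^s_n(\td)\to R^s_n(\tu)$ and verify the resulting permutation of the boundary counts, which the paper leaves implicit in its remark that $\tau^{(7)}$ uses $\td$-avoiding and $\sigma$ uses $\tu$-avoiding rectangulations; your explicit bookkeeping (and the caveat about not using a $180^\circ$ rotation) is a sound and slightly more careful write-up of the same argument.
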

\begin{proof}
\begin{itemize}
\item[(a)] This follows from Theorems~\ref{thm:strong1} and~\ref{thm:strong2}
where we proved, by means of bijections $\tau^{(7)}$ and $\sigma$,
that $\td$-avoiding strong rectangulations are equinumerous
to both of these classes of inversion sequences.
\item[(b)] 
This follows from Propositions~\ref{thm:stat1} and~\ref{thm:stat2}
(recall that we used $\td$-avoiding rectangulations for $\tau^{(7)}$, 
and $\tu$-avoiding rectangulations for $\sigma$).
Moreover, due to the horizontal symmetry of these classes
(that is, $\R$ belongs to $R^s(\td)$
if and only if 
the horizontal reflection of $\R$ belongs to $R^s(\td)$)
$a$ can be switched with~$c$,
and, similarly, $x$ with $z$. \hfill \qedsymbol
\end{itemize}
\renewcommand{\qed}{} 
\end{proof}

\subsection{Enumeration and bijections for $R(\td,\tu)$}
\label{sec:td_tu}

\noindent
\begin{minipage}{0.76\textwidth}
\setlength{\parindent}{1.5em}
In this section we consider $(\td,\tu)$-avoiding rectangulations.
These are precisely the rectangulations in which every vertical segment reaches $\mathrm{N}$ and $\mathrm{S}$.
They can be obtained in two steps:
The empty rectangle $R$ is first partitioned by vertical cuts into $k \geq 1$ ``primary'' rectangles  
$A_1, A_2, \ldots, A_{k}$,
and then every rectangle $A_i$ is further partitioned by horizontal segments into $\ell_i \geq 1$ ``secondary'' rectangles. 
We will show that enumeration of $(\td,\tu)$-avoiding weak rectangulations is elementary, 
but $(\td,\tu)$-avoiding strong rectangulations are bijective to an interesting family of 
Dyck paths --- the so called \textit{rushed Dyck paths}.
\end{minipage}
\hspace{15pt}
\begin{minipage}{0.2\textwidth}
\includegraphics[scale=0.8]{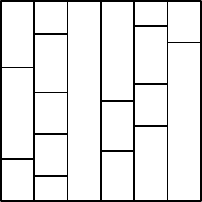} 
\end{minipage}

\vspace{15pt}

\subsubsection{$(\td,\tu)$-avoiding weak rectangulations: bijection with compositions}\label{sec:vdash_dashv_weak}
\begin{prop}\label{prop:vdash_dashv_weak}
For every $n \geq 1$ we have $|R^w_n(\td,\tu)| = 2^{n-1}$.
\end{prop}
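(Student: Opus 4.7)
My plan is to exploit the fact that avoiding both $\td$ and $\tu$ forces every vertical segment of $\R$ to reach both $\mathrm{N}$ and $\mathrm{S}$, i.e., to be a \emph{vertical cut}. First I would argue, using the preceding paragraph's observation, that $R$ is split by $k-1 \geq 0$ vertical cuts into primary rectangles $A_1, \ldots, A_k$ (left to right), and that there are no other vertical segments in $\R$ at all. Once this is established, I would look at the horizontal segments inside a fixed $A_i$: each such segment must have both endpoints on vertical segments of $\R$, and the only vertical segments available are the left and right sides of $A_i$. Therefore every horizontal segment in $A_i$ is itself a horizontal cut of $A_i$, and $A_i$ is a vertical stack of $\ell_i \geq 1$ secondary rectangles, with $\ell_1 + \cdots + \ell_k = n$.

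Next, I would show that for weak equivalence the only data retained is the composition $(\ell_1, \ldots, \ell_k)$ of $n$. In one direction, any two rectangulations sharing the same composition admit the obvious rectangle-to-rectangle bijection that preserves the left--right relation (rectangles in $A_i$ are left of rectangles in $A_j$ iff $i < j$) and the above--below relation (within a fixed $A_i$ the stacking is totally ordered, and rectangles from distinct $A_i$'s are neither above nor below each other), so they are weakly equivalent. Conversely, any weak equivalence must preserve the partition into the $A_i$'s and their cardinalities, since the primary blocks can be detected from the neighborhood relations alone (the left--right linear structure of blocks, and the stacking inside each block).

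Finally, the count of compositions of $n$ into positive parts is the well-known $2^{n-1}$ (one bit per gap between consecutive units), giving $|R^w_n(\td,\tu)| = 2^{n-1}$.

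The only step that requires genuine care is the weak-equivalence analysis: I must confirm that the composition is a complete weak invariant and that no two different compositions can yield weakly equivalent rectangulations. Everything else is a direct consequence of the avoidance of $\td$ and $\tu$ and the standard formula for compositions.
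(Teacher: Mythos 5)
Your proposal is correct and follows the same route as the paper: the paper's proof is exactly the bijection $\R \mapsto (\ell_1,\ldots,\ell_k)$ with compositions of $n$, relying on the preceding structural description of $(\td,\tu)$-avoiding rectangulations. You simply spell out the details (every vertical segment is a cut, every horizontal segment spans its primary block, and the composition is a complete weak invariant) that the paper leaves implicit.
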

\begin{proof} For every $n$ there is a bijection between 
$(\td,\tu)$-avoiding rectangulations of size $n$
and compositions of~$n$ which maps $\R$ to the composition $n=\ell_1+\ell_2 + \ldots + \ell_k$,
where $\ell_1, \ell_2,  \ldots, \ell_k$ are as above.
For example, the~rectangulation in
the drawing corresponds to the composition $18=3+5+1+3+4+2$. 
\end{proof}

\subsubsection{$(\td,\tu)$-avoiding strong rectangulations: bijection with rushed Dyck paths}
\label{sec:td_tu_strong}
\label{RUSH}

We use the equivalent set of patterns $\{\tr,\tl\}$.
We show that $(\tr,\tl)$-avoiding strong rectangulations
are equinumerous with two families of Dyck paths: 
\textit{rushed Dyck paths} and \textit{progressive Dyck paths}.
A Dyck path is \textit{rushed} if it starts with $h \geq 1$ up-steps 
and then never visits the altitude $h$ again.
A Dyck path is \textit{progressive} if every peak at altitude $h > 1$ 
is preceded by at least one peak at the altitude $h-1$.

Both rushed Dyck paths and progressive Dyck paths are enumerated by~\href{https://oeis.org/A287709}{A287709}. 
In 2018, the first author proved, via telescoping in generating functions, that these families are equinumerous, 
and Jel\'inek~\cite{Jelinek} gave a~``semi-bijective'' proof based on the involution principle.
A direct bijective proof, along with an enumerative formula as well as asymptotic approximations, 
was recently given by Bacher~\cite{Bacher2024},
who also coined the notions of \textit{rushed} and \textit{progressive} Dyck paths. 
In the following theorem we prove that these kinds of Dyck paths 
are equinumerious with $(\tr,\tl)$-avoiding strong rectangulations.

\begin{theorem}\label{thm:rush}
For every $n\geq 1$ and $k \geq 1$,
the number of rushed Dyck paths of semilength $n+1$
and height $k+1$
is equal 
to the number of $(\tr,\tl)$-avoiding strong rectangulations of size $n$
with $k-1$ horizontal segments.
Therefore, $(\tr,\tl)$-avoiding strong rectangulations are enumerated by~\href{https://oeis.org/A287709}{A287709}. 
\end{theorem}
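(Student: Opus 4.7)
The plan is to construct an explicit bijection $\phi$ that, for every $n \geq 1$ and $k \geq 1$, sends $(\tr,\tl)$-avoiding strong rectangulations of size $n$ with $k-1$ horizontal segments to rushed Dyck paths of semilength $n+1$ and height $k+1$. My approach parallels the generating-tree methodology developed for Theorems~\ref{thm:strong1} and~\ref{thm:strong2}.

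I first observe that a rectangulation $\R \in R^s_n(\tr,\tl)$ with $k-1$ horizontal segments has a very transparent structure: it decomposes into $k$ horizontal strips $A_1, \ldots, A_k$ (top to bottom), where strip $A_i$ is partitioned by $\ell_i - 1$ internal vertical segments into $\ell_i$ rectangles with $\ell_1 + \cdots + \ell_k = n$, and for each internal horizontal cut $s_i$ (for $i = 1, \ldots, k-1$) the strong equivalence is recorded by a shuffling word $w_i$ of length $\ell_i + \ell_{i+1} - 2$, with $\ell_i - 1$ U-letters (bottom endpoints of $A_i$-segments) and $\ell_{i+1} - 1$ D-letters (top endpoints of $A_{i+1}$-segments), interleaved left to right along $s_i$. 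Consequently the count of such rectangulations equals $\sum_{\ell_1 + \cdots + \ell_k = n} \prod_{i=1}^{k-1} \binom{\ell_i + \ell_{i+1} - 2}{\ell_i - 1}$, which becomes the target to be matched by the enumerator of rushed Dyck paths.

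I would then set up a generating tree $T_3$ on $R^s(\tr,\tl)$ via an $\mathrm{NE}$-corner insertion that either appends a rightmost rectangle to the current top strip (with a shuffling choice along $s_1$) or creates a new one-rectangle top strip above the current one (introducing a fresh horizontal cut $s_0$). A refined state $(k, t)$ tracking the number of strips $k$ together with a statistic of $w_1$ (for instance, the number of D-letters occurring after the last U-letter of $w_1$) captures enough information to express the succession rules with well-defined multiplicities. In parallel, I would construct a generating tree on rushed Dyck paths with a matching state $(h, t')$---where $h$ is the height and $t'$ is a suitable profile statistic of the sub-path just below the flag pole---whose insertions correspond to inserting new peaks below the flag pole or raising the flag pole itself. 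The theorem then follows by verifying that the two generating trees are isomorphic.

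The main obstacle will be pinpointing the correct refinement statistic so that the shuffling multiplicities $\binom{\ell_i + \ell_{i+1} - 2}{\ell_i - 1}$ on the rectangulation side align precisely with the peak-insertion multiplicities on the Dyck-path side; the intertwining of successive shuffling words across cuts makes a naive one-parameter refinement insufficient. If the direct generating-tree match proves delicate, I would instead construct a bijection from $R^s(\tr,\tl)$ to \emph{progressive} Dyck paths (also enumerated by~\href{https://oeis.org/A287709}{A287709}), whose recursive peak-by-altitude structure aligns naturally with building the rectangulation strip by strip, and then compose with Bacher's~\cite{Bacher2024} bijection between progressive and rushed Dyck paths. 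Small-case verification for $n \leq 4$ would provide a sanity check and help guide the inductive step.
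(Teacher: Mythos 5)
Your structural analysis of $R^s(\tr,\tl)$ is correct: the decomposition into $k$ strips, the identification of the strong-equivalence data with the interleaving words $w_i$ along the horizontal cuts, and the resulting count $\sum_{\ell_1+\cdots+\ell_k=n}\prod_{i=1}^{k-1}\binom{\ell_i+\ell_{i+1}-2}{\ell_i-1}$ are all right (and the independence of the shufflings across consecutive cuts, which you use implicitly, does hold). But the argument stops exactly where the content of the theorem begins: nothing in the proposal actually connects this count to rushed Dyck paths. The generating tree $T_3$ is never constructed, its succession rules are never written down, and the matching tree on the Dyck-path side is only gestured at. Moreover, the obstacle you flag is real and is fatal for the state you propose: with the label $(k,t)$, the rule that creates a new one-rectangle top strip must produce a child whose $t$-value equals the number of vertical segments of the \emph{old} top strip (all of their top endpoints become the D-letters of the fresh $w_1$), and that quantity is not a function of $(k,t)$ --- for instance, the states with $w_1=UDU$ and $w_1=U$ both have $t=0$ but yield different children under this rule. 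So a two-parameter tree cannot close, and the larger state space it would require is left entirely unbuilt; the fallback through progressive Dyck paths and Bacher's bijection is likewise only named, not executed. As it stands this is a correct reformulation of one side of the identity, not a proof.

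For comparison, the paper's proof avoids all of this bookkeeping with one direct bijection adapted from the classical correspondence between Dyck paths and heaps of dimers: in a rushed path of height $h=k+1$, each of the $n+1-h$ non-initial up-steps becomes a unit-height vertical segment placed at its midpoint inside a rectangle pre-sliced into $h-1$ horizontal strips; your interleaving words $w_i$ are exactly the heap-of-pieces order of these dimers, the inverse map reads off the left-most linear extension of that order, and the rushed condition is precisely what confines every non-initial up-step to the strip $[0,h-1]$. If you want to salvage your route, the most economical fix is to recognize your product formula as the count of heaps of dimers on $h-1$ rows and then prove (or cite) the strip-Dyck-path/heap correspondence --- which is essentially the paper's argument.
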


\begin{proof} Consider the following bijection $\varphi$ from rushed Dyck paths of semilength $\geq 2$ 
to $R^s(\tr,\tl)$, which is adapted from the canonical mapping between Dyck paths
and heaps of dimers (see for example~\cite[Lemma~2]{CiglerKrattenthaler2024} or~\cite{Viennot2021}).

Let $P$ be a rushed Dyck path of semilength $n+1$
that starts with precisely $h=k+1$ up-steps.
Consider the usual drawing of~$P$ as a lattice path from $(0,0)$ to $(2(n+1),0)$. 
Next, consider the grid rectangle $R=[h+1, \, 2(n+1)]\times[0,h-1]$, 
and draw horizontal segments along the grid lines to partition $R$ into $h-1$ rectangles
on top of each other.
For every up-step~$\mathsf{U}$ of~$P$ within $R$ 
(that is, every up-step of~$P$ which is not one of $h$ initial up-steps),
draw a vertical segment of height~$1$
whose midpoint coincides with the midpoint of $\mathsf{U}$.
Then~$\R := \varphi(P)$ is the rectangle $R$ with horizontal and vertical segments added as described.
It has $h-2 = k-1$ horizontal segments and $n+1-h$ vertical segments and hence, it is of size~$n$;
and it is clearly $(\tr,\tl)$-avoiding. 
See Figure~\ref{fig:rush} for an example. 

\begin{figure}[h]
\begin{center}
\includegraphics[width=\textwidth]{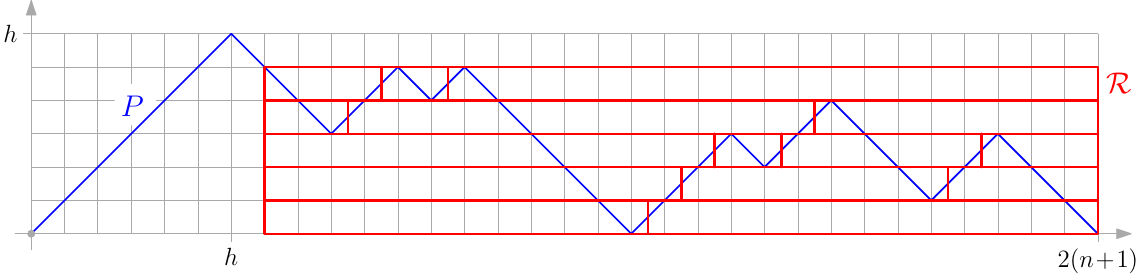} 
\end{center}
\caption{Illustration for the mapping $\varphi$ in the proof of Theorem~\ref{thm:rush}:
a rushed Dyck path $P$ and the \\ $(\tr,\tl)$-avoiding strong rectangulation $\R=\varphi(P)$.}
\label{fig:rush}
\end{figure}

The mapping $\varphi$ is a bijection, and its inverse $\varphi^{-1}$ can be described as follows:
Given a $(\tr,\tl)$-avoiding strong rectangulation $\mathcal{R}$, 
we first consider the (left-to-right) heap of pieces partial order on the vertical segments of $\R$
(see Figure~\ref{fig:rush_inv}(a)),
and then its left-most linear extension 
(see Figure~\ref{fig:rush_inv}(b)).
We draw $\R$ so that its vertical segments occur
according to this linear ordering (see Figure~\ref{fig:rush_inv}(b)).
Finally, we draw the (unique) Dyck path~$P$
such that its baseline contains the bottom side of $\mathcal{R}$,
it starts with $k+1$ up-steps, where $k$ is the number of primary rectangles in $\mathcal{R}$,
and its non-initial up-steps intersect the vertical segments according to the introduced order.
See Figure~\ref{fig:rush_inv}(c) for an illustration.

The path $P$ is clearly a rushed Dyck path. 
It satisfies $\varphi(P)=\R$ since the order of up-steps in a Dyck path
is always the left-most linear extension of their heap of pieces order.
It is unique since a Dyck path is uniquely defined by 
the sequence of the levels of its up-steps.\end{proof}

\begin{figure}[h]
\begin{center}
\includegraphics[width=\textwidth]{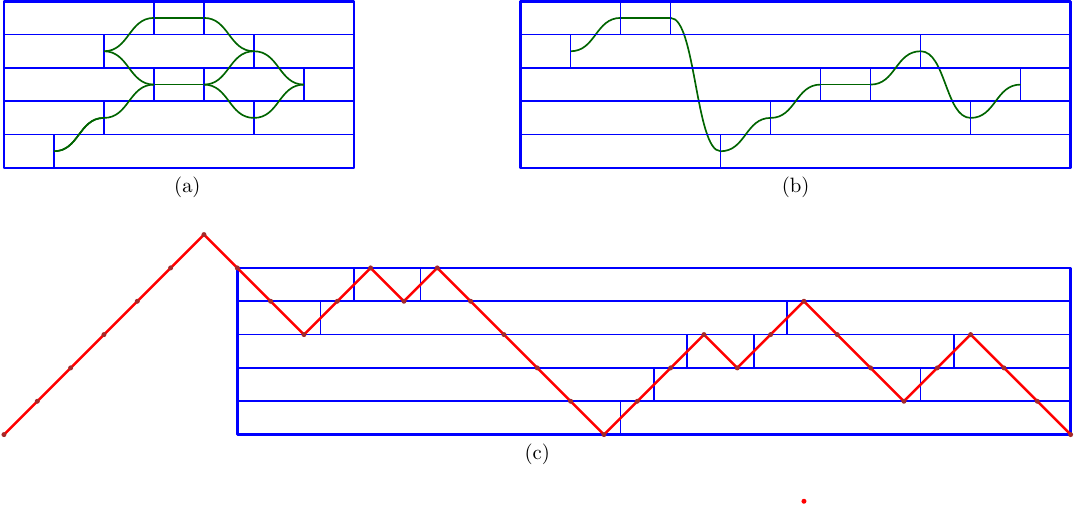} 
\end{center}
\caption{Illustration for the mapping $\varphi^{-1}$ in the proof of Theorem~\ref{thm:rush}:
(a) A $(\tr,\tl)$-avoiding strong rectangulation $\R$
and the heap of pieces poset of its vertical segments;
(b) The left-most linear extension of the poset;
(c) The rushed Dyck path $P = \varphi^{-1}(\R)$.}
\label{fig:rush_inv}
\end{figure}

The initial up-steps and the first down-step are not essential in a rushed Dyck path.
If we delete these steps and flip the path horizontally, we obtain 
an equivalent family of lattice paths with steps $(1,1)$ and $(1,-1)$
that go from $(0,0)$ to $(n,k)$
and stay (weakly) between the lines $y=0$ and $y=k$. 
This is a special case of ``Dyck paths in strips''. 
By~\cite[Example~10.11.2]{Krattenthaler2015}, the generating function for such paths for fixed~$k$ is 
$\displaystyle{\frac{1}{x \, U_{k+1}\left(\frac{1}{2x}\right)}}$,
where $U_{k+1}$ is the $(k+1)$-th Chebyshev polynomial of the second kind.
Upon the suitable change of variables, we obtain the generating function for 
$(\tr,\tl)$-avoiding strong rectangulations with $k-1$ horizontal segments:
\[g_k(x) = \displaystyle{\frac{x^\frac{k-1}{2}}{U_{k+1}\left(\frac{1}{2\sqrt{x}}\right)}}.\]
For small values of $k$, these generating functions are
$g_1(x)=\frac{x}{1-x}$, 
$g_2(x)=\frac{x^2}{1-2x}$, 
$g_3(x)=\frac{x^3}{1 - 3x + x^2}$ (\href{https://oeis.org/A001906}{A001906}, a bisection of the Fibonacci sequence),
$g_4(x)=\frac{x^4}{(1-x)(1-3x)}$ (\href{https://oeis.org/A003462}{A003462}),
$g_5(x) = \frac{x^5}{1-5x+6x^2-x^3}$ (\href{https://oeis.org/A005021}{A005021}),
$g_6(x) = \frac{x^6}{(1-2x)(1-4x+2x^2)}$ (\href{https://oeis.org/A094811}{A094811}), etc.
The asymptotic growth rate of the $k$-th sequence is $4 \cos^2 (\frac{\pi}{k+2})$.

In the work of Bacher~\cite[Theorems 4 and 5]{Bacher2024}, similar results are given
in terms of Fibonacci polynomials, and also the asymptotics 
for the number of rushed Dyck path
is derived: $\lambda \, 4^n  \, e^{-\nu n^{1/3}}  \, n^{-5/6} \, \big(1+O(n^{-1/3})\big)$,
where $\lambda \approx 4.21$ and $\nu\approx 3.18$.
Due to our result, this is also the asymptotics for $|R_n^s(\td,\tu)|$.

\subsubsection{Restricted classes of permutations and inversion sequences}

Here we provide enumerative results for classes of inversion sequences
obtained by restricting the bijections from Section~\ref{sec:td} to $\{\td, \tu\}$-avoiding rectangulations.

$\{\td, \tu\}$-avoiding rectangulations can be characterized as those 
$\td$-avoiding rectangulations in which every vertical segment is a cut. 
Accordingly, under $\tau$, $\{\td, \tu\}$-avoiding weak rectangulations 
correspond to non-decreasing inversion sequences in which all left-to-right maxima are high, 
or, equivalently, to Dyck paths in which all the valleys are at the $x$-axis.
Hence, restricting the bijection $\tau$ from Theorem~\ref{thm:td_weak}
to $\{\td, \tu\}$-avoiding rectangulations implies 
that there are $2^{n-1}$ such Dyck paths of semilength $n$ ---
however, this result is, regardless, elementary.

For strong rectangulations, restricting $\tau^{(7)}$ to $R^s(\td, \tu)$ yields the following result:

\begin{prop}\label{thm:tau_tu_td}
$(010, 101, 120, 201)$-avoiding inversion sequences in which 
all left-to-right maxima are high, 
are enumerated by \href{https://oeis.org/A287709}{A287709}.
\end{prop}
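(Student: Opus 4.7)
The plan is to show that the bijection $\tau^{(7)} \colon R^s_n(\td) \to I_n(010,101,120,201)$ of Theorem~\ref{thm:strong1} restricts to a bijection between $R^s_n(\td,\tu)$ and the subclass described in the statement. Once this is established, the enumeration follows directly from Theorem~\ref{thm:rush}, since $R^s_n(\td,\tu)$ is in bijection with $R^s_n(\tr,\tl)$ via a $90^\circ$ rotation of the rectangulation.

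First I would record a purely combinatorial fact about inversion sequences: every high element is automatically a left-to-right maximum. Indeed, if $e_j = j-1$, then for every $i < j$ we have $e_i \le i-1 < j-1 = e_j$, so $e_j$ strictly exceeds all preceding entries. Hence the set of high elements is always contained in the set of left-to-right maxima, and the condition \emph{``every left-to-right maximum is high''} is equivalent to the numerical equality $\#\{\text{high elements}\} = \#\{\text{LR maxima}\}$.

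Next I would translate this back via Proposition~\ref{thm:stat1}: for $\R \in R^s(\td)$ and $e = \tau^{(7)}(\R)$, the number of LR maxima of $e$ equals the number of $\mathrm{N}$-rectangles of $\R$, and the number of high elements of $e$ equals the number of $\mathrm{S}$-rectangles of $\R$. So it suffices to prove that, within $R^s(\td)$, the equality $\#\mathrm{N}\text{-rectangles} = \#\mathrm{S}\text{-rectangles}$ characterizes $\{\td,\tu\}$-avoidance. For this I would use the key geometric observation that in a $\td$-avoiding rectangulation every vertical segment reaches $\mathrm{N}$: therefore the $v$ vertical segments of $\R$, together with $\mathrm{W}$ and $\mathrm{E}$, partition $\mathrm{N}$ into $v+1$ intervals, each being the top edge of a distinct $\mathrm{N}$-rectangle. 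Similarly, the $c$ vertical \emph{cuts} of $\R$ partition $\mathrm{S}$ into $c+1$ intervals, each being the bottom edge of a distinct $\mathrm{S}$-rectangle. Clearly $c \le v$, with equality if and only if every vertical segment also reaches $\mathrm{S}$, i.e., if and only if $\R$ has no $\tu$-joint, i.e., if and only if $\R \in R^s(\td,\tu)$.

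Chaining the equivalences, $\tau^{(7)}$ restricts to a bijection from $R^s_n(\td,\tu)$ onto $\{e \in I_n(010,101,120,201) : \text{every LR maximum of } e \text{ is high}\}$, and Theorem~\ref{thm:rush} identifies the size of the left-hand side with A287709. The only non-routine step is the short geometric count of $\mathrm{N}$- and $\mathrm{S}$-rectangles in a $\td$-avoiding rectangulation; everything else is an immediate consequence of results already proved in the paper.
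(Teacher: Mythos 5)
Your proof is correct and follows essentially the same route as the paper: restrict $\tau^{(7)}$ to $R^s(\td,\tu)$, characterize $\{\td,\tu\}$-avoidance inside $R^s(\td)$ as ``every vertical segment is a cut,'' and invoke Theorem~\ref{thm:rush} via the rotation to $R^s(\tr,\tl)$. The paper leaves the identification of the image of the restricted bijection implicit, and your way of supplying that detail --- noting that high elements are always left-to-right maxima, so the condition reduces to the cardinality equation $\#\mathrm{N}\text{-rectangles}=\#\mathrm{S}\text{-rectangles}$ via Proposition~\ref{thm:stat1}, which you then verify by counting top edges against vertical segments and bottom edges against vertical cuts --- is a clean and valid justification.
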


Via $\tau^{(6)}$ and $\tau^{(8)}$ we see that
$(010, 110, 120, 210)$-avoiding inversion sequences 
and $(010, 100, 120, 210)$-avoiding inversion sequences 
that satisfy the same condition
are enumerated by the same sequence.

In context of the bijection $\sigma$, 
$\{\td, \tu\}$-avoiding rectangulations 
are precisely those in which the number of $\mathrm{N}$-rectangles
is equal to the number of $\mathrm{S}$-rectangles.
Then Proposition~\ref{thm:stat2}(a,c) implies the following result.

\begin{prop}\label{thm:sigma_tu_td}
$(011, 201)$-avoiding inversion sequences $e$ in which the bounce is equal to the number of $0$-elements 
(or, equivalently: in which the set of values is precisely $\{0, 1, \ldots, M\}$, where $M$ is the maximum of $e$)
are enumerated by \href{https://oeis.org/A287709}{A287709}.
\end{prop}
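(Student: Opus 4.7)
The plan is to restrict the bijection $\sigma \colon R^s(\tu) \to I(011, 201)$ of Theorem~\ref{thm:strong2} to the subfamily $R^s(\td, \tu) \subseteq R^s(\tu)$, identify the image using the statistics in Proposition~\ref{thm:stat2}, and then invoke Theorem~\ref{thm:rush}. Since the pattern set $\{\td, \tu\}$ is a $90^\circ$ rotation of $\{\tr, \tl\}$, we have $|R^s_n(\td, \tu)| = |R^s_n(\tr, \tl)|$, which is the sequence \href{https://oeis.org/A287709}{A287709}; hence, once the correct subfamily of $I(011, 201)$ is identified, the enumeration is immediate.

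First, I would characterize $R^s(\td, \tu)$ inside $R^s(\tu)$ by a simple numerical balance. Any $\R \in R^s(\tu)$ has every vertical segment reaching $\mathrm{S}$, so if $v$ is the number of vertical segments, the bottom row consists of exactly $v+1$ $\mathrm{S}$-rectangles. A rectangle is an $\mathrm{N}$-rectangle if and only if both of its vertical sides reach $\mathrm{N}$, i.e., are $\mathrm{W}$, $\mathrm{E}$, or a vertical cut; writing $v'$ for the number of vertical cuts, this gives $v'+1$ $\mathrm{N}$-rectangles. Equality of the two counts is therefore equivalent to $v'=v$, i.e., every vertical segment is a cut, which is precisely the condition $\R \in R^s(\td, \tu)$.

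Next, Proposition~\ref{thm:stat2}(a,c) translates these counts: the number of $\mathrm{N}$-rectangles of $\R$ equals the bounce of $e := \sigma(\R)$, and the number of $\mathrm{S}$-rectangles equals the number of $0$-elements of $e$. Combining with the previous step, $\sigma$ restricts to a bijection between $R^s_n(\td, \tu)$ and the set of $e \in I_n(011, 201)$ whose bounce equals its number of $0$-elements, which is the first form of the stated condition.

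Finally, I would verify the equivalent reformulation in terms of the value set. In $I(011, 201)$, any value $v > 0$ can occur at most once, since a second occurrence together with the smaller value $e_1 = 0$ would form the forbidden pattern $011$. Denoting by $a_0$ the number of $0$'s, by $V$ the number of distinct nonzero values, and by $M$ the maximum of $e$, we get $n = a_0 + V$ and $V \le M$, so $\mathrm{bounce} = n - M = a_0 + (V - M)$ equals $a_0$ exactly when $V = M$, i.e., when $e$ attains every value in $\{0, 1, \ldots, M\}$. The main subtle point of the whole argument is the first step, where one must verify that equality of the two boundary-rectangle counts inside $R^s(\tu)$ really forces every vertical segment to be a cut; everything else is a direct combination of Propositions~\ref{thm:stat2}, Theorems~\ref{thm:strong2} and~\ref{thm:rush}, together with the short observation about repeated values in $I(011, 201)$.
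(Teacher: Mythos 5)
Your proposal is correct and follows essentially the same route as the paper: restrict $\sigma$ to $R^s(\td,\tu)\subseteq R^s(\tu)$, characterize this subfamily by the equality of the numbers of $\mathrm{N}$- and $\mathrm{S}$-rectangles, and combine Proposition~\ref{thm:stat2}(a,c) with Theorem~\ref{thm:rush} (the paper's own proof is exactly this, stated more tersely). One small slip: a rectangle can have both vertical sides reaching $\mathrm{N}$ without being an $\mathrm{N}$-rectangle (e.g.\ the lower cell of a two-cell vertical stack), so your stated ``if and only if'' is false as written; the count $v'+1$ is nevertheless correct, by the same endpoint-interval argument you already use for the $\mathrm{S}$-rectangles.
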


\subsection{Elementary models: $R(\td,\tr)$, $R(\td,\tu,\tr)$, and $R(\td,\tu,\tr,\tl)$}
\label{sec:elementary}

The analysis of the last three classes, $R(\td,\tr)$, $R(\td,\tu,\tr)$, and $R(\td,\tu,\tr,\tl)$
is elementary to trivial. 
We~include them for the sake of completeness, 
and also indicate the family of non-decreasing inversion sequences
that correspond to these classes under $\tau$.

\vspace{7pt}

\noindent
\begin{minipage}{0.75\textwidth}
\setlength{\parindent}{1.5em}
\begin{prop}
$|R_n(\td,\tr)|=2^{n-1}$.
\end{prop}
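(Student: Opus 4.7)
The plan is to construct a size-preserving bijection between $R_n(\td,\tr)$ and the $2^{n-1}$ binary sequences of length $n-1$ by iteratively peeling the \textrm{SE}-rectangle. As a preliminary observation, avoidance of $\td$ forces every vertical segment to have its upper endpoint on $\mathrm{N}$, and avoidance of $\tr$ forces every horizontal segment to have its left endpoint on $\mathrm{W}$; since $\{\td,\tr\}$ contains a pattern from each of $\{\td,\tu\}$ and $\{\tr,\tl\}$, all segments of such a rectangulation are one-sided and the weak/strong distinction collapses (see Section~\ref{sec:patt_avoid_rect}).

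The heart of the argument is the following lemma: for every $\R \in R_n(\td,\tr)$ with $n \geq 2$, the \textrm{SE}-rectangle spans either the full width of $R$ (its \textrm{W}-side coincides with $\mathrm{W}$) or the full height (its \textrm{N}-side coincides with $\mathrm{N}$), and exactly one of these occurs (simultaneous occurrence would force the \textrm{SE}-rectangle to equal $R$). To prove it I assume neither holds: then the \textrm{N}-side of the \textrm{SE}-rectangle lies on an interior horizontal segment $h$ (which necessarily reaches $\mathrm{W}$) and its \textrm{W}-side on an interior vertical $v$ (which necessarily reaches $\mathrm{N}$), and these meet at an interior T-junction $P$, the \textrm{NW}-corner of the \textrm{SE}-rectangle. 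Since $h$ heads left to $\mathrm{W}$ and $v$ heads up to $\mathrm{N}$ from $P$, the junction must be of type $\tu$ or $\tl$. If $P$ is a $\tu$-junction, then $v$ ends at $P$ and the \textrm{W}-side of the \textrm{SE}-rectangle belongs to a \emph{different} vertical segment $v'$ whose upper endpoint is $P$, lying in the interior of $h$ --- this is an occurrence of $\td$. Symmetrically, if $P$ is a $\tl$-junction then the \textrm{N}-side of the \textrm{SE}-rectangle belongs to a new horizontal $h'$ with left endpoint $P$ in the interior of $v$, giving $\tr$. Either case contradicts the avoidance hypothesis.

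With the lemma in hand, the bijection is immediate. In the full-width case, the \textrm{N}-side of the \textrm{SE}-rectangle is a horizontal cut from $\mathrm{W}$ to $\mathrm{E}$, and the region above it is itself an element of $R_{n-1}(\td,\tr)$. Symmetrically, in the full-height case, removing the \textrm{SE}-rectangle together with the vertical cut on its left gives an element of $R_{n-1}(\td,\tr)$. Recording a bit ($0$ or $1$) for the case that occurs and iterating yields a bijection with $\{0,1\}^{n-1}$; the inverse attaches a new bottom strip or new right strip at each step, and both operations manifestly preserve avoidance of $\td$ and $\tr$, so induction delivers the enumeration $|R_n(\td,\tr)| = 2^{n-1}$.

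The main obstacle is the lemma itself. The subtle point is that neither $h$ nor $v$ themselves produces the forbidden pattern at $P$; instead, one must notice that the adjacent side of the \textrm{SE}-rectangle is carried by a \emph{new} segment whose endpoint lies at $P$ in the interior of the segment passing through, and it is this new segment that realises the $\td$ or $\tr$. Once this observation is made, the remaining steps are routine.
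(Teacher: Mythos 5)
Your overall strategy is sound and, at its core, is the same as the paper's: the paper observes that a rectangle meeting neither $\mathrm{W}$ nor $\mathrm{N}$ would have a $\td$ or $\tr$ joint at its \textrm{NW}-corner, and encodes each of the rectangles $2,\dots,n$ (in the \textrm{NW}--\textrm{SE} labeling) by which of the two sides it meets; you specialize the same corner observation to the \textrm{SE}-rectangle and peel recursively. Both routes give the bijection with $\{0,1\}^{n-1}$, and your peeling version has the advantage that the inverse map is completely explicit.

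However, the case analysis inside your lemma is wrong as written, even though the lemma is true. At the \textrm{NW}-corner $P$ of the \textrm{SE}-rectangle the arms going \emph{right} (its top side) and \emph{down} (its left side) are forced to be present; since $P$ is an interior T-joint with exactly three arms, the joint is of type $\td$ (arms left, right, down: the vertical segment carrying the left side has its upper endpoint in the interior of a horizontal through-segment) or of type $\tr$ (arms up, down, right: the horizontal segment carrying the top side has its left endpoint in the interior of a vertical through-segment) --- not $\tu$ or $\tl$, which lack the down, respectively the right, arm. Moreover, your repair via a ``different vertical segment $v'$'' with upper endpoint $P$ while $v$ still goes up from $P$ describes two collinear segments sharing an endpoint: this contradicts the maximality in the definition of a segment (they would be a single segment) and creates a four-arm crossing, excluded by genericity. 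The root of the confusion is that you defined $v$ as the segment carrying the \textrm{W}-side \emph{and} asserted it reaches $\mathrm{N}$; if the joint at $P$ is a $\td$, the segment carrying the \textrm{W}-side simply does not reach $\mathrm{N}$, and that failure \emph{is} the forbidden occurrence. Replace the $\tu$/$\tl$ dichotomy by the $\td$/$\tr$ dichotomy above and the lemma, and hence the whole argument, goes through.
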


\noindent \textit{Proof.} A rectangulation $\R$ is $(\td,\tr)$-avoiding 
if and only if every rectangle meets $\mathrm{W}$ or $\mathrm{N}$.
Indeed, if some rectangle of $\R$ meets neither $\mathrm{W}$ or $\mathrm{N}$, 
then its \textrm{NW}-corner is a part of a $\td$ or a $\tr$ joint.

Consider the \textrm{NW}--\textrm{SE} labeling of $\mathcal{R} \in R_n(\td,\tr)$.
Then $\mathcal{R}$ is uniquely determined by the binary sequence
that indicates which side of $R$ --- $\mathrm{W}$ or $\mathrm{N}$ --- the rectangles labeled $2, 3, \ldots, n$ meet. 
(For example, the rectangulation in the drawing is determined by the sequence
$\mathrm{NWWWNNWNW}$.)
Hence, there are $2^{n-1}$ such rectangulations. \hfill $\qed$
\end{minipage}
\hspace{18pt}
\begin{minipage}{0.2\textwidth}
\includegraphics[scale = 0.65]{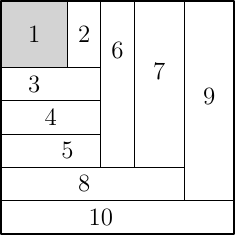}
\end{minipage}

\medskip

Under the bijection $\tau$ from Theorem~\ref{thm:td_weak}, 
$(\td,\tr)$-avoiding rectangulations correspond to non-decreasing inversion sequences in which every jump has height $1$.

\begin{obs}
$|R_n(\td,\tu,\tr)|=n.$
\end{obs}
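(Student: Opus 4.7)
The plan is to reduce the problem to a structural characterisation of the rectangulations in $R_n(\td,\tu,\tr)$ and then count directly. The key observation is that the three avoided patterns together pin down the shape of every joint along every vertical segment.

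First I would argue, as in Section~\ref{sec:td_tu}, that avoiding $\td$ and $\tu$ forces every vertical segment of $\mathcal{R}$ to reach both $\mathrm{N}$ and $\mathrm{S}$, i.e.\ to be a complete $\mathrm{N}$--$\mathrm{S}$ cut. Consequently, the only joint shapes that can occur at an interior point of a vertical segment are $\tl$ and $\tr$. Additionally avoiding $\tr$ leaves $\tl$ as the sole possibility: every horizontal segment $s$ can only meet a vertical segment through its \emph{right} endpoint, so the \emph{left} endpoint of $s$ must lie on the boundary of $R$, and (since $s$ is horizontal) it must lie on $\mathrm{W}$.

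Next I would extract the geometric picture. Since horizontal and vertical segments cannot cross in a generic rectangulation, a horizontal segment emanating from $\mathrm{W}$ either reaches $\mathrm{E}$ (if there are no vertical cuts) or terminates at the first vertical cut encountered. In either case, every horizontal segment lies entirely within the leftmost vertical strip and is itself a complete horizontal cut of that strip. Writing $k-1$ for the number of vertical cuts (so $k\ge 1$) and $h$ for the number of horizontal segments (so $h\ge 0$), the leftmost strip is partitioned into $h+1$ rectangles and each of the remaining $k-1$ strips is a single rectangle, giving
\[
n \;=\; (h+1) + (k-1) \;=\; h+k.
\]

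Finally, for each admissible pair $(h,k)$ with $h\ge 0$, $k\ge 1$, $h+k=n$, the above description specifies a unique $\mathcal{R}\in R_n(\td,\tu,\tr)$ (there is no combinatorial freedom left: the positions of the cuts are determined up to strong equivalence). The admissible pairs are parametrised by $h\in\{0,1,\dots,n-1\}$, yielding exactly $n$ rectangulations, which proves $|R_n(\td,\tu,\tr)|=n$. The only step that requires care is the joint analysis in the first paragraph; once that is in place, everything else is a direct counting of the two parameters $h$ and $k$.
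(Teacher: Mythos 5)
Your proof is correct and follows essentially the same route as the paper: both characterize $R_n(\td,\tu,\tr)$ as rectangulations consisting of vertical cuts together with horizontal segments confined to the left-most strip, and then count by the single free parameter. Your version merely spells out the joint analysis (vertical segments reach $\mathrm{N}$ and $\mathrm{S}$, horizontal segments start on $\mathrm{W}$) that the paper leaves implicit.
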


\noindent
\begin{minipage}{0.75\textwidth}
\setlength{\parindent}{1.5em}

\begin{proof}A rectangulation in $R_n(\td,\tu,\tr)$ can be obtained by drawing $k$ vertical segments $(0\le k \le n-1)$ to obtain $k+1$ rectangles, and then adding $n-k-1$ horizontal segments in the left-most rectangle. 
Such a rectangulation is uniquely determined by the choice of $k$, hence $|R_n(\td,\tu,\tr)|=n$. 
\end{proof}

Under the bijection  $\tau$, the corresponding inversion sequences 
have the shape $(0, 0,\dots,0,n-k,n-k+1,\dots,n-1)$,
where $k$ is the number of vertical segments.
\end{minipage}
\hspace{18pt}
\begin{minipage}{0.2\textwidth}
\includegraphics[scale = 0.8]{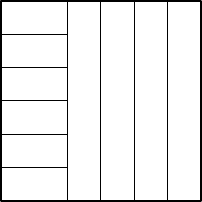}
\end{minipage}

\begin{obs}
For $n \geq 2$ we have $|R_n(\td,\tu,\tr,\tl)| = 2$.
\end{obs}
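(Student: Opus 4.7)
The plan is to argue that forbidding all four T-joint types forces every segment to be a cut, after which only two configurations remain.

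First I would observe that the four patterns $\td,\tu,\tr,\tl$ exhaust all possible joint shapes between two distinct segments in a generic rectangulation (recall from Section~\ref{sec:def_rect} that the only joint types are $\td,\tu,\tr,\tl$, since $\crs$ is excluded by genericity). Hence a rectangulation $\R\in R_n(\td,\tu,\tr,\tl)$ contains no joint at all: no two segments of $\R$ share a point.

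Next I would analyse the consequences of this fact for a single segment $s$ of $\R$. Since $\R$ is a valid rectangulation, every endpoint of $s$ must lie either on the boundary of $R$ or on another segment; the latter case is excluded, so both endpoints of $s$ lie on the boundary. Together with $s$ being horizontal or vertical, this forces $s$ to be a cut of $R$ (a segment joining two opposite sides). Moreover, since no two segments can meet, no horizontal cut can coexist with a vertical cut (they would cross). Therefore either all $n-1$ segments of $\R$ are horizontal cuts or all are vertical cuts.

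Finally, I would note that in either case the rectangulation is determined uniquely up to weak and strong equivalence: $n$ rectangles stacked vertically, or $n$ rectangles placed side by side horizontally. This gives exactly two rectangulations in $R_n(\td,\tu,\tr,\tl)$ for every $n\geq 2$, as required. The argument is entirely structural, so I expect no real obstacle; the only subtle point is citing the genericity convention to rule out $\crs$ joints and thereby justify that $\{\td,\tu,\tr,\tl\}$ covers all possible segment meetings.
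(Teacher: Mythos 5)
Your proposal is correct and follows essentially the same route as the paper's own (very terse) proof: the paper simply asserts that such a rectangulation has either all vertical or all horizontal segments, which is exactly the conclusion you reach after spelling out the intermediate steps (no joints of any kind, hence every segment is a cut, hence all segments share one orientation). Your additional justification — that the four patterns exhaust all generic joint types and that interior segment endpoints must lie on perpendicular segments — is sound and fills in details the paper leaves implicit.
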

\begin{proof}
A rectangulation in $R_n(\td,\tu,\tr,\tl)$ has either all vertical or all horizontal segments. Thus, there are just two such rectangulations of size $n$.
\end{proof}

Under the bijection $\tau$, the corresponding inversion sequences 
are $(0, 0, 0, \ldots, 0)$ and $(0, 1, 2, \ldots, n-1)$.

\section{Concluding remarks}
This paper is our first report on our study of patterns in rectangulations.
In the following parts we thoroughly explore connections between rectangulation 
patterns and permutation patterns, and provide a catalog of avoidance classes determined by
\textit{small patterns} which consist of two or three segments,
which leads to interesting phenomena and new surprising links.
We hope that this work will serve as a foundation for more general 
study of pattern avoidance in rectangulations.

Returning to our proof of the conjecture by Yan and Lin, we remark that
it is was not least the visual nature of rectangulations that helped us to 
find the statistics which elucidated a link between two classes
of inversion sequences which is somewhat hidden in their structure,
and even not easily seen in the plots. This is an additional motivation for the study of rectangulations.

\hypersetup{urlcolor=blue}

\acknowledgements
\label{sec:ack}
We would like to express our thanks to Torsten Mütze, Namrata, and Aaron Williams
for fruitful conversations and for sharing their ideas and experimental results.
Our discussions took place, in particular,
during the \href{https://2023.permutationpatterns.com/}{\textit{Permutation Patterns 2023}} conference (3--7 July 2023, Dijon, France) and the research workshop \href{http://tmuetze.de/workshop24.html}{\textit{Combinatorics, Algorithms, and Geometry}} (4--8 March 2024, Dresden, Germany). We thank the organizers of both events. 

\nocite{*}
\bibliographystyle{plain}
\bibliography{AP}
\label{sec:biblio}

\end{document}